\numberwithin{equation}{section}
\newif\ifhinting\hintingtrue 
\newtheorem {theorem}    {Theorem}[section]
\newtheorem {lemma}      [theorem]    {Lemma}
\newtheorem {corollary}  [theorem]    {Corollary}
\newtheorem {proposition}[theorem]    {Proposition}
\theoremstyle{definition}
\newtheorem {definition} [theorem]    {Definition}
\newtheorem {example}    [theorem]    {Example}
\newtheorem {remark}    [theorem]    {Remark}
\newcounter{AbcT}
\numberwithin{equation}{section}
\newcommand{\IGNORE}[1]{}
\renewcommand{\liminf}{\varliminf}
\renewcommand{\limsup}{\varlimsup}
\DeclareMathOperator{\Span}{Span}
 \DeclareMathOperator{\SL}{SL}
\DeclareMathOperator{\GL}{GL}
\newcommand{\defeq}{\stackrel{\text{def}}{=}}
\newcommand{\diag}{\operatorname{diag}}
\begin{document}
\title{Inhomogeneous Khintchine-Groshev type theorems on manifolds over function fields}

\begin{abstract} The goal of this paper is to establish a complete Khintchine-Groshev type theorem in both homogeneous and inhomogeneous settings, on analytic nondegenerate manifolds over a local field of positive characteristic. The dual form of Diophantine approximation has been considered here. The core of our argument is a sharp estimate for flows on homogeneous spaces, which had been originally invented by Kleinbock and Margulis in \cite{KM}. Our treatise provides the function field analogs of the various earlier results of this type, studied in the Euclidean and $S$-adic framework, by Bernik, Kleinbock and Margulis \cite{BKM}, Beresnevich, Bernik, Kleinbock and Margulis \cite{BBKM}, Badziahin, Beresnevich and Velani \cite{BBV}, Mohammadi and Golsefidy \cite{MG1, MG2} and Datta and Ghosh \cite{DG1}.

\end{abstract}
\subjclass[2020]{11J61, 11J83, 11K60, 37D40, 37A17, 22E40} \keywords{
Diophantine approximation in positive characteristic, Khintchine type theorem, Dynamical systems, Homogeneous flows}

\author{Sourav Das}

\author{Arijit Ganguly}
 
\address{Department of Mathematics and Statistics, Indian Institute of Technology Kanpur, Kanpur, 208016, India}
\email{iamsouravdas1@gmail.com, arijit.ganguly1@gmail.com}
\maketitle
\section{Introduction}\label{section:intro}

Diophantine approximation is the quantitative study of the density of rational numbers inside real numbers, and its higher dimensional analogs. To begin with, we recall the classical Khintchine-Groshev theorem. Let  $\psi: \mathbb{Z}_{\geq 0} \rightarrow \mathbb{R}_+$ and $n\in \mathbb{N}$.
We say that $\mathbf{x}\in \mathbb{R}^n$ is \emph{$\psi$-approximable} if for infinitely many $\mathbf{q}\in\mathbb{Z}^n \setminus \{\mathbf{0}\}$, one has the following:
\[|\mathbf{x}\cdot \mathbf{q}+p|<\psi(\|\mathbf{q}\|^n), \text{ for some }p\in \mathbb{Z},\] where $\|\mathbf{q}\|$ stands for the supremum norm of $\mathbf{q}$. In the special case $\psi(t)=\frac{1}{t^{1+\varepsilon}}$, where $\varepsilon>0$, we call $\mathbf{x}$ a \emph{very well approximable} vector. Denote the set of $\psi$-approximable vectors  by $\mathcal{W}_n(\psi)$. The classical Khintchine-Groshev theorem says:
\[\mathcal{W}_n(\psi)=\begin{cases}
\text{Lebesgue null}  & \text{if } \displaystyle \sum_{\mathbf{q}\in \mathbb{Z}^n\setminus \{\mathbf{0}\}} \psi(\|\mathbf{q}\|^n)<\infty\\
\text{Lebesgue full}   & \text{if }\displaystyle \sum_{\mathbf{q}\in \mathbb{Z}^n\setminus \{\mathbf{0}\}}\psi(\|\mathbf{q}\|^n)=\infty, \text{ provided } \psi \text{ is decreasing.}
\end{cases}\]  Note that the monotonicity assumption in the divergence case is absolutely necessary when $n=1$, but it can be dropped for $n\geq 2$. In fact, the above theorem can be generalized to $m\times n$ matrices as well, for any $m,n\in \mathbb{N}$, and if $nm>1$, it is no longer necessary that $\psi$ is decreasing to establish the divergence case (see \cite[Theorem 1]{BV2}). \\

The central question of ``Diophantine approximation on manifolds" is to which extent an embedded submanifold $\mathcal{M}$ of $\mathbb{R}^n$
inherits the diophantine properties that are prevalent in $\mathbb{R}^n$. For instance, given a submanifold $\mathcal{M}\subseteq \mathbb{R}^n$, one can inquire about the very well approximable vectors lying in it or the validity of Khintchine type theorems etc. 
The situation turns out to be more delicate here as almost all the classical techniques seem to fail. Not only that, it is also imperative to take correct topological, geometric, analytic etc. assumptions with regard to $\mathcal{M}$ into account. This branch originates with the following conjecture of K. Mahler in 1932:\\

\emph{For Lebesgue almost every $x\in \mathbb{R}$, the vector $(x,x^2,\dots,x^n)$ is not very well approximable.}\\

\noindent Later in 1960's, while resolving the conjecture of Mahler, V. Sprind\v{z}uk extended the conjecture to any nondegenerate analytic submanifold of $\mathbb{R}^n$. An input from Baker  further strengthens the same, which we usually refer to as the `Baker-Sprind\v{z}uk conjecture' in literature. \\

This long-standing conjecture has been proved in the landmark work \cite{KM} by D.Y. Kleinbock and G.A. Margulis. The game changer is the machinery of the homogeneous dynamics that the authors have deployed to attack the conjecture. Their strategy can be put in brief as follows. The authors have shown that the property of a point of $\mathbb{R}^n$ being very well approximable can be translated to a particular cusp excursion behavior of points in the  space of
unimodular lattices  under
the diagonal flow. In view of this, proving the conjecture thus amounts to showing such orbits are rare. Then to accomplish that, they proved a very strong quantitative estimate on nondivergence of trajectories refining all such estimates that used to exist before in the literature.   \\

The `Quantitative nondivergence' estimate proved in \cite{KM} comes out to be very influential in the study of metrical aspects of Diophantine approximation from then onwards. The subsequent works \cite{K}, \cite{BKM}, and \cite{KT} extend and generalize the same to different directions and contexts so as to yield more interesting number theoretic phenomena. Indeed, with an appropriate modification to that of \cite{KM}, V. Bernik, D.Y. Kleinbock and G. A. Margulis proved the convergence case of Khintchine's theorem on nondegenerate submanifolds of Euclidean spaces in \cite{BKM}. This has further been made effective in \cite{Ad} by  F. Adiceam et al.  The divergence case of the same has been proved by V. Beresnevich, V. Bernik, D.Y. Kleinbock, and G.A. Margulis in \cite{BBKM}.  A complete inhomogeneous version of the Khintchine-Groshev theorem for nondegenerate submanifolds of $\mathbb{R}^n$  has been established by D. Badziahin, V. Beresnevich and S. Velani in \cite{BBV}.\\


Sooner rather than later, Diophantine approximation has also been explored in the context of other local fields (ultrametric) leading to the natural analogs of some of the results mentioned above. \vspace{0.1 cm} 
\begin{itemize}
    \item[\ding{43}] \textbf{$p$-adic case}: V. Sprind\v{z}uk established the $p$-adic version of Mahler's conjecture in \cite{Spr2}. Modifying the techniques of \cite{KM}, Kleinbock and Tomanov proved the $S$-adic version of Baker-Sprind\v{z}uk conjecture in \cite{KT}. The convergence case of $S$-adic Khintchine theorem for non-degenerate analytic manifolds was proved by A. Mohammadi and A. Salehi Golsefidy in \cite{MG1}. They also proved the divergence case of the same for $\mathbb{Q}_p$ in \cite{MG2}. Inhomogeneous $S$-adic convergence and divergence Khintchine type theorems  has been proved by S. Datta and A. Ghosh in \cite{DG1}. For various other important results in the context of $S$-adic Diophantine approximation see \cite{DG2,DG3,DR,DaG}. \vspace{0.2cm}
    \item[\ding{43}] \textbf{Over function fields}: Diophantine approximation over function fields deals with the quantitative aspects of the approximation of a Laurent series by rational functions and its higher dimensional analogs. The theory of Diophantine approximation over local fields of positive characteristic has many interesting parallels with that of the classical Euclidean scenario. The classical theory of regular continued fractions carries over to fields of formal power series. We refer the reader to \cite{L1,L2, deM, schmidt,  Fuch1} for a variety of results in that. K. Mahler developed the geometry of numbers in this context in \cite{M}. Using this one can easily prove the analog of Dirichlet's theorem. An elementary proof of the same, in fact the general multiplicative version, can be found in \cite{GG}. See also \cite[Appendix]{BGM} for a general form of Dirichlet’s theorem for finite extension of function fields. \\ 
    
     The Khintchine theorem in the positive characteristic setup was first proved by B. deMathan in \cite{deM}. This has been extended subsequently to systems of linear forms by S. Kristensen in \cite{Kr1},  and further examination of the exceptional set, in terms of Hausdorff dimension, has been carried out. See also \cite{AGP} for the multiplicative version of Khitchine-Groshev theorem (in this setup) and \cite{GR} for an analog of Khitchine-Groshev theorem for imaginary quadratic extensions of function fields. Chao Ma and Wei-Yi Su proved the inhomogeneos version of Khinchinte theorem and also of Jarn\'{i}k–Besicovitch theorem \cite{Chao-Su}. In a later work, viz. \cite{Kr2}, S. Kristensen proved certain asymptotic formulae for the number of solutions to systems of inhomogeneous linear Diophantine inequalities from which inhomogeneous versions of Khintchine-Groshev and Jarn\'{i}k can be derived. The reader is referred to \cite{KN, Fuch2} for further developments in the inhomogeneous theory in positive characteristic.\\
    
   We now turn our attention to the metric theory over manifolds in this function field setting. The analogs of the conjectures of Mahler and Baker-Sprind\v{z}uk had been proved by V. Sprind\v{z}uk (\cite{Spr1}) and A. Ghosh (\cite{G-pos}) respectively. Furthermore, in the spirit of \cite{KLW}, the extremality of a large class of natural measures, namely the \emph{friendly} one, has been discussed in the very recent paper \cite{DaG2} by the authors. In \cite{G-JNT}, the second named author has proved the convergence case of Khintchine's theorem for affine hyperplanes along with a quantitative version of the same. There had been some advancement in the corresponding inhomogeneous theory as well, for instance, the inhomogeneous Baker-Sprind\v{z}uk conjecture has been proved by the second named author and A. Ghosh in \cite{GG-contemp}, and using the ``Inhomogeneous transference principle" of Beresnevich and Velani  (see \cite{BV, BBV}) it has been proved in \cite{DaG2} that \emph{contracting} extremal measures on the space of matrices are indeed inhomogeneously extremal. An upper bound for the inhomogeneous Diophantine exponent in the non-extremal case has also been provided in \cite{DaG2}. \\
   
 We would also like to take this occasion to make the reader aware that despite many results holding in both Euclidean and positive characteristic realms together, there are a few striking contrasts in place as well, such as the theory of badly approximable vectors are quite different, there is no analog of Roth's theorem, and much to one's surprise, the rational functions are precisely all elements for which the Dirichlet's theorem can be improved (\cite[Theorem 2.4]{GG1}), to name but a few.  

\end{itemize}\vspace{0.2 cm}

To the best of author's knowledge, Khintchine type theorems have not been explored much on manifolds over function fields, except for hyperplanes in \cite{G-JNT}. The paper under consideration settles the complete analog of Khintchine-Groshev law in the generality of inhomogeneous framework for analytic \emph{nonplanar} (equivalently, nondegenerate) submanifolds. These extend the earlier results of \cite{MG1,MG2,BKM,BBKM,BBV,DG1} to positive characteristic. The quantitative nondivergence estimate and the inhomogeneous transference principle are the main ingredients to prove the convergence case. The divergence case is proved using techniques from another source \cite{BBV,BDV}, namely \emph{ubiquitous systems}. It is noteworthy to mention that the divergence case has been proved for general Hausdorff measures (see Theorem \ref{thm:inhomo-div}).

 

\section{The setup and main results}\label{subsection:mainres}
We begin with  the function 
field $\mathbb{F}_{q}(X)$, where $q\defeq p^b,$ $p$ is a prime and $b\in \mathbb{N}$. One can define  a nontrivial, non-archimedean, and discrete absolute value  $|\cdot|$
on $\mathbb{F}_{q}(X)$ as follows: 
\[ |0|\defeq 0\,\,  \text{ and} \,\, \left|\frac{P}{Q}\right|\defeq q^{\displaystyle \deg P- \deg Q}
\text{ \,\,\,for all nonzero } P, Q\in \mathbb{F}_{q}[X]\,.\] 
The completion field of $\mathbb{F}_{q}(X)$ is $\mathbb{F}_{q}((X^{-1}))$, i.e., the field of Laurent series in $X^{-1}$ over the finite field $\mathbb{F}_q$. The absolute value of $\mathbb{F}_{q}((X^{-1}))$, which we again denote by $|\cdot |$, is given as follows. 
Let $a \in \mathbb{F}_{q}((X^{-1}))$. For $a=0$, 
define $|a|=0$. If $a \neq 0$, then we can write 
$$a=\displaystyle \sum_{k\leq k_{0}} a_k X^{k},\,\,\mbox{where}\,\,\,\,k_0 \in \mathbb{Z},\,a_k\in \mathbb{F}_{q}\,\,\mbox{and}\,\, a_{k_0}
\neq 0\,. $$

\noindent We define $k_0$ as the \textit{degree} of $a$, which will be denoted by $\deg a$,  and $|a|\defeq q^{\deg a}$. This clearly extends the absolute value $|\cdot|$ of $\mathbb{F}_{q}(X)$ to $\mathbb{F}_{q}((X^{-1}))$ and moreover, 
the extension remains non-archimedian and discrete. This makes $\mathbb{F}_{q}((X^{-1}))$ a complete and separable metric space which is ultrametric and hence totally disconnected. It is worth mentioning that any local field of positive characteristic happens to be isomorphic to some $\mathbb{F}_{q}((X^{-1}))$ (\cite[Part I, Chapter I, Theorem 8]{W}). \\

Let $\Lambda$ and $F$ 
denote $\mathbb{F}_{q}[X]$ and $\mathbb{F}_{q}((X^{-1}))$ respectively from now on. For any $n\in \mathbb{N}$, $F^n$ is throughout assumed to be equipped 
with the supremum norm which is defined as follows
\[||\mathbf{x}||\defeq \displaystyle \max_{1\leq i\leq n} |x_i|\text{ \,\,for all \,} \mathbf{x}=(x_1,\dots,x_n)\in F^{n}\,,\]

\noindent and with the topology induced by this norm. Clearly $\Lambda^n$ is discrete in $F^n$. Since the topology on $F^n$ considered here 
is the usual product topology on $F^n$, it follows that  $F^n$ is locally compact as $F$ is locally compact. As a custom, we denote the Haar measure on $F^n$ which takes the value 1 on the closed unit ball $ \{ \mathbf{x} \in F^n: \|\mathbf{x}\| \leq 1\} $ by $|\cdot |$,  although this is an abuse of notation. Let $\mathscr{O}=\mathbb{F}_{q}[[X^{-1}]]$ denotes the ring of all formal power series in $X^{-1}.$ Note that $\mathscr{O}=\{x \in \mathbb{F}_{q}((X^{-1})) : |x| \leq 1\}.$ We will use these notations throughout this article. It easy to check that for any $t\in \mathbb{N}$
\begin{equation}
    \# \{ \mathbf{a} \in \Lambda^n : \|\mathbf{a}\|=q^t\}= q^{(t+1)n}-q^{tn}=q^{tn}(q^n-1).
\end{equation}

 Let $ \Psi : \Lambda^n \rightarrow \mathbb{R}_+ $  be a function satisfying
\begin{equation}\label{equ:psi}
\Psi (a_1,\dots,a_i,\dots,a_n) \geq \Psi(b_1,\dots,b_i,\dots,b_n) \,\, \text{if} \,\, |a_i| \leq |b_i| \,\, \text{for all} \,\, i=1,\dots,n. 
\end{equation}
Such a function is called a \textit{multivariable approximating function}. Given a multivariable approximating function $\Psi$ and a function $\theta : F^n \rightarrow F$,  we say that a vector $ \mathbf{y} \in F^n $ is $(\Psi,\theta)$-approximable if there exist infinitely many $(\mathbf{a},a_0) \in \Lambda^n \setminus \{0\} \times \Lambda $ such that 
$$\left| \mathbf{y} \cdot \mathbf{a} + a_0 + \theta(\mathbf{y})\right| < \Psi(\mathbf{a}).$$
The set of all $(\Psi,\theta)$-approximable vectors of $F^n$ is denoted by $\mathcal W_n(\Psi,\theta).$ When $\theta\equiv 0$, we are brought to the homogeneous situation where simply call such vectors $\Psi$ approximable, and the set of all such vectors is denoted by $\mathcal W_n(\Psi)$. \\

Before going to our main results, we recall some notation, terminologies, and assumptions from \cite{MG1,MG2,BBV,DG1}. We will always take the domain $U$ to be an open box in $F^d$ with respect to the sup norm on $F^d$ and furthermore, we assume the following:
\begin{enumerate}[label=({{\Roman*}})]
       \item\label{II} We take analytic maps $f=(f_1,\dots,f_n): U \subseteq F^d \rightarrow F^n,$ which  can be analytically extended to the boundary of $U$. In view of the implicit function theorem for analytic manifolds over function fields (see chapter $2$ of \cite{WHS}), without loss in generality we may assume that $f_1(\mathbf{x})=x_1$
    \item\label{III} Let the restrictions of $1,f_1,\dots,f_n$ to any open subset of $U$ are linearly independent over $F.$
    
    \item\label{V} We always assume $\Psi : \Lambda^n \rightarrow \mathbb R_+$ to be a \emph{multivariable approximating function}. 
    \suspend{enumerate}\vspace{0.1 cm}
Without loss in generality, for the sake of simplification, we further make some boundedness assumptions: \vspace{0.1 cm}
   \resume{enumerate}[{[label=({{\Roman*}})]}]
    \item\label{IV} $\|f(\mathbf{x})\| \leq 1,$ $\|\nabla f(\mathbf{x})\| \leq 1,$ and $|\bar\Phi_{\beta}f(\mathbf{y_1},\mathbf{y_2},\mathbf{y_3})|\leq 1$ for any second difference quotient $\Phi_{\beta}$ and $\mathbf{x},\mathbf{y_1},\mathbf{y_2},\mathbf{y_3} \in U$ (see \S \ref{section:calpha} for the definitions).
    \item\label{VI} $\theta : U \subseteq F^d \rightarrow F$ is analytic and can be extended analytically to the boundary of $U.$ We also assume $|\theta(\mathbf{x})| \leq 1,$ $\|\nabla \theta(\mathbf{x})\| \leq 1$ and $|{\bar\Phi_{\beta}}\theta(\mathbf{y_1},\mathbf{y_2},\mathbf{y_3})|\leq 1$ for any second difference quotient $\Phi_{\beta}$ and $\mathbf{x},\mathbf{y_1},\mathbf{y_2},\mathbf{y_3} \in U.$ 
    \end{enumerate}
\subsection{Main results}
For a subset $X$ of $F^d$ and $s>0,$ $\mathscr{H}^s(X)$  denotes  the $s$-dimensional Hausdorff measure of  $X$. Now we are ready to state the main results of this article. 


\begin{theorem}\label{thm:inhomo-conv} Suppose $U$ is an open subset of $F^d$, $f: U  \rightarrow F^n$ satisfies \ref{II}, \ref{III}, \ref{IV} and $\theta : U \rightarrow F$ satisfies \ref{VI}.  Let $\Psi$ be a multivariable approximating function and     
$$\mathcal W_f(\Psi,\theta) \defeq \{\mathbf{x} \in U : f(\mathbf{x}) \,\, is \,\, (\Psi,\theta)-approximable \}. $$ Then $\mathcal W_f(\Psi,\theta)$
has measure zero if  $\displaystyle \sum_{\mathbf{a} \in \Lambda^n  \setminus \{0\} }  \Psi (\mathbf{a}) < \infty.$
\end{theorem}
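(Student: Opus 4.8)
The plan is to deduce Theorem \ref{thm:inhomo-conv} (the inhomogeneous convergence case) from Theorem \ref{thm:conv} (the homogeneous one) by means of the inhomogeneous transference principle of Beresnevich--Velani, as referenced in the introduction via \cite{BV}. Concretely, one sets up a dual pair of limsup sets, one homogeneous and one inhomogeneous, indexed by the same family of resonant ``hyperplanes'' $H_{(\mathbf{a},a_0)} = \{\mathbf{y}\in F^n : \mathbf{y}\cdot\mathbf{a}+a_0 = 0\}$, and checks that the transference axioms hold in this function-field setting. Since here we only need the convergence (null) direction, the required input is the easier half: the homogeneous limsup set is null under the summability hypothesis $\sum_{\mathbf{a}}\Psi(\mathbf{a})<\infty$, and one transfers nullity to the inhomogeneous limsup set. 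So the first step is to recall the statement of the transference principle and identify its hypotheses (the ``intersection property'' and the ``contraction property'' of the relevant balls, together with the key measure-theoretic ``ubiquity/null'' conditions) in the present language.

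Second, I would push the problem down to the manifold. The set $\mathcal W_f(\Psi,\theta)$ is the preimage under $f$ of the inhomogeneous limsup set on $F^n$; more precisely, $\mathbf{x}\in\mathcal W_f(\Psi,\theta)$ iff $f(\mathbf{x})$ lies within distance $\Psi(\mathbf{a})$ (suitably interpreted, using the componentwise-monotone structure of $\Psi$ and the ultrametric) of infinitely many of the affine resonant sets, shifted by $\theta(\mathbf{x})$. The analytic nondegeneracy assumptions (\ref{II}), (\ref{III}), (\ref{IV}) on $f$ — which give us, via \cite[Theorem 5.2]{G-JNT}, the quantitative nondivergence estimate and hence the conclusion of Theorem \ref{thm:conv} — are exactly what one feeds into the homogeneous side of the transference machine. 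The hypotheses (\ref{VI}) on $\theta$ (analyticity, boundedness of $\theta$, $\nabla\theta$, and second difference quotients) are there to guarantee that the shifted system $f(\cdot)\cdot\mathbf{a}+a_0+\theta(\cdot)$ still enjoys the $(C,\alpha)$-good property and the nondegeneracy/curvature bounds needed to apply the homogeneous argument uniformly in the shift; so a technical step is to verify that the family $\{\,\mathbf{y}\mapsto f(\mathbf{y})\cdot\mathbf{a}+a_0+\theta(\mathbf{y})\,\}$ satisfies the same structural hypotheses, with constants independent of $(\mathbf{a},a_0)$, as the unshifted family does. This is essentially the observation that adding an analytic function with controlled derivatives and second difference quotients preserves all the relevant estimates.

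Third, with the transference principle in hand, the logical skeleton is: (a) the homogeneous limsup set $\mathcal W_f(\Psi)$ is null by Theorem \ref{thm:conv}; (b) the transference axioms hold for the dual pair; (c) therefore the inhomogeneous limsup set $\mathcal W_f(\Psi,\theta)$ is null. One must be slightly careful about the exact form of the approximating inequality used in the transference framework versus the one in the definition of $(\Psi,\theta)$-approximability, but the componentwise monotonicity (\ref{V}) of $\Psi$ and a standard dyadic decomposition over the ``levels'' $\|\mathbf{a}\| = q^t$ (using the count $\#\{\mathbf{a}\in\Lambda^n : \|\mathbf{a}\| = q^t\} = q^{tn}(q^n-1)$) reconcile the two: summability of $\sum_{\mathbf{a}}\Psi(\mathbf{a})$ is equivalent to summability of the level-wise quantities, which is what the transference principle consumes.

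I expect the main obstacle to be the faithful adaptation of the Beresnevich--Velani inhomogeneous transference principle to the positive-characteristic, $d$-dimensional manifold setting — in particular, verifying that the geometry of the ultrametric space $F^d$ (total disconnectedness, the exact ball structure, Haar measure normalization) is compatible with the abstract axioms, and checking the uniformity of constants in the shifted $(C,\alpha)$-good estimates. The analytic/number-theoretic content is already carried by Theorem \ref{thm:conv} and by \cite[Theorem 5.2]{G-JNT}; the real work here is organizational and measure-theoretic, ensuring the transference scheme applies verbatim once one has set up the right dual pair of limsup sets and confirmed that assumption (\ref{VI}) on $\theta$ gives nothing worse than assumption (\ref{IV}) on $f$.
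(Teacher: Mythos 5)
Your overall instinct --- to invoke the Beresnevich--Velani inhomogeneous transference principle from \cite{BV} and feed the homogeneous estimate into it --- is the right one, and is indeed what the paper does. But the logical skeleton ``(a) $\mathcal W_f(\Psi)$ is null by Theorem \ref{thm:conv}, (b) transference axioms hold, (c) therefore $\mathcal W_f(\Psi,\theta)$ is null'' has two related gaps that would make the argument fail as stated.

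First, you skip the large/small gradient decomposition, which is structurally essential and not merely bookkeeping. The paper splits $\mathcal W_f(\Psi,\theta)$ into $\mathcal W_f^{\text{large}}(\Psi,\theta)$ and $\mathcal W_f^{\text{small}}(\Psi,\theta)$ according to whether $\|\nabla(f(\mathbf{x})\cdot\mathbf{a}+\theta(\mathbf{x}))\|$ exceeds $\|\mathbf{a}\|^{1-\varepsilon}$. The large-gradient piece is handled \emph{directly} by Borel--Cantelli, via Corollary \ref{cor:big_grad_inhomo} (which is exactly your observation that $(f_1,\dots,f_n,\theta)$ satisfies the same structural bounds as $f$, so Theorem \ref{thm:big-grad} applies with $\mathbf{a}$ augmented by the coefficient $1$ on $\theta$). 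Only the small-gradient piece goes through the transference machine, and crucially the gradient bound is built into the definition of the sets $H_{\mathbf t}(\alpha,\lambda)$ and $I_{\mathbf t}(\alpha,\lambda)$.

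Second --- and this is the real obstruction --- Theorem \ref{thm:conv} is not enough input for the homogeneous side. The transference principle (Theorem \ref{thm:transf}) requires $|\Lambda_H(\phi)|=0$ for \emph{every} $\phi$ in the class $\Phi$, and the contracting property forces $\Phi$ to contain genuinely growing functions: the paper takes $\phi_\delta(\mathbf t)=q^{\delta|\mathbf t|}$ with $0<\delta<\varepsilon/2$, and the contracting property is verified using $\phi_\delta^+=\phi_{\delta/2+\varepsilon/4}\ge\phi_\delta$. The sets $\Lambda_H(\phi_\delta)$ are therefore inflated by the factor $q^{\delta|\mathbf t|}$; without the small-gradient restriction the corresponding approximating function fails the summability hypothesis of Theorem \ref{thm:conv} (the relevant series over $\mathbf t$ diverges), so the homogeneous limsup set would not be expected to be null and Theorem \ref{thm:conv} certainly cannot be cited. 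What makes $\Lambda_H(\phi_\delta)$ null is precisely the quantitative small-gradient estimate of Theorem \ref{thm:small-grad}, applied $\mathbf t$-by-$\mathbf t$ and summed; this is the step you collapse into ``cite Theorem \ref{thm:conv}''. So the proof cannot be organized as a direct transfer from the qualitative homogeneous statement; it must be organized around the two quantitative measure bounds (Theorems \ref{thm:big-grad} and \ref{thm:small-grad}), with the transference principle applied only to the small-gradient component.

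A smaller point: the transference limsup sets $H_{\mathbf t}$ and $I_{\mathbf t}$ live in the parameter domain $U\subseteq F^d$, not in $F^n$; the ``resonant sets'' appear implicitly through the level sets of $a_0+f(\mathbf x)\cdot\mathbf a(+\theta(\mathbf x))$, and the $(C,\alpha)$-good estimates (Corollary \ref{coro:c-alpha-grad} and Proposition \ref{prop:non_zero_c_alpha}) are the technical inputs to the contracting property --- these are exactly the ``uniformity in the shift'' checks you flag, and your intuition about how assumption (\ref{VI}) enters is correct, but it only becomes a proof once coupled to the gradient decomposition.
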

 The divergence counterpart of the Khintchine-Groshev theorem is proved in the generality of the Hausdorff measure: 
 
\begin{theorem}\label{thm:inhomo-div} 
 Suppose $U$ is an open subset of $F^d$, $f: U  \rightarrow F^n$ satisfies \ref{II}, \ref{III}, \ref{IV} and $\theta : U \rightarrow F$ satisfies \ref{VI}. Let $\Psi(\mathbf{a}) \defeq \psi(\|\mathbf{a}\|),$ where $\psi$ is a decreasing positive function. Then for $s>d-1$, we have $$\mathscr{H}^s(\mathcal W_f(\Psi,\theta) \cap U)=\mathscr{H}^s(U) \,\,\,\, \text{if} \,\, \displaystyle \sum_{\mathbf{a} \in \Lambda^n  \setminus \{0\} }  \|\mathbf{a}\| \left(\frac{\Psi (\mathbf{a})}{\|\mathbf{a}\|}\right)^{s+1-d} = \infty. $$In particular, when $s=d$, $\mathcal W_f(\Psi,\theta) $ has full measure if $\displaystyle \sum_{\mathbf{a} \in \Lambda^n  \setminus \{0\} } \Psi (\mathbf{a}) = \infty $.
\end{theorem}
Now combining Theorem \ref{thm:inhomo-conv} and \ref{thm:inhomo-div}, one obtains the following corollary.
\begin{corollary}[Complete Khintchine-Groshev theorem for manifolds in positive characteristic]
     Suppose $U$ is an open subset of $F^d$ and $f: U  \rightarrow F^n$ satisfies \ref{II}, \ref{III}, and \ref{IV}. Let $\Psi(\mathbf{a}) \defeq \psi(\|\mathbf{a}\|)$, where $\psi$ is a decreasing positive function. Then
 \[\mathcal W_f(\Psi)=\begin{cases}
\text{null}  & \text{ if }\displaystyle \sum_{\mathbf{a} \in \Lambda^n  \setminus \{0\} }  \Psi (\mathbf{a}) < \infty\\
\text{co-null}& \text{ if }\,\,\displaystyle \sum_{\mathbf{a} \in \Lambda^n  \setminus \{0\} }  \Psi (\mathbf{a}) = \infty.
\end{cases}\]
\end{corollary}

\begin{example}$\ $
    \begin{enumerate}[label=({{\arabic*}})]
        \item\label{exam:1} Let $U=B(0;1/q^4) \subset F$ be the ball of radius $1/q^{4}$ containing the point $0,$ $f_0:U\rightarrow F^2$ be given by $f_0(x)=(x,x^2)$ and $\theta_0: U \rightarrow F$ is defined by $\theta_0(x)=x^4$. First of all note that $f_0,\theta_0$ satisfies the hypothesis of Theorem \ref{thm:inhomo-conv} and \ref{thm:inhomo-div}. Let $\Psi_1, \Psi_2 : \Lambda^2 \rightarrow \mathbb R_+$ be given by $\Psi_1(\|\mathbf{a}\|)=\frac{1}{\|\mathbf{a}\|}$ and $\Psi_2(\|\mathbf{a}\|)=\frac{1}{\|\mathbf{a}\|^2}.$ Then by applying Theorem \ref{thm:inhomo-conv} and \ref{thm:inhomo-div}, we get that $\mathcal W_{f_0}(\Psi_1,\theta_0)$ is a co-null set and $\mathcal W_{f_0}(\Psi_2,\theta_0)$ is a null set.\vspace{0.2 cm}
         \item More generally, consider $U=B(0;1) \subset F$ to be the ball of radius $1$ containing the point $0$ and $f_1:U \rightarrow F^n$ be given by $f_1(x)=(p_1(x),\dots,p_n(x)),$ where $p_1,\dots,p_n$ are polynomials in $x$ such that they satisfies the hypothesis of Theorem \ref{thm:inhomo-conv} and \ref{thm:inhomo-div}. Note that if the polynomials $p_1,\dots,p_n$ satisfy \ref{II} and \ref{III}, then by rescaling and normalizing $p_i$'s, we can make $f_1(x)=(p_1(x),\dots,p_n(x))$ satisfy \ref{IV}. Similarly if we take $\theta_1 : U \rightarrow F$ to be $\theta_1(x)=p(x)$ for some polynomial $p,$ then we can also make $\theta_1$ to satisfy \ref{VI}. The upshot is that $f_1,\theta_1$ satisfies the hypothesis of Theorem \ref{thm:inhomo-conv} and \ref{thm:inhomo-div}. Let $\Psi_1$ and $\Psi_2$ be as defined in example \ref{exam:1}. Then by Theorem \ref{thm:inhomo-conv} and \ref{thm:inhomo-div}, we have $\mathcal W_{f_1}(\Psi_1,\theta_1)$ is a co-null set and $\mathcal W_{f_1}(\Psi_2,\theta_1)$ is a null set.
    \end{enumerate}
\end{example}
\begin{remark}$\ $
\begin{enumerate}[label=({{\arabic*}})]
    \item The proof of Theorem \ref{thm:inhomo-div} can be further modified as it deems necessary to deal with the more general case of multivariable approximating functions satisfying a convexity condition, which has been introduced and referred to as the property $\mathbf{P}$ in \cite[\S 1]{BBV}. Let   $\mathbf{v}=(v_1,\dots,v_n)$ be an $n$-tuple of positive numbers with $v_1+\dots+v_n=n$. We set
        $\|\mathbf{x}\|_{\mathbf{v}} \defeq \max_{1 \leq i \leq n} |x_i|^{1/v_i}$. The function  $\|\cdot\|_{\mathbf{v}}$  thus defined is called the $\mathbf{v}$-quasinorm on $F^n$.  This clearly generalises the supremum norm $\|\mathbf{x}\|$, for  $\|\mathbf{x}\|_{\mathbf{v}}=\|\mathbf{x}\|$,  when $\mathbf{v}=(1,\dots,1)$. 
    We say that a multivariable approximating function $\Psi$ satisfies property $\mathbf{P}$ if $\Psi(\mathbf{a})=\psi(\|\mathbf{a}\|_{\mathbf{v}})$, for some approximating function $\psi$ (i.e., $\psi$ is decreasing) and $\mathbf{v}$ as above. Any multivariable approximating function $\mathbf{a} \mapsto \psi(\|\mathbf{a}\|)$ obviously satisfies property $\mathbf{P}$. We leave it to the interested reader to make appropriate modifications of the arguments provided in \S \ref{section:divg} to establish the following general statement:\\
    
    \emph{Let $U, f, s, \theta$ be as in the hypothesis of Theorem \ref{thm:inhomo-div}. Assume that $\Psi$ is a multivariable approximating function that satisfies property $\mathbf{P}$. Then one has 
    \[ \mathscr{H}^s(\mathcal W_f(\Psi,\theta) \cap U)=\mathscr{H}^s(U) \text{ if } \,\, \displaystyle \sum_{\mathbf{a} \in \Lambda^n  \setminus \{0\} } \|\mathbf{a}\| \left(\frac{\Psi (\mathbf{a})}{\|\mathbf{a}\|}\right)^{s+1-d} = \infty.\]  }
    
    \item  It seems to be an interesting direction to explore whether or not a Convergence Khintchine-Groshev type theorem can be settled for a more general class of measures, for example, friendly measures or under some more modest assumptions, etc. To the best of our knowledge, it remains unanswered to date. 
\end{enumerate}
\end{remark}
\subsection{Strategy and outline of the proofs}\label{stra}
To prove Theorem \ref{thm:inhomo-conv}, first note that
\begin{equation*}
    \mathcal W_f(\Psi,\theta)= \displaystyle \limsup_{\mathbf{a} \rightarrow \infty} \mathcal W_f(\mathbf{a},\Psi,\theta),
\end{equation*}
where for $\mathbf{a} \in \Lambda^n,$
\begin{equation*}
    \mathcal W_f(\mathbf{a},\Psi,\theta)\defeq \{ \mathbf{x} \in U : |a_0 + \mathbf{a} \cdot f(\mathbf{x})+ \theta(\mathbf{x})| < \Psi(\mathbf{a}) \,\, \text{for some } \,\, a_0 \in \Lambda \}.
\end{equation*}
In the next step, we decompose the set $\mathcal W_f(\mathbf{a},\Psi,\theta)$ into two subsets $\mathcal W_f^{\text{big}}(\mathbf{a},\Psi,\theta)$ and $\mathcal W_f^{\text{small}}(\mathbf{a},\Psi,\theta)$ (given by (\ref{equ:w_large_f_a}) and (\ref{equ:w_small_f_a}) respectively) based on the size of the gradient $\nabla(\mathbf{a} \cdot f(\mathbf{x})+ \theta(\mathbf{x}))$. Hence we can express $\mathcal W_f(\Psi,\theta)$ as the union of two lim-sup sets $\mathcal W_f^{\text{big}}(\Psi,\theta)$ and $\mathcal W_f^{\text{small}}(\Psi,\theta)$.  Now we separately show that these two sets have measure zero. To this end, we first look at the homogeneous counterpart of these sets. The corresponding homogeneous `big' gradient part is easy to estimate using calculus on ultrametric spaces and the analogous estimate for the `small' gradient part is proved using the famous `quantitative non-divergence' technique. Next, from the homogeneous big gradient estimate, with the help of the Borel-Cantelli lemma, we show that $|\mathcal W_f^{\text{big}}(\Psi,\theta)|=0$. Finally combining the inhomogeneous transference principle and the estimate obtained previously for homogeneous small gradient one shows $\mathcal W_f^{\text{small}}(\Psi,\theta)$ has zero measure. This completes the proof of the convergence case. \\

To prove the divergence case, i.e., Theorem \ref{thm:inhomo-div}, we use the technique of ubiquitous systems. In fact, the generality of this technique yields the divergence counterpart for general Hausdorff measures. 
\subsection{A brief summary of the following sections} In \S \ref{section:calpha}, we recall some definitions and basic facts from ultrametric calculus, and after that few of the basic properties of $(C,\alpha)$-good functions. We prepare ourselves intending to establish Theorem \ref{thm:inhomo-conv} and \ref{thm:inhomo-div} in \S \ref{preparation}. Here we prove some technical results, concerning $(C,\alpha)$-good functions and skew gradient of certain types of functions. \S \ref{section:estimate_big_grad} and \S \ref{section:estimate_small_grad} deals with estimating the measure of `big' gradient (Theorem \ref{thm:big-grad}) and `small' gradient sets (Theorem \ref{thm:small-grad}), which are key ingredients for proving Theorem \ref{thm:inhomo-conv}. The objective of \S \ref{section:inhomoconv} is to settle the convergence case of the inhomogeneous version of Khintchine's theorem (Theorem \ref{thm:inhomo-conv}).  The ``inhomogeneous transference principle" introduced in \cite[\S 5]{BV} has been instrumental in this endeavor. \S \ref{section:divg}, the final section, provides the proof of the divergence case of inhomogeneous Khintchine's theorem (Theorem \ref{thm:inhomo-div}) using ubiquitous systems.

\subsection*{Acknowledgement}  The authors are deeply grateful to the Department of Mathematics and Statistics at the Indian Institute of Technology Kanpur for providing a friendly environment conducive to research. The authors would also like to thank the anonymous referees for the careful reading of the paper, detailed reports, and numerous valuable suggestions on its presentation, which have significantly improved the clarity and quality of the exposition.

\section{Ultametric calculus and $ (C,\alpha) $-good functions}\label{section:calpha}
 \subsection{Calculus of functions on local fields} Now we recall, following \cite{WHS}, the notion of $C^k$ functions in the ultrametric case. We consider the local field ${F}$. Let $U$ be an open subset of ${F}$ and $g: U \rightarrow {F}$ be a function.  Let 
 $$\nabla^k U \defeq \{(x_1,\dots,x_k) \in U^k \,\, |\,\, x_i \neq x_j \,\, for \,\, i \neq j\},$$
 and define the $k$-th order difference quotient $\Phi^k g: \nabla^{k+1} U \rightarrow {F}$ of $g$ inductively by $\Phi^0 g=g$ and \\
 $$\Phi^k g(x_1,x_2,\dots,x_{k+1})\defeq\frac{\Phi^{k-1}g(x_1,x_3,\dots,x_{k+1})-\Phi^{k-1}g(x_2,x_3,\dots,x_{k+1})}{x_1 - x_2}.$$\\
 It is clear that $\Phi^k g$ is a symmetric function of its $k+1$ variables. Now $g$ is called $C^k$ at $a \in U$ if the following limit exists:
 $$\lim_{(x_1,\dots,x_{k+1}) \rightarrow (a,\dots,a)} \Phi^k g(x_1,\dots,x_{k+1}),$$ 
 and $g$ is called $C^k$ on $U$ if it is $C^k$ on every point $a \in U.$ This is equivalent to $\Phi^k g$ being extendable to $\bar\Phi^k g : U^{k+1} \rightarrow {F}.$ This extension, if it exists, is indeed unique. The $C^k$ functions are $k$ times differentiable, and 
 $$g^{(k)}(x)=k! \bar\Phi^k g(x,\dots,x).$$

 Let now $g$ be an $ F$-valued function of several variables defined on $U_1 \times \dots \times U_d,$ where each $U_i$ is an open subset of $ F.$ Denote by $\Phi_i^k g$ the $k^{th}$ order difference quotient of $g$ with respect to the $i^{th}$ coordinate. Then for any multi-index $\beta =(i_1,\dots,i_d)$ let
 $$\Phi_{\beta}g \defeq \Phi_1^{i_1} \circ \dots \circ \Phi_d^{i_d} g.$$
 It is defined on $\nabla^{i_1 +1}U_1 \times \dots \times \nabla^{i_d +1} U_d$. The function $g$ is called $C^k$ on $U_1 \times \dots \times U_d$ if for any multi-index $\beta$ with $|\beta|=\sum_{j=1}^{d} i_j$ at most $k,$ $\Phi_{\beta}g$ is extendable to a continuous function $\bar\Phi_{\beta}g: U_1^{i_1 +1} \times \dots \times U_d^{i_d +1} \rightarrow F.$ Similar to the one-variable scenario, we can show that partial derivatives $\partial_{\beta}g \defeq \partial_{1}^{i_1} \circ \dots \circ \partial_{d}^{i_d}$ of a $C^k$ function $g$ exists and are continuous as long as $|\beta | \leq k.$ Moreover, we have
 \begin{equation}\label{equ:part_beta_g}
     \partial_{\beta}g(x_1,\dots,x_d)= \beta! \bar \Phi_{\beta} g(x_1,\dots,x_1,\dots,x_d,\dots,x_d),
 \end{equation}
 where $\beta!=\prod_{j=1}^{d}i_j !$ and each of the variables $x_j$ in the right-hand side of \eqref{equ:part_beta_g} is repeated $i_j+1$ times.

 If $g=(g_1,\dots,g_n):F^d \rightarrow F^n$ is a $C^1$ map, then by $\nabla g(\mathbf{x})$ we denote the $d \times n$ matrix whose $(i,j)$ th entry is $\partial_j g_i(\mathbf{x})$ and $\|\nabla g(\cdot)\|$ denotes the max norm.\\
  
 \noindent \textbf{Notation:} Throughout this article, for $\mathbf{x} \in F^d$ and $r>0,$ $B(\mathbf{x};r)$ stands for the open ball $\{\mathbf{y} \in F^d:\|\mathbf{y}-\mathbf{x}\|<r\}$ in $F^d$ of radius $r$ containing the point $\mathbf{x}$. As $F^d$ is ultrametric,  every point of an open ball can be regarded as centre of the same. Indeed, a ball is uniquely determined by any point in it and the radius. Hence we refrain from referring to centre of a ball. If $B=B(\mathbf{x};r)$ and $\tau>0$ be given, by $\tau B$ we mean the ball $B(\mathbf{x};\tau r)$. Also note that any two balls in $F^d$, (or more generally in an ultrametric space) are either disjoint or one contains the other. 
\subsection{$(C,\alpha)$-good functions and their properties}
In this section, we recall some terminology  
introduced in the papers of Kleinbock and Margulis, and Kleinbock, Lindenstrauss, and Weiss and used in several subsequent works by many 
authors.



 Suppose  $U\subseteq F^d$ is open, $\nu$ is a Radon measure
on $F^d$, and $f: F^d \longrightarrow {F}$ is a given function such that $|f|$ is measurable. For any $B\subseteq F^d$, we set
$$ ||f||_{\nu,B} \defeq \displaystyle \sup_{x\in B\cap \text{ supp }(\nu)} |f(x)|.$$

\begin{definition}\label{defn:C,alpha}
 For $C,\alpha >0$, $f$ is said to be $(C,\alpha)$-\emph{good} on $U$ with respect to $\nu$ if for every ball $B\subseteq U$ containing some point of $\text{supp }(\nu)$, one has
 \[\nu(\{x\in B: |f(x)| < \varepsilon\})\leq C\left(\frac{\varepsilon}{||f||_{\nu,B}}\right)^{\alpha} \nu(B)\,.\]
\end{definition}
The following properties are immediate from Definition \ref{defn:C,alpha}. 
\begin{lemma} \label{lem:C,alpha} Let $U,\nu, f, C,\alpha$ be as given above. Then one has 
\begin{enumerate}[label=({{\roman*}})]
   \item $f$ \text{ is } $(C,\alpha)$-good \text{ on  }$U$ with respect to $\nu \Longleftrightarrow \text{ so is } |f|$.
   \item $f$ is $(C,\alpha)$-good on $U$  with respect to $\nu$ $\Longrightarrow$ so is $c f$ for all $c \in {F}$.
   \item \label{item:sup} Let $I$ be an index set and for each $i \in I$, $f_i : U \rightarrow F$ be functions such that $|f_i|$ is measurable. For each $ i\in I, f_i$ are $(C,\alpha)$-good on $U$ with respect to $\nu$  $\Longrightarrow$ so is $\sup_{i\in I} |f_i|$.
   \item $f$ is $(C,\alpha)$-good on $U$ with respect to $\nu$ and $g :U \longrightarrow \mathbb{R}$ is a continuous function such 
   that $c_1\leq |\frac{f}{g}|\leq c_2$ 
 for some $c_1,c_2 > 0\Longrightarrow g$ is $(C(\frac{c_2}{c_1})^{\alpha},\alpha)$-good on $U$ with respect to $\nu$.
 \item Let $C_2 >1$ and 
 $\alpha _2>0$. $f$ is $(C_1,\alpha_1)$-good on $U$ with respect to $\nu$, $C_1 \leq C_2$ and $ \alpha_2 \leq \alpha_1 \Longrightarrow f$ is
 $(C_2,\alpha_2)$-good on $U$ with respect to $\nu$.
  \end{enumerate}
\end{lemma}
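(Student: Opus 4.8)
The plan is to deduce each of (i)--(v) directly from Definition \ref{defn:C,alpha}: in every case the defining inequality for the new function is obtained from that of $f$ by an elementary substitution in the parameter $\varepsilon$, combined with a comparison of the relevant sup-norms $\|\cdot\|_{\nu,B}$. Throughout one fixes a ball $B\subseteq U$ with center in $\supp(\nu)$, and one may assume $0<\|f\|_{\nu,B}<\infty$, since otherwise the right-hand sides of the inequalities are $+\infty$ and there is nothing to prove.

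First, (i) is immediate: both the sublevel set $\{x\in B:|f(x)|<\varepsilon\}$ and the quantity $\|f\|_{\nu,B}$ depend on $f$ only through $|f|$, so the two conditions coincide verbatim. For (ii), the case $c=0$ is vacuous; for $c\neq 0$ one has $\{x\in B:|cf(x)|<\varepsilon\}=\{x\in B:|f(x)|<\varepsilon/|c|\}$ and $\|cf\|_{\nu,B}=|c|\,\|f\|_{\nu,B}$, so applying the $(C,\alpha)$-good bound for $f$ at level $\varepsilon/|c|$ produces exactly the bound for $cf$ at level $\varepsilon$. For (iii), writing $g=\sup_{i\in I}|f_i|$, one has $\{x\in B:g(x)<\varepsilon\}=\bigcap_{i\in I}\{x\in B:|f_i(x)|<\varepsilon\}$, hence $\nu(\{x\in B:g(x)<\varepsilon\})\le\inf_{i}\nu(\{x\in B:|f_i(x)|<\varepsilon\})\le C\,\nu(B)\inf_i(\varepsilon/\|f_i\|_{\nu,B})^\alpha=C\,\nu(B)\bigl(\varepsilon/\sup_i\|f_i\|_{\nu,B}\bigr)^\alpha$, and $\|g\|_{\nu,B}=\sup_i\|f_i\|_{\nu,B}$ by definition of the supremum; measurability of $g$ is part of the hypothesis.

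For (iv), the bounds $c_1\le|f(x)|/|g(x)|\le c_2$ give $|f(x)|/c_2\le|g(x)|\le|f(x)|/c_1$ pointwise, so $\{x\in B:|g(x)|<\varepsilon\}\subseteq\{x\in B:|f(x)|<c_2\varepsilon\}$ and, taking suprema over $B\cap\supp(\nu)$, $\|f\|_{\nu,B}\ge c_1\|g\|_{\nu,B}$; substituting the level $c_2\varepsilon$ into the $(C,\alpha)$-good bound for $f$ and using the latter inequality yields the bound $C(c_2/c_1)^\alpha\bigl(\varepsilon/\|g\|_{\nu,B}\bigr)^\alpha\nu(B)$. Finally, for (v), set $r=\|f\|_{\nu,B}$ and split on the size of $\varepsilon$: if $\varepsilon\le r$ then $\varepsilon/r\le 1$, so $\alpha_2\le\alpha_1$ gives $(\varepsilon/r)^{\alpha_1}\le(\varepsilon/r)^{\alpha_2}$, and combined with $C_1\le C_2$ the $(C_1,\alpha_1)$-bound upgrades to the $(C_2,\alpha_2)$-bound; if $\varepsilon>r$, then $|f(x)|<\varepsilon$ for every $x\in B\cap\supp(\nu)$, so $\nu(\{x\in B:|f(x)|<\varepsilon\})=\nu(B)\le C_2\,\nu(B)\le C_2(\varepsilon/r)^{\alpha_2}\nu(B)$, using $C_2>1$ and $\varepsilon/r>1$.

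There is no substantial obstacle here; the only points requiring a moment's care are (iv), where one must keep track of which of $c_1,c_2$ controls the sup-norm rather than the sublevel set, and the case $\varepsilon>\|f\|_{\nu,B}$ in (v), which is precisely why the hypotheses $C_2>1$ and the fact that $\nu$ is carried by $\supp(\nu)$ are needed.
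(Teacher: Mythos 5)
Your proof is correct: the paper simply asserts that the five properties are immediate from the definition and omits any argument, and what you have written is exactly the straightforward verification that the paper has in mind, substituting the appropriate level into the $(C,\alpha)$ inequality and comparing sup-norms in each case. The only mildly delicate points — tracking which of $c_1,c_2$ controls the sublevel set versus $\|\cdot\|_{\nu,B}$ in (iv), and the case $\varepsilon>\|f\|_{\nu,B}$ in (v) where one uses $\nu(B\setminus\supp\nu)=0$ and $C_2>1$ — are handled correctly.
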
  
 We say a  map $\mathbf{f}=(f_1,f_2,...,f_n)$ from $U$ to ${F}^n$, where $n\in \mathbb{N}$,
is $(C,\alpha)$-good on $U$ with respect to $\nu$, or simply $(\mathbf{f},\nu)$ is 
$(C,\alpha)$-good on $U$,  if every ${F}$-linear combination of $1,f_1,...,f_n$ 
is $(C,\alpha)$-good on $U$ with respect to $\nu$. \\

The following is a particular case of \cite[Lemma 3.4]{KT}:
\begin{lemma}\label{lem:poly-C,alpha}
Any polynomial $f \in F[x_1,\dots,x_m]$ of degree not greater than $k$ is $(C,1/mk)$-good on $F^m$ with respect to natural Haar measure of $F^m,$ where $C$ is a constant depending only on $m$ and $k.$
\end{lemma}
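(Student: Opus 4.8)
The plan is to mimic the classical Euclidean argument of Kleinbock--Margulis (as recorded in \cite{KM}, \cite{KT}) but carefully transported to the ultrametric setting $\mathcal{F} = F$. First I would reduce to the one-variable case: a polynomial $f \in F[x_1,\dots,x_m]$ of degree at most $k$ is, along any line parallel to a coordinate axis, a polynomial of degree at most $k$ in one variable. In the ultrametric world the notion of ``ball'' is especially rigid — in $F^m$ with the sup norm every ball is a product of one-dimensional balls, and any two balls are either nested or disjoint (the constant $N_{F^m}=1$ noted after Definition~\ref{defn:besicovitch}) — so a Fubini-type slicing argument reduces the $m$-variable estimate to $m$ applications of the one-variable estimate, with the exponent $1/(mk)$ arising as the $m$-th power of the one-variable exponent $1/k$ and the constant accumulating multiplicatively. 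This is the standard ``$(C,\alpha)$-good in each variable $\Rightarrow$ $(C^m, \alpha/m)$-good jointly'' mechanism; I would state and use it as a small lemma, valid over any Besicovitch space that fibers nicely, and then it suffices to prove: a degree-$\le k$ polynomial in one variable over $F$ is $(C_0, 1/k)$-good on $F$ with respect to Haar measure, with $C_0 = C_0(k)$.

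For the one-variable statement, let $g(x) = c\prod_{i=1}^{r}(x - \lambda_i)$ with $r \le k$ and $\lambda_i$ in an algebraic closure (or, to stay inside $F$, argue directly without factoring — see below). Fix a ball $B = B(x_0, \rho)$ and set $M = \|g\|_{B} = \sup_{x\in B}|g(x)|$. On $B$ one has $|x - \lambda_i| \le \max(|x_0 - \lambda_i|, \rho) =: R_i$, and moreover, since the absolute value is non-archimedean and discrete, $|x-\lambda_i|$ takes a constant value equal to $R_i$ on all but a single sub-ball of radius $\rho$ when $R_i > \rho$, and ranges over $[0,\rho]$ otherwise. Hence $M = |c|\prod_i R_i$ up to controlled constants. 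The set $\{x \in B : |g(x)| < \varepsilon\}$ forces $|x - \lambda_i|$ to be small for the relevant indices; because the valuations are discrete one gets that this set is contained in a union of at most $r$ balls, the sum of whose radii is $\lesssim \rho \,(\varepsilon/M)^{1/r}$, so its measure is $\lesssim (\varepsilon/M)^{1/r}\,|B| \le (\varepsilon/M)^{1/k}\,|B|$ after enlarging the exponent as in Lemma~\ref{lem:C,alpha}(v). To avoid leaving $F$ when $g$ is not split, I would instead run the argument through the resultant/discriminant: there is a classical bound (again in \cite{KT}, \S 3, or deducible from properties of the Newton polygon) showing that for a polynomial of degree $\le k$, the measure of $\{|g| < \varepsilon\}$ in a ball is controlled by $(\varepsilon/\|g\|_B)^{1/k}$ with a constant depending only on $k$; the ultrametric Newton-polygon description of the zeros of $g$ over $\bar F$ makes this completely explicit.

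The main obstacle — and the only place where positive characteristic could in principle bite — is confirming that the non-archimedean estimate on sublevel sets of a one-variable polynomial really does go through with a constant depending only on $k$ (not on $q$, not on the polynomial). I expect this to be fine: the relevant input is purely about the distribution of roots via the Newton polygon and the discreteness of the value group, both of which behave uniformly. A secondary point to be careful about is that $\|g\|_{\nu,B}$ in Definition~\ref{defn:C,alpha} is a supremum over $B \cap \supp\nu$, but here $\nu$ is Haar measure on $F^m$ whose support is everything, so $\|g\|_{\nu,B} = \sup_{x\in B}|g(x)|$ and there is no discrepancy; one should also check that $|g|$ is measurable (clear, as $g$ is continuous) so that Lemma~\ref{lem:C,alpha}(iii) applies when one takes suprema of finitely many linear combinations later. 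Assembling these pieces — one-variable sublevel estimate, multiplicative slicing over the $m$ coordinate directions, and the exponent/constant bookkeeping from Lemma~\ref{lem:C,alpha} — yields the claimed $(C, 1/(mk))$-goodness on $F^m$.
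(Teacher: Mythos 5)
The paper states Lemma~\ref{lem:poly-C,alpha} without proof; it is imported from the literature, in particular the local-field version established in \cite{KT} (and used again in \cite{MG1}), so there is no in-paper argument to compare against. Your sketch reproduces the standard route from those sources --- a one-variable sublevel-set estimate giving $(C_0,1/k)$-goodness of degree-$\leq k$ polynomials on $F$, followed by iterated coordinate slicing to reach $(C,1/(mk))$-goodness on $F^m$ --- and the ultrametric points you flag (product structure of sup-norm balls, $N_{F^m}=1$, the support of Haar measure being all of $F^m$) are the right checks. Two small corrections to the bookkeeping: the exponent $1/(mk)$ arises by \emph{dividing} $1/k$ by $m$, not raising it to the $m$-th power (your parenthetical already has the correct $\alpha/m$), and the slicing step does not merely multiply the constant; one splits each fiber integral into a region where the inductive bound applies and a thin region where the restricted sup-norm is small, optimizes a threshold $\delta$, and obtains a new constant depending on $C$, $m$, $\alpha$ but not literally $C^m$. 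Your one-variable estimate via factoring over $\bar F$ and pigeonholing on $\min_i|x-\lambda_i|$ is also essentially right, though producing a uniform lower bound on $\|g\|_B$ when several roots cluster inside $B$ (possibly in a ramified extension) is exactly the spot where the cited references resort to an interpolation or well-separated-points argument rather than naive root localization. None of these is a genuine gap; the plan is the correct one, with the hard ultrametric details deferred to the references you name.
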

\begin{note}
 Using a standard argument \cite[Lemma 1.4]{PS}, it is easy to show that the ultrametric space $F^d,$  has the following nice covering property. Any covering by balls of any bounded subset of $F^d$ admits a countable subcover consisting of pairwise mutually disjoint balls.
\end{note}

\section{Preparation for the proof of Theorem \ref{thm:inhomo-conv} and \ref{thm:inhomo-div}}\label{preparation}
In this section, we prove a few technical results providing sufficient conditions for certain functions to be $(C,\alpha)$-good, which will be useful in the proof of Theorem \ref{thm:inhomo-conv} and \ref{thm:inhomo-div}. Let us first recall the following Theorem from \cite[Theorem 3.2]{KT}, which is a useful criterion for checking certain functions to be $(C,\alpha)$-good.
\begin{theorem}\label{thm:C_alpha_criteria}
    Let $V_1,\dots,V_d$ be non-empty open subsets of $F.$ Let $k \in \mathbb N$, $c_1,\dots,c_d>0$ and $f \in C^k(V_1 \times \dots \times V_d)$ be such that
    \begin{equation}\label{equ:C_alpha_criteria}
        |\Phi_{j}^{k}f| \equiv c_j \,\, \text{on}\,\, \nabla^{k+1}V_j \times \prod_{i\neq j}V_i, \,\, j=1,\dots,d.
    \end{equation}
    Then $f$ is $(dk^{3-1/k},1/dk)$-good on $V_1 \times \dots \times V_d$.
\end{theorem}
\noindent Now we prove the  following function field  analogue of \cite[Proposition 1]{BBV} and \cite[Proposition 5.1]{DG1}:
\begin{proposition}\label{prop:grad_C}
Let $U \subseteq F^d$ be  open, $\mathbf{x}_0 \in U $ and let $\mathcal{H} \subseteq C^l(U)$ be a compact family of functions $h:U \rightarrow F$ for some $l \geq 2.$  Also suppose that 
\begin{equation}\label{equ:h_inf_max_greater_0}
    \inf_{h \in \mathcal{H}} \left(\max_{0< |\beta| \leq l} |\partial_{\beta} h(\mathbf{x}_0)|\right) > 0.
\end{equation}
Then there exists a neighbourhood $V \subseteq U$ of $\mathbf{x}_0$ and constants $C,\delta>0$ satisfying the following property. For any $\theta \in C^l(U)$ such that 
\begin{equation}\label{equ:theta_sup_max_less_delta}
    \sup_{\mathbf{x} \in U}\left( \max_{0< |\beta| \leq l} |\partial_{\beta} \theta(\mathbf{x})| \right)\leq \delta
\end{equation}
and any $h \in \mathcal H$ we have the following
\begin{enumerate}[label=({{\roman*}})]
    \item $h + \theta$ is $(C,\frac{1}{dl})$-good on $V,$ 
    \item $\|\nabla (h + \theta )\|$ is $(C,\frac{1}{d(l-1)})$-good on $V.$
\end{enumerate}
\end{proposition} 
\begin{proof}
    Following the approach of \cite[Proposition 1]{BBV} and \cite[Proposition 5.1]{DG1}, we give a brief sketch of the proof. In view of \eqref{equ:h_inf_max_greater_0}, there exists a constant $c_1>0$ such that for any $h \in \mathcal{H}$ one can find a multiindex $\beta$ with $0 < |\beta|=k \leq l,$ where $k=k(h),$ such that
\begin{equation}\label{equ:partial_beta_h_greater_c_1}
|\partial_{\beta} h(\mathbf{x}_0)| \geq c_1.
\end{equation}
Since there are finitely many $\beta,$ without loss in generality we may assume that $\beta$ appearing in \eqref{equ:partial_beta_h_greater_c_1} is the same for all $h \in \mathcal H.$ We want to apply Theorem \ref{thm:C_alpha_criteria} to prove our results. To do so we first show that $h+\theta$ satisfies the hypothesis of Theorem \ref{thm:C_alpha_criteria}. Now we aim to show that there exists $A \in \GL_d(\mathscr O)$ such that $(h+ \theta) \circ A$ satisfies \eqref{equ:C_alpha_criteria}. It is not very difficult to see that for $h \in \mathcal H$ there exists $A_h \in \GL_d(\mathscr O)$ and $c>0$ such that
\begin{equation}\label{equ:min_partial_h_A_h}
    \min_{i=1,\dots,d} |\partial_i^{k}(h \circ A_h )(A_h^{-1}(\mathbf{x}_0))| >c.
\end{equation}
Consider the map $\eta : \GL_d(F) \times C^l(U) \times U \rightarrow F $ defined by
\begin{equation*}
    \eta(A,h,\mathbf{x})=\min_{i=1,\dots,d} |\partial_i^{k}(h \circ A) (A^{-1}(\mathbf{x}))|.
\end{equation*}
 Using \eqref{equ:min_partial_h_A_h} and the continuity of $\eta,$ we get an open neighbourhood $U_{A_h} \times U_h \times U_{(\mathbf{x}_0,h)}$  of $(A_h,h,\mathbf{x}_0)$ such that 
 \begin{equation*}
    \eta(A,f,\mathbf{x})>c \,\, \,\, \text{for all} \,\, (A,f,\mathbf{x}) \in U_{A_h} \times U_h \times U_{(\mathbf{x}_0,h)}.
 \end{equation*}
In particular, 
\begin{equation}\label{equ:eta_A_f_x_greater_c}
\eta(A_h,f,\mathbf{x})>c \,\, \,\, \text{for all} \,\, f \in U_{h} \,\, \text{and} \,\, \mathbf{x} \in U_{(\mathbf{x}_0,h)}.
\end{equation}
$\{U_h\}_{h \in \mathcal H}$ is an open cover of $\mathcal{H},$ hence it must have a finite subcover $\{U_{h_i}\}_{i=1}^{r}.$ By \eqref{equ:eta_A_f_x_greater_c} we have that for every $\mathbf{x} \in U_{\mathbf{x}_0}=\cap_{i=1}^{r} U_{(\mathbf{x}_0,h_i)}$ and $h \in \mathcal H,$ there exists $i \in \{1,\dots,r\}$ such that
\begin{equation}
\eta(A_{h_i},h,\mathbf{x})>c.
\end{equation}
By \eqref{equ:theta_sup_max_less_delta}, there exists a constant $c_0>0$ such that 
\begin{equation}
|\partial_i^k (\theta \circ A)(A^{-1}\mathbf x)| \leq c_0\delta 
\end{equation}
for some $A \in \GL_d(\mathscr O).$ Thus for any $\theta$ satisfying \eqref{equ:theta_sup_max_less_delta}, we have
\begin{equation}
    \eta(A_{h_i},h+\theta,\mathbf x) >c' \,\, \forall \,\, \mathbf{x} \in U_{\mathbf{x}_0},
\end{equation}
    for some constant $c'>0.$ Also by the compactness of the family $\mathcal H$ and \eqref{equ:theta_sup_max_less_delta} there is an uniform upper bound for every $h \in \mathcal H$ and $\theta$ of the above type. Now by applying Theorem \ref{thm:C_alpha_criteria}, we get that $(h+\theta) \circ A_{h_i}$ is $(dk^{3-1/k},1/dk)$-good on $A_{h_i}^{-1}U_{\mathbf x_0}.$ Hence $h+\theta $ is $(dk^{3-1/k},1/dk)$-good on $U_{\mathbf x_0}.$ This completes the proof of part $(i).$

    For the second part consider the compact sets $\mathcal{H}_{A_{h_i}}=\{h \in \mathcal H:\eta(A_{h_i},h,\mathbf{x}_0) \geq c\}.$ Hence $\{\partial_j(h \circ A_{h_i}):h \in \mathcal{H}_{A_{h_i}} \}$ is a compact family of functions, being a continuous image of a compact set. Without loss in generality we can take the same $A$ for each $h \in \mathcal H,$ since $\mathcal H \subseteq \cup_{i=1}^{r} \mathcal H_{A_{h_i}}.$ Suppose $|\beta|>1$ in \eqref{equ:partial_beta_h_greater_c_1}. Now we apply part $(i)$ to the compact family of functions $\{\partial_j(h\circ A):h \in \mathcal H \}$ of $C^{(l-1)}(U).$ Clearly this family satisfies \eqref{equ:h_inf_max_greater_0}. Hence by part $(i),$ $\partial_j((h+\theta) \circ A)$ is $(C,\frac{1}{d(l-1)})$-good on an open neighbourhood of $A^{-1}(\mathbf x_0),$ for each $j=1,\dots,d.$ Therefore for each $j=1,\dots,d$, $\partial_j(h + \theta)$ is $(C,\frac{1}{d(l-1)})$-good on an open neighbourhood of $\mathbf{x}_0$ and so in $\|\nabla(h +\theta)\|,$ by using Lemma \ref{lem:C,alpha}. The case $|\beta|=1$ in \eqref{equ:partial_beta_h_greater_c_1} is trivial. This completes the proof of part $(ii).$
\end{proof}
 
\begin{corollary}\label{coro:c-alpha-grad}
Let $U \subseteq F^d$ be  open, $\mathbf{x}_0 \in U$ be fixed and assume that $\mathbf h=(h_1,h_2,\dots,h_n) : U \rightarrow F^n$ satisfies \ref{II}, \ref{III} and \ref{IV} and that $\theta$ satisfies \ref{VI}. Then there exists a neighbourhood $V \subseteq U$ of $\mathbf{x}_0$ and  constants $C,M_0 >0$ and $l \in \mathbb N$ such that for any $(\mathbf{a},a_0) \in F^{n+1}$ with $\|\mathbf{a}\| \geq M_0$
\begin{enumerate}[label=({{\arabic*}})]
    \item $a_0 + \mathbf{a} \cdot \mathbf h + \theta $ is $(C,\frac{1}{dl})$-good on $V,$ and
    \item $\|\nabla (\mathbf{a} \cdot \mathbf h + \theta)\| $ is $(C,\frac{1}{d(l-1)})$-good on $V.$
\end{enumerate}
\end{corollary}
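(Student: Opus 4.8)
The plan is to derive this corollary from Proposition \ref{prop:grad_C} by packaging the finitely many functions $1, f_1, \dots, f_n$ into a single compact family and absorbing the linear parameters $(\mathbf a, a_0)$ by a normalization argument. First I would fix $\mathbf x_0 \in U$ and, using assumption (\ref{III}), observe that on a small enough neighbourhood the functions $1, f_1, \dots, f_n$ together have a nonvanishing jet of some order: more precisely, there exists $l \ge 2$ (in fact the nondegeneracy/(\ref{IV}) hypotheses give control at order $l=2$, but I would allow a general $l$) such that
\[
\max_{0 < |\beta| \le l} \,\bigl|\partial_\beta (c_0 + c_1 f_1 + \dots + c_n f_n)(\mathbf x_0)\bigr| > 0
\]
whenever $(c_1, \dots, c_n) \ne 0$. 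The natural object to feed into Proposition \ref{prop:grad_C} is therefore the family
\[
\mathcal F \defeq \Bigl\{\, c_0 + \mathbf c \cdot f \ : \ (\mathbf c, c_0) \in F^{n+1},\ \|\mathbf c\| = 1 \,\Bigr\},
\]
restricted to $U$ (or a slightly shrunk subdomain to which $f$ extends analytically, by (\ref{II})). The unit sphere $\{\|\mathbf c\| = 1\}$ in $F^n$ is compact, and $c_0$ ranges over the compact set $\mathscr O$ after we also use a reduction to $\|a_0\|$ bounded relative to $\|\mathbf a\|$ — I will come back to this point. Granting compactness of $\mathcal F$ in $C^l(U)$, the infimum condition of Proposition \ref{prop:grad_C} holds by the previous display together with continuity and compactness, so the proposition supplies a neighbourhood $V$ and constants $C, \delta > 0$ such that $g + \theta'$ is $(C, \tfrac1{dl})$-good and $\|\nabla(g+\theta')\|$ is $(C, \tfrac1{d(l-1)})$-good on $V$ for every $g \in \mathcal F$ and every $\theta' \in C^l(U)$ with $\sup_U \max_{0<|\beta|\le l}|\partial_\beta \theta'| \le \delta$. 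By (\ref{VI}) the given $\theta$ has all the relevant partials bounded by $1$; rescaling $\theta$ by $\delta$ (or equivalently noting $\tfrac{1}{\delta}\theta$ is an admissible perturbation after scaling $\mathbf a$) lets us apply the proposition.

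The remaining bookkeeping is the homogeneity reduction. Given $(\mathbf a, a_0) \in F^{n+1}$ with $\|\mathbf a\| \ge M_0$, write $t = \|\mathbf a\| \ge 1$ and consider two cases. If $|a_0| \le C' t$ for a suitable constant $C'$ (which is the only case that matters, since otherwise $|a_0 + \mathbf a \cdot f + \theta|$ is bounded below on $V$ by $\sim |a_0|$ using $\|f\|\le 1$, and then the $(C,\alpha)$-good inequality is trivially satisfied for $\varepsilon$ small and handled by enlarging $C$ for $\varepsilon$ large), set $\mathbf c = \mathbf a / t$ and $c_0 = a_0 / t$, so $\|\mathbf c\| = 1$ and $c_0$ lies in a fixed ball. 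Then
\[
a_0 + \mathbf a \cdot f + \theta \;=\; t\Bigl( c_0 + \mathbf c \cdot f + \tfrac1t \theta \Bigr),
\]
and since $t \ge 1$, the perturbation $\tfrac1t\theta$ still satisfies the $\delta$-smallness bound after we first arrange (by the scaling remark above, or by shrinking what "admissible $\theta$" means) that $\theta$ itself is $\delta$-small; multiplying by the scalar $t \in F$ preserves $(C,\alpha)$-goodness by Lemma \ref{lem:C,alpha}(ii). This gives (1) with $l$ as chosen. For (2), note $\nabla(\mathbf a \cdot f + \theta) = t\,\nabla(\mathbf c \cdot f + \tfrac1t\theta)$, so $\|\nabla(\mathbf a \cdot f + \theta)\| = |t|\,\|\nabla(\mathbf c\cdot f + \tfrac1t\theta)\|$, and again Lemma \ref{lem:C,alpha}(ii) together with part (2) of Proposition \ref{prop:grad_C} yields that this is $(C, \tfrac1{d(l-1)})$-good on $V$. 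Taking $M_0$ large enough that all the above estimates (in particular the "trivial" large-$|a_0|$ case and the requirement $t \ge 1$) are in force completes the argument.

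The main obstacle I anticipate is making the family $\mathcal F$ genuinely \emph{compact} in $C^l(U)$ rather than merely bounded: this requires that $f$ (and hence every $c_0 + \mathbf c \cdot f$) extends $C^l$, indeed analytically, to the closure of $U$ — which is exactly what assumption (\ref{II}) is for — and that the parametrizing set $\{\|\mathbf c\|=1\} \times \{|c_0| \le C'\}$ is compact, which holds in $F^{n+1}$ because closed balls there are compact. One must also take a little care that the order $l$ furnished by the nondegeneracy hypothesis is uniform over the compact family; this follows because the map $(\mathbf c, c_0) \mapsto \max_{0<|\beta|\le l}|\partial_\beta(c_0 + \mathbf c\cdot f)(\mathbf x_0)|$ is continuous and strictly positive on the (compact) unit sphere in the $\mathbf c$ variable, hence bounded below. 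Everything else is a routine translation of the real-variable argument of \cite{BBV} into the ultrametric $C^l$ calculus recalled in \S\ref{section:calpha}.
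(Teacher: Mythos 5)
Your proof is correct and is in fact the argument the paper intends: the paper omits the proof of Corollary \ref{coro:c-alpha-grad}, pointing the reader to \cite[Corollary~3]{BBV} and \cite[Corollary~5.1]{DG1}, and your normalization-plus-compactness derivation from Proposition~\ref{prop:grad_C} is precisely that argument transposed to the ultrametric setting. Two small points worth tidying: (i) in the non-archimedean field the large-$|a_0|$ case is cleaner than you state — for $|a_0| > \|\mathbf a\| \ge 1$ the ultrametric equality gives $|a_0 + \mathbf a \cdot f(\mathbf x) + \theta(\mathbf x)| = |a_0|$ identically on $V$ (using $\|f\|\le 1$, $|\theta|\le 1$ from (\ref{IV}), (\ref{VI})), so the function has constant absolute value and is trivially $(C,\alpha)$-good, while in the complementary case $|a_0|\le\|\mathbf a\|$ one gets $c_0\in\mathscr O$, which is the needed compact range; (ii) the remark about ``rescaling $\theta$ by $\delta$'' is a red herring — since $\theta$ has all partials bounded by $1$ by (\ref{VI}), $\theta/t$ automatically has partials bounded by $1/t$, and choosing $M_0 \ge \max(1,1/\delta)$ makes it an admissible perturbation in Proposition~\ref{prop:grad_C} with no further arrangement.
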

\begin{corollary}\label{coro:c-alpha-grad-homo}
Let $U \subseteq F^d$ be  open, $\mathbf{x}_0 \in U$ be fixed and assume that $\mathbf h=(h_1,h_2,\dots,h_n) : U \rightarrow F^n$ satisfies \ref{II}, \ref{III} and \ref{IV}. Then there exists a neighbourhood $V \subseteq U$ of $\mathbf{x}_0,$   constants $C>0$ and $l \in \mathbb N$ such that for any $(\mathbf{a},a_0) \in F^{n+1}$ 
\begin{enumerate}[label=({{\arabic*}})]
    \item $a_0 + \mathbf{a} \cdot \mathbf h  $ is $(C,\frac{1}{dl})$-good on $V,$ and
    \item $\|\nabla (\mathbf{a} \cdot \mathbf h) \| $ is $(C,\frac{1}{d(l-1)})$-good on $V.$
\end{enumerate}
\end{corollary}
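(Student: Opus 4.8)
\textbf{Proof proposal for Corollary \ref{coro:c-alpha-grad-homo}.}

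The plan is to deduce the homogeneous statement directly from Corollary \ref{coro:c-alpha-grad} by specializing $\theta \equiv 0$, after observing that the constraint $\|\mathbf{a}\| \geq M_0$ can be removed in the homogeneous case. First I would note that $\theta \equiv 0$ trivially satisfies condition (\ref{VI}) (indeed the bounds $|\theta| \le 1$, $\|\nabla\theta\| \le 1$, and the second difference quotient bound all hold with value $0$), so Corollary \ref{coro:c-alpha-grad} applies and yields a neighbourhood $V$, constants $C, M_0 > 0$ and $l \in \mathbb{N}$ such that for all $(\mathbf{a}, a_0) \in F^{n+1}$ with $\|\mathbf{a}\| \ge M_0$, both $a_0 + \mathbf{a}\cdot f$ is $(C, \frac{1}{dl})$-good on $V$ and $\|\nabla(\mathbf{a}\cdot f)\|$ is $(C, \frac{1}{d(l-1)})$-good on $V$. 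It remains to handle the finitely-constrained range $\|\mathbf{a}\| < M_0$.

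For that range I would exploit homogeneity: scaling by a nonzero scalar does not affect the $(C,\alpha)$-good property by Lemma \ref{lem:C,alpha}(ii). If $\mathbf{a} = \mathbf{0}$, then $a_0 + \mathbf{a}\cdot f \equiv a_0$ is constant, hence $(C,\alpha)$-good for any $C \geq 1$ (the set where $|a_0| < \varepsilon$ is either empty or all of $B$), and $\|\nabla(\mathbf{a}\cdot f)\| \equiv 0$ is likewise trivially good; here one uses Lemma \ref{lem:C,alpha}(v) to absorb these into the eventual constant. If $\mathbf{a} \neq \mathbf{0}$ but $\|\mathbf{a}\| < M_0$, pick a scalar $\lambda \in F$ with $\|\lambda \mathbf{a}\| \ge M_0$ — possible since $|\cdot|$ takes arbitrarily large values on $F$ — and apply the already-established case to $(\lambda\mathbf{a}, \lambda a_0)$: then $\lambda a_0 + \lambda\mathbf{a}\cdot f = \lambda(a_0 + \mathbf{a}\cdot f)$ is $(C,\frac{1}{dl})$-good on $V$, and dividing by $\lambda$ (Lemma \ref{lem:C,alpha}(ii)) shows $a_0 + \mathbf{a}\cdot f$ is as well; similarly $\|\nabla(\lambda\mathbf{a}\cdot f)\| = |\lambda|\,\|\nabla(\mathbf{a}\cdot f)\|$ being good implies $\|\nabla(\mathbf{a}\cdot f)\|$ is good, again by Lemma \ref{lem:C,alpha}(ii) applied to the continuous function $\|\nabla(\mathbf{a}\cdot f)\|$ (or directly, since multiplying by the scalar $|\lambda|^{-1} \in \mathbb{R}_{>0}$ rescales $\varepsilon$ and $\|\cdot\|_{\nu,B}$ proportionally). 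Taking the same $V$ and $C$ (enlarging $C$ if needed for the constant-function cases via Lemma \ref{lem:C,alpha}(v)), and the same $l$, gives the conclusion for all $(\mathbf{a}, a_0) \in F^{n+1}$ with no lower bound on $\|\mathbf{a}\|$.

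I do not expect any serious obstacle here — the corollary is essentially a bookkeeping consequence of Corollary \ref{coro:c-alpha-grad}. The only mild point requiring care is the scaling argument removing the $\|\mathbf{a}\| \ge M_0$ hypothesis: one must check that the $(C,\alpha)$-good property for the \emph{vector-valued} gradient norm $\|\nabla(\mathbf{a}\cdot f)\|$, which is $\mathbb{R}_{\ge 0}$-valued rather than $F$-valued, still transforms correctly under scalar multiplication of $\mathbf{a}$; this is immediate because such scaling multiplies the function by the positive real constant $|\lambda|$, and Definition \ref{defn:C,alpha} is manifestly invariant under multiplication of $f$ by a nonzero constant (the ratio $\varepsilon / \|f\|_{\nu,B}$ is unchanged). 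With this observed, the proof is a short reduction and the statement follows.
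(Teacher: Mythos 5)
Your proposal is correct and matches the paper's approach: the paper itself states that Corollary \ref{coro:c-alpha-grad-homo} follows from Corollary \ref{coro:c-alpha-grad} (proceeding as in \cite[Corollary 3]{BBV}), and your reduction — setting $\theta\equiv 0$ and removing the lower bound $\|\mathbf{a}\|\geq M_0$ via the homogeneity of $a_0+\mathbf{a}\cdot f$ and $\|\nabla(\mathbf{a}\cdot f)\|$ under scaling of $(\mathbf{a},a_0)$, using Lemma \ref{lem:C,alpha}(ii) — is exactly the standard way to do this. The bookkeeping of the $\mathbf{a}=\mathbf{0}$ and real-positive-scalar cases is handled correctly.
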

The proof of Corollary \ref{coro:c-alpha-grad} is exactly the same as that of \cite[Corollary 3]{BBV} or \cite[Corollary 5.1]{DG1}; and Corollary \ref{coro:c-alpha-grad-homo} can be easily proved using Corollary \ref{coro:c-alpha-grad} and proceeding along the same line with the proof of \cite[Corollary 3]{BBV}. Hence, leaving the details to the interested reader, we omit the proofs here. \\

We also mention now another result which will be useful to us later. The following can be regarded as the ultrametric analog of  \cite[Corollary 4]{BBV}, and furthermore, its proof is also adaptive to the function field setting and hence omitted. 
\begin{proposition}\label{prop:non_zero_c_alpha}
Let $U,\mathbf{x}_0,\mathbf h$ and $\theta$ be as in Corollary \ref{coro:c-alpha-grad}. Then for all sufficiently small neighbourhood of $V \subseteq U$ of $\mathbf{x}_0,$ there exists $M_0 >1$ such that
\begin{equation*}
    \inf_{\tiny{\begin{array}{cc}
      (\mathbf{a},a_0) \in F^{n+1} \\  \|\mathbf{a}\| \geq M_0
      \end{array}}} \left(\sup_{\mathbf{x} \in V}  |a_0+\mathbf{a} \cdot \mathbf h(\mathbf{x})+\theta(\mathbf{x})|\right)>0.
\end{equation*}
\end{proposition}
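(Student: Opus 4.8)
\textbf{Proof proposal for Proposition \ref{prop:non_zero_c_alpha}.}

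The plan is to argue by contradiction and exploit compactness together with the linear independence hypothesis (\ref{III}). First I would fix a neighbourhood $V \subseteq U$ of $\mathbf{x}_0$ with compact closure contained in $U$, and suppose towards a contradiction that the infimum is zero. Then there would exist a sequence $(\mathbf{a}^{(k)}, a_0^{(k)}) \in F^{n+1}$ with $\|\mathbf{a}^{(k)}\| \geq M_0$ (for whatever $M_0 > 1$ we have fixed, say the one coming from Corollary \ref{coro:c-alpha-grad}) such that $\sup_{\mathbf{x} \in V} |a_0^{(k)} + \mathbf{a}^{(k)} \cdot f(\mathbf{x}) + \theta(\mathbf{x})| \to 0$. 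The key normalisation step is to divide through by $\|(\mathbf{a}^{(k)}, a_0^{(k)})\|$, which is $\geq \|\mathbf{a}^{(k)}\| \geq M_0 > 1$; writing $(\tilde{\mathbf{a}}^{(k)}, \tilde a_0^{(k)})$ for the rescaled tuple, it lies on the unit sphere of $F^{n+1}$, and $\sup_{\mathbf{x}\in V}|\tilde a_0^{(k)} + \tilde{\mathbf{a}}^{(k)} \cdot f(\mathbf{x}) + \tfrac{1}{\|(\mathbf{a}^{(k)},a_0^{(k)})\|}\theta(\mathbf{x})| \to 0$ as well (the $\theta$ contribution is uniformly bounded by $\|\theta\|_\infty / M_0$, but in fact we need it to vanish, so here we should instead keep $\theta$ but note its coefficient $\to 0$, or simply absorb it — see below).

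Next I would pass to a convergent subsequence: the closed unit sphere $\{\|(\mathbf{a},a_0)\|=1\}$ in $F^{n+1}$ is compact (it is a closed bounded subset of the locally compact space $F^{n+1}$), so along a subsequence $(\tilde{\mathbf{a}}^{(k)}, \tilde a_0^{(k)}) \to (\mathbf{b}, b_0)$ with $\|(\mathbf{b},b_0)\|=1$. Since $f$ extends continuously (indeed analytically) to $\overline{U} \supseteq \overline{V}$, it is bounded on $\overline V$, so $\tilde a_0^{(k)} + \tilde{\mathbf{a}}^{(k)}\cdot f(\mathbf{x}) \to b_0 + \mathbf{b}\cdot f(\mathbf{x})$ uniformly on $V$. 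The coefficient $\|(\mathbf{a}^{(k)},a_0^{(k)})\|^{-1}$ in front of $\theta$ either stays bounded away from $0$ along a further subsequence — in which case $\|(\mathbf{a}^{(k)},a_0^{(k)})\|$ is bounded, so the original $\mathbf{a}^{(k)}$ range over a finite set (discreteness of $\Lambda^n$ is not available here since $\mathbf{a}\in F^n$, so instead one uses that a bounded sequence on the unit sphere after rescaling is just the sequence itself up to scalars and one handles this degenerate case directly) — or it tends to $0$, in which case the $\theta$ term vanishes uniformly and we get $\sup_{\mathbf{x}\in V}|b_0 + \mathbf{b}\cdot f(\mathbf{x})| = 0$, i.e. $b_0 + \mathbf{b}\cdot f \equiv 0$ on $V$. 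This contradicts hypothesis (\ref{III}), which says $1, f_1, \dots, f_n$ are linearly independent over $F$ on every open subset of $U$, since $(\mathbf{b},b_0)\neq 0$.

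The cleanest route, and the one I would actually write, avoids splitting into cases: observe that by Corollary \ref{coro:c-alpha-grad} the function $g_{\mathbf{a},a_0} \defeq a_0 + \mathbf{a}\cdot f + \theta$ is $(C,\tfrac{1}{dl})$-good on $V$ for all $\|\mathbf{a}\|\geq M_0$, and that the family $\{g_{\mathbf{a},a_0} : \|(\mathbf{a},a_0)\|=1, \|\mathbf{a}\|\geq M_0/\text{(bound)}\}$ together with $\theta$ is governed by a compact parameter set; then a $(C,\alpha)$-good function whose sup-norm over $V$ is $0$ must vanish $\nu$-a.e., forcing the linear relation. The main obstacle — and the only genuinely non-routine point — is ensuring the $\theta$ term does not interfere: since $\theta$ is a fixed function (not rescaled with $\mathbf{a}$), after dividing by $\|(\mathbf{a}^{(k)},a_0^{(k)})\|$ its coefficient tends to $0$ precisely when $\|\mathbf{a}^{(k)}\|\to\infty$, and the complementary case $\|\mathbf{a}^{(k)}\|$ bounded must be dispatched separately by a compactness argument on the (then relatively compact) set of admissible $(\mathbf{a},a_0)$ with $\|\mathbf{a}\|\geq M_0$, again landing on a nontrivial vanishing linear combination of $1, f_1,\dots,f_n$ (now with the $\theta$ term present but harmless, since one can first choose $\delta$, equivalently $V$ small, so that $\theta$ is negligible compared to the sup-norm lower bound furnished by (\ref{III}) on $\overline V$). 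This is exactly the structure of the proof of \cite[Corollary 4]{BBV}, and the ultrametric setting changes nothing essential beyond replacing "Euclidean sphere is compact'' with "closed bounded subset of $F^{n+1}$ is compact''.
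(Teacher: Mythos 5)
Your overall strategy --- normalize $(\mathbf{a},a_0)$ to the unit sphere in $F^{n+1}$, use compactness to pass to a limit, and invoke (\ref{III}) --- is the right one and is what \cite[Corollary 4]{BBV} does (the paper defers this proposition to that reference and gives no proof of its own). But your treatment of the ``bounded $\|\mathbf{a}^{(k)}\|$'' branch has a real gap. There you reach a relation $a_0+\mathbf{a}\cdot f+\theta\equiv 0$ on $V$, and you propose to defuse the $\theta$ term by ``choosing $V$ small so that $\theta$ is negligible compared to the sup-norm lower bound furnished by (\ref{III}) on $\overline{V}$.'' That cannot work: $\theta$ is a fixed function, normalised by (\ref{VI}) to satisfy $|\theta|\le 1$ independently of $V$, whereas the quantitative bound
\[
\rho \defeq \min_{\|(\tilde{\mathbf{a}},\tilde{a}_0)\|=1}\;\sup_{\mathbf{x}\in V}\bigl|\tilde{a}_0 + \tilde{\mathbf{a}}\cdot f(\mathbf{x})\bigr| \; > \; 0
\]
that (\ref{III}) plus compactness of the unit sphere produce only \emph{shrinks} as $V$ shrinks, so making $V$ smaller worsens the comparison rather than helping.

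The correct knob is $M_0$, not $V$, and turning it makes the case split (and indeed the whole contradiction-by-sequences setup) unnecessary. Fix a ball $V$ with $\overline V\subseteq U$, so $\rho>0$ as above. Choose $M_0 > \rho^{-1}$. For any $(\mathbf{a},a_0)$ with $\|\mathbf{a}\|\ge M_0$ we have $\|(\mathbf{a},a_0)\|\ge M_0$, hence by homogeneity $\sup_{\mathbf{x}\in V}|a_0+\mathbf{a}\cdot f(\mathbf{x})| \ge \|(\mathbf{a},a_0)\|\,\rho \ge M_0\rho>1 \ge \sup_{\mathbf{x}\in V}|\theta(\mathbf{x})|$; at a point of $\overline V$ where the first supremum is attained, the ultrametric inequality gives $|a_0+\mathbf{a}\cdot f(\mathbf{x})+\theta(\mathbf{x})|=|a_0+\mathbf{a}\cdot f(\mathbf{x})|\ge M_0\rho$, so the infimum in the proposition is $\ge M_0\rho>0$. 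This is direct and uniform. Two smaller remarks: the correct negation of ``there exists $M_0$ with $\inf>0$'' is ``for every $M_0$ the infimum is zero,'' which already forces $\|\mathbf{a}^{(k)}\|\to\infty$ along your sequence, so even the contradiction route would have avoided the bounded case had you negated correctly rather than fixing $M_0$ up front; and the ``$(C,\alpha)$-good cleanest route'' you sketch does not help, since $(C,\alpha)$-goodness alone does not supply the quantitative lower bound $\rho$, which is where the content of the proposition lies.
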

The following theorem is the main result of this section. We will use it in the proof of Theorem \ref{thm:small-grad}. We first recall the notion of $\textit{skew gradient}$ from \cite{BKM} and \cite{MG1}. For two $C^1$ functions $g_i : F^d \rightarrow F ,$ $i=1,2,$ define $\widetilde{\nabla}(g_1,g_2)\defeq g_1 \nabla g_2 - g_2 \nabla g_1.$ The following theorem is the function field analog of Theorem $4.7$ from \cite{MG1}.
\begin{theorem}\label{thm:skew-calpha}
Let $U$ be a neighbourhood of $\mathbf{x}_0 \in F^d.$ Also let $f=(f_1,f_2,\dots,f_n) : U \rightarrow F^n$ be an analytic map  satisfying \ref{II}, \ref{III} and \ref{IV}. Consider the set
$$\mathcal{F} = \{(\mathbf{a} \cdot f, \mathbf{a'} \cdot f + a_0) : \|\mathbf{a}\|=\|\mathbf{a'}\|=\|\mathbf{a} \wedge \mathbf{a'}\|=1, \,\, \mathbf{a},\mathbf{a'} \in F^n, \,\, a_0 \in F \}.$$
Then there is a neighbourhood $V \subseteq U $ of $\mathbf{x}_0$ such that
\begin{enumerate} [label=({{\roman*}})]
    \item For any neighbourhood $B \subseteq V$ of $\mathbf{x}_0,$ there exists $\rho = \rho(\mathcal F, B)$ such that $\displaystyle\sup_{\mathbf{x} \in B} \|\widetilde{\nabla }g(\mathbf{x})\| \geq \rho  $ for any $g \in \mathcal{F}.$
    \item \label{ii} There exist $C,\alpha>0$ such that $\|\widetilde{\nabla}g\|$ is $(C,\alpha)$-good on $V,$ for all $g \in \mathcal{F}.$
\end{enumerate}
\end{theorem}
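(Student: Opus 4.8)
The plan is to establish Theorem \ref{thm:skew-calpha} by combining a compactness argument for part (i) with an application of Proposition \ref{prop:grad_C} (adapted to the skew gradient) for part (ii). Throughout, I would exploit that $f$ is analytic and that, by assumption (\ref{II}), $f_1(\mathbf{x}) = x_1$, together with the linear independence hypothesis (\ref{III}) of $1, f_1, \dots, f_n$ on every open subset of $U$.

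First I would prove (i). Fix a neighbourhood $B \subseteq V$ of $\mathbf{x}_0$. The key point is that $\widetilde{\nabla} g$ cannot vanish identically on $B$ for any $g = (\mathbf{a}\cdot f, \mathbf{a'}\cdot f + a_0) \in \mathcal{F}$: if $g_1 \nabla g_2 - g_2 \nabla g_1 \equiv 0$ on the open set $B$, then (on the open dense subset where $g_1 \ne 0$) the ratio $g_2/g_1$ has zero gradient, hence is locally constant, so $g_2 = c\, g_1$ for some $c \in F$; this forces $\mathbf{a'}\cdot f + a_0 = c\, \mathbf{a}\cdot f$, i.e.\ a nontrivial $F$-linear relation among $1, f_1, \dots, f_n$ (nontrivial because $\|\mathbf{a}\wedge\mathbf{a'}\| = 1$ guarantees $\mathbf{a}, \mathbf{a'}$ are linearly independent, so $\mathbf{a'} - c\mathbf{a} \ne 0$), contradicting (\ref{III}). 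Thus $\sup_{\mathbf{x}\in B}\|\widetilde{\nabla}g(\mathbf{x})\| > 0$ for each $g$. To get a uniform lower bound $\rho > 0$, I would note that $\mathcal{F}$, when its parameter set is given the natural topology — $\mathbf{a}, \mathbf{a'}$ ranging over the compact set $\{\|\mathbf{a}\| = \|\mathbf{a'}\| = \|\mathbf{a}\wedge\mathbf{a'}\| = 1\}$ and $a_0$ over the compact set $\{|a_0| \le 1\}$ (the case $|a_0| > 1$ can be reduced to this by rescaling once one restricts $\|f\| \le 1$, or handled directly since it only makes the function larger away from a small set) — maps continuously into $C^1(V)$, so $g \mapsto \sup_{\mathbf{x}\in B}\|\widetilde{\nabla}g(\mathbf{x})\|$ is continuous and positive on a compact set, hence bounded below by some $\rho = \rho(\mathcal{F}, B) > 0$. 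One must be slightly careful about the unbounded range of $a_0$; I would dispose of that either by the observation that $\widetilde{\nabla}g = g_1\nabla g_2 - g_2\nabla g_1$ depends on $a_0$ only through the $-a_0 \nabla g_1$ term, so for $|a_0|$ large the norm is comparable to $|a_0|\,\|\nabla(\mathbf{a}\cdot f)\|$ which is bounded below by part of the analysis already done (using that $f_1 = x_1$ makes $\nabla(\mathbf{a}\cdot f)$ nonvanishing unless $\mathbf{a}$ is very special), or by a direct two-regime split.

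For (ii), the strategy is to realise each component of $\widetilde{\nabla}g$ as $f_i \,\partial_j(\cdot) - (\cdot)\,\partial_j f_i$-type expressions and invoke a $(C,\alpha)$-good statement for analytic families. Concretely, $\widetilde{\nabla}g$ has components $(\mathbf{a}\cdot f)\,\partial_j(\mathbf{a'}\cdot f + a_0) - (\mathbf{a'}\cdot f + a_0)\,\partial_j(\mathbf{a}\cdot f)$, which are finite $F$-linear combinations (with coefficients among the $a$'s, hence bounded) of the analytic functions $f_i\,\partial_j f_k$ and $\partial_j f_k$. The family of all such combinations, as the parameters range over the compact set above, forms a compact family in $C^{l-1}(V)$ for suitable $l$ (here I use analyticity of $f$ and the boundedness assumptions in (\ref{IV}), exactly as in Corollary \ref{coro:c-alpha-grad-homo}). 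Applying Proposition \ref{prop:grad_C} — more precisely, the Taylor-expansion / nondegeneracy mechanism behind it, checking that the relevant partial derivatives of the skew-gradient components do not all vanish at $\mathbf{x}_0$, which again uses (\ref{II})–(\ref{III}) to rule out total degeneracy — yields a neighbourhood $V$ and constants $C, \alpha > 0$ such that $\|\widetilde{\nabla}g\|$, being a supremum of $(C,\alpha)$-good functions (Lemma \ref{lem:C,alpha}(\ref{item:sup})), is $(C,\alpha)$-good on $V$, uniformly in $g$.

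The main obstacle I anticipate is verifying the nondegeneracy input to Proposition \ref{prop:grad_C} uniformly over $\mathcal{F}$: namely that $\inf_{g \in \mathcal{F}} \max_{0 < |\beta| \le l}|\partial_\beta(\text{component of }\widetilde{\nabla}g)(\mathbf{x}_0)| > 0$ after possibly shrinking to a neighbourhood and choosing $l$ large. This is where analyticity and the linear-independence hypothesis (\ref{III}) must be combined with a compactness argument to upgrade "nonvanishing for each fixed $g$" to a positive infimum; the subtlety is that the skew gradient is quadratic in the data, so the relation $\widetilde{\nabla}g \equiv 0$ must be analysed carefully (as in the proof of (i)) to see it cannot happen on the nose, and then a normal-families / compactness argument must be run in the appropriate $C^l$ topology. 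This is precisely the step carried out in \cite[Theorem 4.7]{MG1}, and I would follow that argument, substituting ultrametric calculus (as recalled in \S\ref{section:calpha}) and Lemma \ref{lem:poly-C,alpha} for their real-variable counterparts.
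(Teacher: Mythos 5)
Your outline of part (i) is essentially the paper's: nonvanishing of $\widetilde{\nabla}g$ on any open set follows from linear independence (your $g_2/g_1$ argument is a clean version of it), and a compactness argument plus a separate analysis of $a_0\to\infty$ gives the uniform lower bound. That part is fine, modulo your very compressed treatment of the unbounded-$a_0$ regime (the paper argues there that $\sup_B\|\widetilde{\nabla}g_n\|\to\infty$ using $\inf_{\|\mathbf{a}\|=1}\sup_{\mathbf{x}\in B}\|\nabla f(\mathbf{x})\cdot\mathbf{a}\|>0$, which is what your ``the $-a_0\nabla g_1$ term dominates'' is gesturing at).

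The genuine gap is in part (ii), and it is precisely the point you flag as ``the main obstacle'' but do not resolve. Your plan is to apply Proposition \ref{prop:grad_C} to the family of skew-gradient components after verifying the nondegeneracy hypothesis uniformly over $\mathcal{F}$. But Proposition \ref{prop:grad_C} requires the family to be \emph{compact} in $C^l(U)$, and $\mathcal{F}$ is not compact: $a_0$ ranges over all of $F$, and the component functions $(\mathbf{a}\cdot f)\partial_j(\mathbf{a'}\cdot f) - (\mathbf{a'}\cdot f + a_0)\partial_j(\mathbf{a}\cdot f)$ blow up in $C^l$ as $|a_0|\to\infty$. Your parenthetical suggestion that ``the case $|a_0|>1$ can be reduced to this by rescaling'' does not work: dividing $g_2+a_0$ by $a_0$ to normalise the constant term destroys the constraint $\|\mathbf{a'}\|=1$ and does not put you back inside a compact family. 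The paper is forced to split into a bounded part $|a_0|\le R$ (Lemma \ref{lem:bounded_skew_C}), where compactness and Proposition \ref{prop:grad_C} do the work — but even here only after first composing with matrices $A_g\in\GL_d(\mathscr{O})$ chosen to make every component of $\widetilde{\nabla}(g\circ A_g)$ individually nonvanishing — and an unbounded part $|a_0|\ge R$ (Lemma \ref{lem:unbounded_skew_C}). The unbounded part requires a fundamentally different mechanism that your proposal does not anticipate: approximate each $f_i$ by its degree-$l$ Taylor polynomial $p_i$, establish a quantitative lower bound $\|\widetilde{\nabla}\mathbf{Q}_{q^r}\|_{B_1}\ge\gamma(1+\|\mathbf{Q}_{q^r}\|_{B_1})$ for the polynomial model uniformly in $|a_0|\ge q^l$ (Lemma \ref{lem:skew_bound}, which rests on a Lagrange-interpolation/difference-quotient estimate valid in the ultrametric setting), and then transfer the $(C,\alpha)$-good estimate from the polynomial model to the analytic $g$ via a perturbation criterion (Lemma \ref{lem:reduction_C_alpha}) that lets one verify the defining inequality only for $\varepsilon$ above a threshold. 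None of this is a ``compactness plus Proposition \ref{prop:grad_C}'' argument; without it, your proof of (ii) does not close.
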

\begin{proof}
The proof follows the approach of \cite{MG1}. To prove part $(i),$ we will proceed by contradiction. Let us suppose that $(i)$ does not hold. Then we can find a neighbourhood $B$ of $\mathbf{x}_0$ such that  there exists $\{g_n\}_n \subseteq \mathcal{F} $ with $\|\widetilde{\nabla}g_n (\mathbf{x})\| \leq 1/n$ for all $\mathbf{x} \in B,$ where $g_n = (\mathbf{a}_n \cdot f, \mathbf{a}_n' \cdot f + a_{0n})$ for any $n \in \mathbb{N}.$ Now if $\{ a_{0n}\}$ has a bounded subsequence, then going to a subsequence, we may assume that $g_n$ is converging to $g \in \mathcal{F}.$ Thus we have $\|\widetilde{\nabla}g(\mathbf{x}) \|=0$ for any $\mathbf{x} \in B$ and this contradicts the linear independence of $1,f_1,\dots,f_n.$ Therefore we may assume that $a_{0n} \rightarrow \infty.$ Note that $\inf_{\|\mathbf{a}\|=1} \sup_{\mathbf{x} \in B} \|  \nabla f(\mathbf{x})\cdot \mathbf{a}\| = \delta,  $ for some $\delta >0$. Using these we get that $\sup_{\mathbf{x} \in B} \|\widetilde{\nabla}g_n(\mathbf{x})\| \rightarrow \infty,$ and this a contradiction to our assumption. Hence the proof of part $(i) $ is done.\\

 The proof of part $(ii)$ is divided into two parts, namely for the `compact part' of $\mathcal{F}$ (i.e., there is some upper bound on $|a_0|$) and the `unbounded part' of $\mathcal{F}.$
 
 \begin{lemma}\label{lem:bounded_skew_C}
 Let $U$ and $f$ be as in the Theorem \ref{thm:skew-calpha} and for $R>0$, define $\mathcal{F}_{\leq R}$ as follows
 \begin{equation*}
     \mathcal{F}_{\leq R} \defeq \{(\mathbf{a} \cdot f, \mathbf{a'} \cdot f + a_0) : \|\mathbf{a}\|=\|\mathbf{a'}\|=\|\mathbf{a} \wedge \mathbf{a'}\|=1, \,\, \mathbf{a},\mathbf{a'} \in F^n, \,\, a_0 \in F , \,\, |a_0| \leq R \}
 \end{equation*}
 Then there exists a neighbourhood $V=V_R$ of $\mathbf{x}_0,$ $C=C_R$ and $\alpha = \alpha_R >0$ such that $\|\widetilde{\nabla}g\| $ is $(C,\alpha)$-good on $V$ for any $g \in \mathcal{F}_{\leq R}.$
 \end{lemma}
 \begin{proof}
 Without loss of generality we can take $\mathbf{x}_0 =0 $ (replace $f(\mathbf{x}) $ by $f(\mathbf{x} + \mathbf{x}_0)$ for this purpose). By rescaling $\mathbf x \in U $ by $r\mathbf x$ and normalizing $f_i $ by $f_i/r',$ where $r,r' \in F$ are constants, we may and will assume all the Taylor coefficients of $f_i$'s are in $\mathscr{O}$ and $U \subseteq X^{-1}  \mathscr{O}^d \defeq\left \{X^{-1}  \mathbf{b}: \mathbf{b} \in \mathscr{O}^d \right \}.$ Note that given any $g \in \mathcal{F}_{\leq R},$ there exists $A_g \in \GL_d(\mathscr{O})$ such that all of the components of $\widetilde{\nabla}(g \circ A_g)$ are non-zero functions. To see this, we write down the expression of $\widetilde{\nabla}(g \circ A_g)$ and use the fact that $1,f_1,\dots,f_n$ are linearly independent on any open subset of $U$.  We complete the proof of this lemma in the next two steps:\\
 
 \noindent \textbf{Step 1}:   There exists a $\delta >0$ and  $A_1,\dots,A_k \in \GL_d(\mathscr{O})$ such that 
 \begin{equation*}
     \displaystyle \sup_{1 \leq j \leq k, \mathbf{x} \in U} |(\widetilde{\nabla} (g \circ A_j))_i (\mathbf{x})| \geq \delta \,\, \,\, \,\, \text{for} \,\, 1 \leq i \leq d. 
 \end{equation*}
 \textit{Proof of Step 1}:  $\,\,$ First note that  for any $g \in \mathcal{F}_{\leq R},$ there exists a $\delta >0$ and $A_g\in  \GL_d(\mathscr{O})$ such that  
 \begin{equation*}
     \displaystyle \sup_{ \mathbf{x} \in U} |(\widetilde{\nabla} (g \circ A_g))_i (\mathbf{x})| \geq \delta \,\, \,\, \,\, \text{for} \,\, 1 \leq i \leq d. 
 \end{equation*}
 If not there exist a sequence $\{g_m\}_m \subseteq \mathcal{F}_{\leq R}$ such that 
  \begin{equation*}
     \displaystyle \sup_{ \mathbf{x} \in U} |(\widetilde{\nabla} (g_m \circ A_{g_m}))_i (\mathbf{x})| \leq 1/m \,\, \,\, \,\, \text{for} \,\, 1 \leq i \leq d. 
 \end{equation*}
 
\noindent Since $\mathcal{F}_{\leq R}$ and $\GL_d(\mathscr{O})$ are compact,  going to a subsequence, we may assume that $g_m$ is converging to $g \in \mathcal F_{\leq R}.$ Then from above we have $\|(\widetilde{\nabla}(g \circ A_g))(\mathbf{x})\|=0$ for any $\mathbf{x} \in U$ and this is a contradiction to the linear independence  of $1,f_1,\dots,f_n$ on any open subset of $U$.\\  
 
\noindent  To complete the proof of this step we define $d$ number of  maps $$\phi_i : \mathcal{F}_{\leq R} \times \GL_d(\mathscr{O}) \rightarrow \{q^r:r \in \mathbb Z\} \cup \{0\}, $$ for $i=1,\dots,d$ by
 \begin{equation*}
     \phi_i(g,A)= \sup_{ \mathbf{x} \in U} |(\widetilde{\nabla} (g \circ A))_i (\mathbf{x})| \quad \text{for} \,\, g \in \mathcal{F}_{\leq R} \,\, \text{and}\,\, A   \in  \GL_d(\mathscr{O}) .
 \end{equation*}
\noindent Now for any $g_0 \in \mathcal{F}_{\leq R},$ consider that $A_{g_0} \in \GL_d(\mathscr{O})$ such that 
\begin{equation*}
 \sup_{ \mathbf{x} \in U} |(\widetilde{\nabla} (g_0 \circ A_{g_0}))_i (\mathbf{x})| \geq \delta 
\end{equation*}
From the  joint continuity of $\phi_i,$ there exists neighbourhood $N(g_0)$ and $N(A_{g_0}) $ of $g_0$ and $A_{g_0}$ respectively such that for all $g \in N(g_0) $ and $A \in N(A_{g_0}),$ $  \sup_{ \mathbf{x} \in U} |(\widetilde{\nabla} (g \circ A))_i (\mathbf{x})| \geq \delta .  $ We cover $\mathcal{F}_{\leq R}$ by such neighbourhoods $N(g)$ for any $g \in \mathcal{F}_{\leq R}.$ By compactness of $\mathcal{F}_{\leq R},$ we get a finite subcover $N(g_1), \dots, N(g_k).$ Consider the corresponding neighbourhoods of $A_g$'s, let they be $N(A_{g_1}),\dots,N(A_{g_k})$. \\

\noindent If we take any $g \in \mathcal{F}_{\leq R},$ there exist a $ l \in \{1,\dots,k\}$ such that $g \in N(g_l).$ This implies
\begin{equation*}
    \sup_{ \mathbf{x} \in U} |(\widetilde{\nabla} (g \circ A_{g_l}))_i (\mathbf{x})| \geq \delta 
\end{equation*}
\noindent Hence we get our desired result and this completes the proof of Step $1$. \\

\noindent \textbf{Step 2}\label{step:2_skew_gra}: There exists   $b=b_{\delta}$ with the following property: for any $g \in \mathcal{F}_{\leq R}$ there exists $1 \leq j \leq k$ so that for any $1 \leq i \leq d$ one can find a multi-index $\beta$ with $|\beta| \leq b,$ and $ |\partial_{\beta}(\widetilde{\nabla} (g \circ A_j))_i (\mathbf{0})| \geq \delta$. \\

\noindent \textit{Proof of Step 2}: To prove this we need to use contradiction argument. By contradiction argument, the proof easily follows using step $1$  and the Taylor series expansion of $(\widetilde{\nabla} (g \circ A_j))_i$ for each $i=1,\dots,d.$ Hence we omit the details here.   \\

 \noindent  To complete the proof of Lemma \ref{lem:bounded_skew_C}, we will use $\theta \equiv 0$ case of Proposition \ref{prop:grad_C}. In view of Step 2 and Proposition \ref{prop:grad_C} (with $\theta \equiv 0$), there exists a neighbourhood $V'$ of the origin, $C$ and $\alpha >0$ such that  for any $g \in \mathcal{F}_{\leq R}$ one can find $1 \leq j \leq k$ so that
 $(\widetilde{\nabla} (g \circ A_j))_i$ is $(C,\alpha)$-good on $V'$ for each $i=1,\dots,d.$ By Lemma \ref{lem:C,alpha}, we get that $\|\widetilde{\nabla}(g \circ A_j)\|$ is $(C,\alpha)$-good on $V'.$ This implies $\|\widetilde{\nabla}g\|$ is $(C,\alpha)$-good on $V'$ for any $g \in \mathcal{F}_{\leq R}.$
 \end{proof}
 To deal with the unbounded part of $\mathcal{F},$ we first need the following lemma, which is the function fields analog of \cite[Lemma 4.9]{MG1}. We have appropriately modified \cite[Lemma 4.9]{MG1} to suit the function field setup. The proof of it is just repeating the arguments of \cite[Lemma 4.9]{MG1}.
 \begin{lemma}\label{lem:skew_bound}
 Let $\mathbf{P}=(p_1,p_2,\dots,p_n),$ where $p_i \in \mathscr{O}[x_1,\dots,x_d]$ are linearly independent polynomials of degree $\leq l.$ For any $r \in \mathbb Z,$ 
 \begin{eqnarray*}
\mathbf{P}_{r}(\mathbf{x})\defeq X^{-rl} \mathbf{P}(X^{r} \mathbf{x})
 \end{eqnarray*}
 Then there exists  $\gamma>0$ and $0<s<1$ such that for any $\mathbf{a}, \mathbf{a}' \in F^n$ with $\|\mathbf{a}\|=\|\mathbf{a}'\|=\|\mathbf{a} \wedge \mathbf{a}'\|=1,$ any $a_0 \in F$ with $|a_0| \geq q^l$ and $q^r < s$ one has 
 \begin{equation*}
     \| \widetilde{\nabla} \mathbf{Q}_{r}(\mathbf{x})\|_{B(\mathbf{0};1)} \geq \gamma (1+ \|\mathbf{Q}_{r}\|_{B(\mathbf{0};1)}),
 \end{equation*}
 where $\mathbf{Q}_{r} = (\mathbf{a} \cdot \mathbf{P}_{r}, \mathbf{a}' \cdot \mathbf{P}_{r} +  X^{-rl} a_0)$.
 \end{lemma}
Now we recall an important lemma from \cite{BKM} and \cite{MG1} and the proof of it is essentially the same as the proofs of Lemma 3.7 and Lemma 4.10 of \cite{BKM} and \cite{MG1} respectively, hence we omit the proof here.
\begin{lemma}\label{lem:reduction_C_alpha}
Let $B \subseteq F^d$ be an open ball of radius $r$ containing the point $\mathbf{y}_0 \in F^d$ and let $\tilde{B}$ be the ball containing the same point $\mathbf{y}_0$ and of radius $(q+1) \cdot r.$ Let $f$ be a continuous function on $\tilde{B},$ and suppose $C, \alpha >0$ and $ 0 < \delta < 1$ are such that 
\begin{equation*}
    \left| \left \{\mathbf{x} \in B' : |f(\mathbf{x})| < \varepsilon \cdot \sup_{\mathbf{x} \in B'} |f(\mathbf{x})| \right \}\right| \leq C \varepsilon^{\alpha}|B'|,
\end{equation*}
for any ball $B' \subseteq \tilde{B}$ and any $\varepsilon \geq \delta. $ Then $f$ is $(C,\alpha')$-good on $B$ whenever $0 < \alpha' < \alpha$ and $Cq\delta^{\alpha - \alpha'} \leq 1.$
\end{lemma}
Now we complete the proof of Theorem \ref{thm:skew-calpha} by proving the following lemma.
\begin{lemma}\label{lem:unbounded_skew_C}
Let $\mathbf{x}_0, U$ and $f$ be as in the Theorem \ref{thm:skew-calpha} and define $\mathcal{F}_{\geq R}$ as follows
 \begin{equation*}
     \mathcal{F}_{\geq R} \defeq \{(\mathbf{a} \cdot f, \mathbf{a'} \cdot f + a_0) : \|\mathbf{a}\|=\|\mathbf{a'}\|=\|\mathbf{a} \wedge \mathbf{a'}\|=1, \,\, \mathbf{a},\mathbf{a'} \in F^n, \,\, a_0 \in F , \,\, |a_0| \geq R \}
 \end{equation*}
 Then for sufficiently large $R$ there exists a neighbourhood $V$ of $\mathbf{x}_0,$ $C$ and $\alpha >0$ such that $\|\widetilde{\nabla}g\| $ is $(C,\alpha)$-good on $V$ for any $g \in \mathcal{F}_{\geq R}.$
\end{lemma}
\begin{proof}
Without loss in generality we assume that $\mathbf{x}_0 =0$ and the Taylor coefficients of each $f_i$ is in $\mathscr{O}$ and $\|\mathbf{x}\| \leq 1.$ Let $p_i$ be the $l$-th degree Taylor polynomial of $f_i$ at $0.$ Then we have $|f_i(\mathbf{x})-p_i(\mathbf{x})| \leq \|\mathbf{x}\|^{l+1}.$ We take $l$ large enough so that $1,p_1,\dots,p_n$ are linearly independent. Let $q^{r_0} < s$ be small enough such that
\begin{equation*}
    2qC_{d,2l-2}\left(\frac{8q^{r_0}}{\gamma}\right)^{\frac{1}{d(2l-1)(2l-2)}} \leq 1,
\end{equation*}
where $s,\gamma$ are given by Lemma \ref{lem:skew_bound} and $C_{d,2l-2}$ is as in Lemma \ref{lem:poly-C,alpha}. Now take $R \geq q^l$ consider $g=(\mathbf{a} \cdot f, \mathbf{a'} \cdot f + a_0)$ and $\tilde{g}=\left(\mathbf{a} \cdot (p_1,\dots,p_n), \mathbf{a'} \cdot (p_1,\dots,p_n)+ a_0)\right).$ In view of Lemma \ref{lem:reduction_C_alpha}, it is enough to show that, for any ball $B(\mathbf{x}_1;q^r) \subseteq B(\mathbf{0};q^{r_0}), $ for $8q^{r_0}/\gamma \leq \varepsilon \leq 1,$ and any $g \in \mathcal{F}_R',$ one has
\begin{equation}\label{equ:skew_C}
    \left| \left\{ \mathbf{x} \in B(\mathbf{x}_1;q^r) : \|\widetilde{\nabla}g(\mathbf{x})\| < \varepsilon \left( \sup_{\mathbf{x} \in B(\mathbf{x_1};q^r)} \|\widetilde{\nabla}g(\mathbf{x})\| \right)\right \}\right| \leq 2 C_{d,2l-2} \varepsilon^{\frac{1}{d(2l-2)}} |B(\mathbf{x}_1;q^r)|
\end{equation}
Now we scale, translate and normalize $g$ and $\tilde{g}$ as follows: let $g_{r}(\mathbf{x})=X^{-rl}g( X^r \mathbf{x} + \mathbf{x}_1 )$ and $\tilde{g}_{r}(\mathbf{x})= X^{-rl}\tilde{g}(X^r  \mathbf{x} + \mathbf{x}_1).$  Note that (\ref{equ:skew_C}) holds iff
\begin{equation*}
     \left| \left\{ \mathbf{x} \in B(\mathbf{0};1) : \|\widetilde{\nabla}g_{r}(\mathbf{x})\| < \varepsilon \left( \sup_{\mathbf{x} \in B(\mathbf{0};1)} \|\widetilde{\nabla}g_{r}(\mathbf{x})\|\right) \right \}\right| \leq 2 C_{d,2l-2} \varepsilon^{\frac{1}{d(2l-2)}} |B(\mathbf{0};1)|.
\end{equation*}
Observe that for any $\mathbf{x} \in B(\mathbf{0};1),$ $\|g_{r}(\mathbf{x})-\tilde{g}_{r}(\mathbf{x})\| < q^r$ and $\|\nabla g_{r}(\mathbf{x}) - \nabla \tilde{g}_{r}(\mathbf{x})\| < q^r.$ Hence
\begin{eqnarray*}
   \|\widetilde{\nabla} g_{r}(\mathbf{x}) - \widetilde{\nabla} \tilde{g}_{r}(\mathbf{x})\| & \leq & q^r(q^r +2) (1 + \|\tilde{g}_{r}(\mathbf{x})\|) \\ [2ex]
   & \leq & 3q^r (1 + \|\tilde{g}_{r}(\mathbf{x})\|) \\ [2ex]
   & \leq & \frac{3q^r}{\gamma} \sup_{\mathbf{x} \in B(\mathbf{0};1)} \|\widetilde{\nabla} \tilde{g}_{r} (\mathbf{x})\|,
\end{eqnarray*}
using the last lemma.
And so  $\left\{ \mathbf{x} \in B(\mathbf{0};1) : \|\widetilde{\nabla}g_{r}(\mathbf{x})\| < \varepsilon \cdot \sup_{\mathbf{x} \in B(\mathbf{0};1)} \|\widetilde{\nabla}g_{r}(\mathbf{x})\| \right \}$ is contained in 
 
\begin{equation*}
    \left \{ \mathbf{x} \in B(\mathbf{0};1): \|\widetilde{\nabla}\tilde{g}_{r}(\mathbf{x})\| - \frac{3q^r}{\gamma} \sup_{\mathbf{x} \in B(\mathbf{0};1)} \|\widetilde{\nabla} \tilde{g}_{r}(\mathbf{x})\| < \varepsilon \left( 1 + \frac{3q^r}{\gamma}\right) \sup_{\mathbf{x} \in B(\mathbf{0};1)}  \|\widetilde{\nabla} \tilde{g}_{r}(\mathbf{x})\| \right \}
\end{equation*}
\begin{equation*}
    =\left \{ \mathbf{x} \in B(\mathbf{0};1): \|\widetilde{\nabla}\tilde{g}_{r}(\mathbf{x})\|  < \left(\varepsilon \left(1+ \frac{3q^r}{\gamma}\right) + \frac{3q^r}{\gamma} \right) \sup_{\mathbf{x} \in B(\mathbf{0};1)}  \|\widetilde{\nabla} \tilde{g}_{r}(\mathbf{x})\| \right \}
\end{equation*}
\begin{equation*}
    \subseteq \left \{ \mathbf{x} \in B(\mathbf{0};1): \|\widetilde{\nabla}\tilde{g}_{r}(\mathbf{x})\|  < 2 \varepsilon \sup_{\mathbf{x} \in B(\mathbf{0};1)}  \|\widetilde{\nabla} \tilde{g}_{r}(\mathbf{x})\| \right \}.
\end{equation*}
Now since each component of $\widetilde{\nabla}\tilde{g}_{r}(\mathbf{x})$ is a polynomial of degree at most $2l-2,$ and $\widetilde{\nabla}\tilde{g}_{r}$ is not zero, we get our desired result and this completes the proof.
\end{proof}
Combining Lemma \ref{lem:bounded_skew_C} and Lemma \ref{lem:unbounded_skew_C}, we complete the proof of part \ref{ii} of Theorem \ref{thm:skew-calpha}.
\end{proof}
\section{Estimate for the big gradient part}\label{section:estimate_big_grad}
The proof of Theorem \ref{thm:inhomo-conv} is based on delicate estimates for measures of sets given by certain Diophantine inequalities. To be more specific, we deal with the set of 
$\mathbf{x} \in U$ such that for some polynomial vector $\mathbf{a} \in \Lambda^n$, the function $F_{\theta}(\mathbf x)=f(\mathbf x) \cdot \mathbf a + \theta(\mathbf x)$ becomes very close to a polynomial. To this end, as mentioned earlier in \ref{stra}, we split it into two parts depending upon whether the gradient $\nabla F_{\theta}(\mathbf x)=\nabla (f(\mathbf x) \cdot \mathbf a + \theta(\mathbf x))$ is big or small. At the outset, we consider the analogous homogeneous sets, i.e., the ones corresponding to $\theta \equiv 0$, and provide a measure estimate. Then from those (homogeneous) estimates with the help of some other tools, we get the final estimates for our desired sets. In this section, we find a measure estimate for the homogeneous big gradient set.

 \begin{theorem}\label{thm:big-grad}
 Suppose $U$ is an open subset of $F^d$ and $f: U  \rightarrow F^n$ satisfies \ref{II}, \ref{III} and \ref{IV}. Let $ 0 < \varepsilon < \frac{1}{2}$, $\delta>0$ and $t_1,\dots,t_n \in \mathbb{Z}_{\geq 0}.$ Let $\mathcal{A}$ be

\begin{equation}\label{big gra}
 \left \{\mathbf{x}\in U: \exists \,\, \mathbf{a}=(a_1,\dots,a_n) \in \Lambda^n, \,\, a_0 \in \Lambda \,\, \,\,
  \left|
\begin{array}{lcl}
\left| f(\mathbf{x}) \cdot \mathbf{a} + a_0 \right| < \delta q^{-(\sum_1^n t_i)}\\[2ex]
\|\nabla( f(\mathbf{x})\cdot \mathbf{a}) \|\geq \|\mathbf{a}\|^{1-\varepsilon} \\[2ex] 
|a_i|=q^{t_i}, \quad i=1,\dots,n
\end{array}\right.\right\},
\end{equation} 
then $|\mathcal{A}| < C \delta |U|,$ for  a universal constant $C.$ 
\end{theorem}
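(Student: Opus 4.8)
In the statement the integers $t_1,\dots,t_n$ are to be read as $\deg a_1,\dots,\deg a_n$; equivalently one fixes an arbitrary $(t_1,\dots,t_n)\in\mathbb Z_{\ge 0}^n$, lets $\mathcal A=\mathcal A(t_1,\dots,t_n)$ be the set in question, and proves the bound with a constant independent of $(t_1,\dots,t_n)$. Write $T\defeq\max_i t_i$, so $\|\mathbf a\|=q^{T}$ for every $\mathbf a$ entering the union, put $\rho\defeq\delta\,q^{-\sum_i t_i}$, and note we may assume $\rho<1$ (if $\rho\ge 1$ then $C\delta|U|\ge Cq^{\sum_i t_i}|U|\ge|U|\ge|\mathcal A|$ already). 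For an admissible $\mathbf a$ set
\[
S_{\mathbf a}\defeq\Bigl\{\mathbf x\in U:\ \dist\bigl(f(\mathbf x)\cdot\mathbf a,\ \Lambda\bigr)<\rho\ \text{ and }\ \|\nabla(f(\mathbf x)\cdot\mathbf a)\|\ge q^{T(1-\varepsilon)}\Bigr\}.
\]
Since $\mathcal A=\bigcup_{\mathbf a}S_{\mathbf a}$ and there are exactly $(q-1)^{n}q^{\sum_i t_i}$ admissible $\mathbf a$, the theorem reduces to the uniform estimate $|S_{\mathbf a}|\le C_0\,\rho\,|U|$ with $C_0=C_0(d,n,q)$; summing over $\mathbf a$ then gives $|\mathcal A|\le C_0(q-1)^{n}\delta|U|$.

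Fix $\mathbf a$ and write $g\defeq f(\cdot)\cdot\mathbf a$. Set $r\defeq q^{-\lfloor T\varepsilon\rfloor-1}$, so $\|\mathbf a\|\,r=q^{T}r<q^{T(1-\varepsilon)}$, and cover $U$ by balls of radius $r$; since $F^{d}$ is ultrametric and $U$ is a box, at most $O(|U|\,r^{-d})$ (disjoint) balls suffice, and only those meeting $G_{\mathbf a}\defeq\{\|\nabla g\|\ge q^{T(1-\varepsilon)}\}$ contribute. Let $B=B(\mathbf x_0,r)$ with $\mathbf x_0\in G_{\mathbf a}$, and choose $j$ with $|\partial_j g(\mathbf x_0)|=\|\nabla g(\mathbf x_0)\|\ge q^{T(1-\varepsilon)}$. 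From $\partial_j g=\sum_i a_i\,\partial_j f_i$, $\|\mathbf a\|=q^{T}$ and the normalisation of the (mixed) second difference quotients $|\bar\Phi_\beta f_i|\le 1$ of (\ref{IV}), one gets $|\partial_j g(\mathbf x)-\partial_j g(\mathbf x_0)|\le q^{T}r<q^{T(1-\varepsilon)}\le|\partial_j g(\mathbf x_0)|$ for all $\mathbf x\in B$, hence by the strong triangle inequality $|\partial_j g|$ is \emph{constant}, equal to some $M_B\ge q^{T(1-\varepsilon)}$, on all of $B$. Applying the same bounds to the first difference quotient of $g$ restricted to a line shows moreover that along any line $\ell\subseteq B$ in the $j$-th coordinate direction, $|g(\mathbf x)-g(\mathbf y)|=M_B\,\|\mathbf x-\mathbf y\|$ for $\mathbf x,\mathbf y\in\ell$.

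Now slice $B$ by its lines $\ell$ in the $j$-th direction and use Fubini. On such an $\ell$ the map $g|_\ell$ dilates distances exactly by $M_B$, so its image lies in a ball of radius $M_Br$; hence $\{\mathbf x\in\ell:\dist(g(\mathbf x),\Lambda)<\rho\}$ is a disjoint union, over the $a_0\in\Lambda$ whose $\rho$-ball meets that image, of sets each of $1$-dimensional measure $\le\rho/M_B$, and the number of such $a_0$ is $\le\#\bigl(\Lambda\cap B(\,\cdot\,,\max\{M_Br,\rho\})\bigr)\le\max\{1,\,q\,M_Br\}$, since $\rho<1$ and $\Lambda$ has covolume $1$. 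Thus the $1$-dimensional measure of this slice of $S_{\mathbf a}$ is $\le\max\{\rho/M_B,\ q\,r\rho\}$; integrating over the transversal (of $(d-1)$-measure $r^{d-1}$) and summing over the $O(|U|r^{-d})$ balls gives
\[
|S_{\mathbf a}|\ \lesssim\ |U|\,r^{-1}\max\{\rho/M_B,\ q\,r\rho\}\ =\ |U|\max\{\rho\,M_B^{-1}r^{-1},\ q\rho\}.
\]
Finally $M_B^{-1}r^{-1}\le q^{-T(1-\varepsilon)}\cdot q^{T\varepsilon+1}=q\cdot q^{T(2\varepsilon-1)}\le q$, the last step being exactly where $\varepsilon<\tfrac12$ enters and the reason the bound is independent of $T$; so $|S_{\mathbf a}|\lesssim q\,\rho\,|U|$, which is the desired uniform estimate, and the theorem follows with $C=O\bigl(q(q-1)^{n}\bigr)$.

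The only genuinely delicate points are the two rigidity statements of the second paragraph — that on each small ball $|\partial_j g|$ is \emph{exactly} constant and that $g$ dilates distances \emph{exactly} by that constant along coordinate lines — which rest on combining the ultrametric strong triangle inequality with the normalisations $\|\nabla f\|\le 1$, $|\bar\Phi_\beta f|\le 1$ of hypothesis (\ref{IV}). Once these are in hand the measure count — balancing the number $\sim qM_Br$ of ``sheets'' $\{\dist(g,\Lambda)<\rho\}$ against the sheet width $\sim\rho/M_B$, so that their product is $\sim qr\rho$ independently of $M_B$ — is routine, and the choice $r\approx q^{-T\varepsilon}$ together with $\varepsilon<\tfrac12$ is precisely what prevents a loss of a power of $T$ in the final bound.
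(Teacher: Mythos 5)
Your argument is correct and reaches the same bound, but the covering scheme differs from the paper's in a way worth noting. After the (shared) reduction to a uniform estimate $|S_{\mathbf a}|\le C_0\,\rho\,|U|$ for each admissible $\mathbf a$, the paper covers $\mathcal A_{\mathbf a}$ by balls $B(\mathbf x,\tfrac{1}{2\|\nabla g(\mathbf x)\|})$ whose radius depends on the point, proves (its ``First step'') that on each such ball there is a \emph{unique} admissible $a_0\in\Lambda$, slices in a coordinate direction, and finally assembles via the ultrametric Besicovitch theorem. You instead tile $U$ by balls of a fixed radius $r\approx q^{-T\varepsilon}$ depending only on $\|\mathbf a\|$, use the strong triangle inequality together with the second-difference-quotient bound of (\ref{IV}) to show $|\partial_j g|$ is \emph{exactly} constant $=M_B$ on each cover ball and that $g$ is an exact dilation by $M_B$ along coordinate lines, and then count the $a_0$'s (``sheets'') rather than proving uniqueness; this count, $\lesssim qM_Br$ sheets of width $\rho/M_B$ each, makes the $M_B$-dependence drop out, and $\varepsilon<\tfrac12$ enters exactly where you say, in ensuring $M_B^{-1}r^{-1}\le q$. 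Both routes rest on the same ultrametric Taylor estimates, give the same per-ball proportion $\lesssim q\rho$, and produce the same final constant $O(q(q-1)^n)$; the paper's trades a uniqueness lemma plus Besicovitch for your sheet count plus a plain tiling. Two small slips of no consequence: $\Lambda=\mathbb F_q[X]$ has covolume $q^{-1}$ (not $1$) in $F$, though your bound $\#(\Lambda\cap B(\cdot,R))\le\max\{1,qR\}$ is the correct one; and you should say explicitly that your $T$ is $\max_i t_i$ while the paper's symbol $T$ denotes $q^{\sum_i t_i}$, to avoid a clash of notation if this were spliced in.
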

\begin{proof}
    The proof follows the approach of \cite[Theorem 1.2]{MG1}.  Fix $\mathbf{a}=(a_1,\dots,a_n) \in \Lambda^n$ such that  $|a_i|=q^{t_i}$ for $i=1,\dots ,n$, and define $T=q^{\sum_i t_i}.$ Let $0 < \varepsilon < \frac{1}{2}$ be fixed. Let $g(\mathbf{x})=f(\mathbf{x}) \cdot \mathbf{a}$ for any $\mathbf{x} \in U,$ and consider the following set
$$\mathcal{A}_{\mathbf{a}}=\{\mathbf{x} \in \mathcal{A} :  \text{the   hypothesis  of the  theorem holds  for} \,\, \mathbf{a}=(a_1,\dots,a_n)\}.$$
If  $\mathbf x \in \mathcal{A}_{\mathbf{a}}.$ Then we have:
\begin{enumerate}[label=({{\arabic*}})]
    \item\label{1} For some $a_0 \in \Lambda,$ $|g(\mathbf{x})+a_0| < \delta T^{-1}$
    \item\label{2} $\|\nabla g(\mathbf{x})\| \geq  \|\mathbf{a}\|^{1-\varepsilon}$
\end{enumerate}
Also  since $f$ satisfies \ref{IV}, $g$ satisfies the following conditions:
\begin{enumerate}[label=({{\alph*}})]
    \item\label{a} $\|\nabla g(\mathbf{x})\| \leq \|\mathbf{a}\|$ for any $\mathbf{x} \in U.$
    \item\label{b} For any $1\leq i,j \leq d,$ $\mathbf{y_1},\mathbf{y_2},\mathbf{y_3} \in U,$ $|\bar\Phi_{ij}(g)(\mathbf{y_1},\mathbf{y_2},\mathbf{y_3} )| \leq \|\mathbf{a}\|,$ where $\bar\Phi_{ij}(\cdot)=\bar\Phi_{\beta}(\cdot)$ with $\beta$ being the multiindex whose $i$-th and $j$-th coordinates are $1$ and all other coordinates are zero. 
\end{enumerate}
First note that it is enough to show that $|\mathcal{A}_{\mathbf{a}}| < C' \delta T^{-1}|U|$ for some constant $C'>0.$ Because if we have the above estimate, summing over all possible $\mathbf{a}$'s will give us the desired estimate of the measure of $\mathcal{A}$. 

\noindent Let $B(\mathbf{x})$ be a neighbourhood of $\mathbf{x}$ defined by:
$$B(\mathbf{x})\defeq B\left(\mathbf{x};\frac{1}{2\|\nabla g(\mathbf{x})\|}\right).$$

For $\mathbf{x} \in \mathcal{A}_{\mathbf{a}},$ let $a_0$ be an element in $\Lambda$ which satisfies \ref{1}.\\

\noindent By considering the Taylor expansion of $g$ about $\mathbf x,$  it is easy to observe that,  $\forall \mathbf{y} \in B(\mathbf{x}),$ $B(g(\mathbf{y});1/2) \cap \Lambda \subseteq \{a_0\},$ i.e., $a_0$ is the only possible $1/2$ approximation of $g(\mathbf{y})$ by an element of $\Lambda$.\\



\noindent\textit{First step}: Now we show that  $\forall \mathbf{y} \in B(\mathbf{x}),$ one has 
$\|\nabla g(\mathbf{y})- \nabla g(\mathbf{x})\| < \frac{\|\nabla g(\mathbf{x})\|}{2}$. \\

Throughout let $\bar{\Phi}_j(\cdot)$ stand for  $\bar{\Phi}_{\beta} (\cdot)$ with $\beta$ being the multiindex whose $j$-th coordinate is $1$ and all other coordinates are zero. Now we consider the Taylor expansion of $\partial_i g$ about $\mathbf{x}.$ So let $\mathbf{y}=\mathbf{x} + \mathbf{z}$.
  Now we have 

\begin{eqnarray*}
   \partial_i g(\mathbf{y}) & = & \partial_i g(\mathbf{x}) + \sum_j \bar\Phi_j (\partial_i g)(\cdot)\mathbf{z}^j \\ [2ex]
   & = &  \partial_i g(\mathbf{x}) + \sum_j \left (\bar\Phi_{ji} ( g)(\cdot)+\bar\Phi_{ji} ( g)(\cdot) \right)\mathbf{z}^j,
\end{eqnarray*}
where the arguments of $\bar\Phi_{ij}(g)$ and $\bar\Phi_j(\partial_i g)$  are some of the components of $\mathbf{x}$ and $\mathbf{y}.$ In view of above, we get 
$$|\partial_i g(\mathbf{y}) - \partial_i g(\mathbf{x}) | \leq \max_j \{|\bar\Phi_{ij}(g)(\cdot)| \|\mathbf{z}\|^j\}  \leq \|\mathbf{a}\| \|\mathbf{z}\| \leq \frac{\|\mathbf{a}\|}{2\|\nabla g(\mathbf{x})\|} < \frac{\|\nabla g(\mathbf{x})\|}{2},$$
 by \ref{b},   \ref{2}, the definition of $B(\mathbf{x}),$ and using the fact that $\|\mathbf{z}\| <1$ and $0 < \varepsilon < 1/2.$ Hence we are done with this step.\\

\noindent \textit{Second step}: We claim that $ |\mathcal{A}_{\mathbf{a}} \cap B(\mathbf{x})| \leq C' \delta T^{-1}|B(\mathbf{x})|$.\\
 
 Without loss in generality, we may and will assume that  $\|\nabla g(\mathbf{x})\|=|\partial_1 g(\mathbf{x})|.$ First we calculate the measure of the considered set in the $e_1$ direction and then use Fubini's theorem to complete the proof. Now let $\mathbf{y}, \mathbf{y}' \in \mathcal{A}_{\mathbf{a}} \cap B(\mathbf{x}),$ and $\mathbf{y}'=\mathbf{y} + \alpha e_1.$ By the first step and \ref{1}, we have $|g(\mathbf{y})+a_0| < \delta T^{-1}$ and $|g(\mathbf{y}')+a_0| < \delta T^{-1}$ for some $a_0 \in \Lambda,$ and so
\begin{equation}\label{equ:thin-set}
    |g(\mathbf{y})-g(\mathbf{y}')| \leq \delta T^{-1}
\end{equation}
Again by the Taylor expansion we get 
\begin{equation*}
    g(\mathbf{y} + \alpha e_1) - g(\mathbf{y})= \partial_1 g(\mathbf{y}) \alpha + \Phi_{11} g(\cdot) \alpha^2,
\end{equation*}
this implies
\begin{equation}\label{equ:mean}
    |g(\mathbf{y} + \alpha e_1) - g(\mathbf{y})|=|\partial_1 g(\mathbf{y})| |\alpha|,
\end{equation}
by norm comparison of the two terms on the right-hand side of the above Taylor's expansion and using the equality case of the ultrametric triangle inequality.

Now we slice our concerned set by fixing the last $d-1$ entries. Observe that the measure of each slice is at most $C''\delta T^{-1}/\|\nabla g(\mathbf{x})\|=C'' (\delta T^{-1})2 \times \text{radius of}\,\, B(\mathbf{x}),$ by using  \eqref{equ:mean}, \eqref{equ:thin-set} and the second step. We complete the proof of this step by using Fubini's theorem.\\

\noindent \textit{Final step}:  $\{B(\mathbf{x})\}_{\mathbf{x} \in \mathcal{A}_{\mathbf{a}}}$ is a covering of $\mathcal{A}_{\mathbf{a}}.$ Using the covering property of ultrametric spaces and the third step, we get 
$$|\mathcal{A}_{\mathbf{a}}| \leq C' (\delta T^{-1})|U|.$$
We now sum up over all possible $\mathbf{a}$'s and get $|\mathcal A| < C\delta |U|,$ as desired. 
 \\
\end{proof}

As an immediate corollary of the above theorem, we get the following.
\begin{corollary}\label{coro:big_grad_thm}
Suppose $U$ is an open subset of $F^d$ and $f: U  \rightarrow F^n$ satisfies \ref{II}, \ref{III} and \ref{IV}. Let $ 0 < \varepsilon < \frac{1}{2}$, $\delta>0$ and $t \in \mathbb{Z}_{\geq 0}$. Let $\mathcal{A}'$ be

\begin{equation}
 \left \{\mathbf{x}\in U: \exists \,\, \mathbf{a}=(a_1,\dots,a_n) \in \Lambda^n, \,\, a_0 \in \Lambda \,\, \,\,
  \left|
\begin{array}{lcl}
\left| f(\mathbf{x}) \cdot \mathbf{a} + a_0 \right| < \delta q^{-nt}\\[2ex]
\|\nabla( f(\mathbf{x})\cdot \mathbf{a}) \|\geq \|\mathbf{a}\|^{1-\varepsilon} \\[2ex] 
\|\mathbf{a}\|=q^{t}, \quad i=1,\dots,n
\end{array}\right.\right\};
\end{equation} 
then $|\mathcal{A}'| < C' \delta |U|,$ for a universal constant $C'.$ 
\end{corollary}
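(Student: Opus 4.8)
The plan is to deduce Corollary \ref{coro:big_grad_thm} from Theorem \ref{thm:big-grad} by covering $\mathcal A'$ with the sets that appear in that theorem and summing a convergent geometric series. First I would make the dependence on parameters explicit: the set $\mathcal A$ of Theorem \ref{thm:big-grad} depends, besides on $\varepsilon$ and $\delta$, on the auxiliary integers $t_1,\dots,t_n\ge 0$, so write it $\mathcal A_\delta(t_1,\dots,t_n)$. The content of Theorem \ref{thm:big-grad} is then that $|\mathcal A_\delta(t_1,\dots,t_n)|<C\delta|U|$ with $C$ \emph{independent} of $\delta$ and of the tuple $(t_1,\dots,t_n)$. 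I would also record that the proof of Theorem \ref{thm:big-grad} uses only hypothesis (\ref{IV}) (and neither (\ref{II}) nor (\ref{III})), so the same estimate holds verbatim when the witness $\mathbf a$ is required to be supported on a prescribed non-empty subset $S\subseteq\{1,\dots,n\}$ of the coordinates, with a constant depending only on $|S|$, hence only on $n$ and $q$. For readability I would run the main argument for witnesses with all $a_i\neq 0$ and note that the other cases are identical.

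Next comes the covering step. Given $\mathbf x\in\mathcal A'$, pick a witness $(\mathbf a,a_0)$ with $\|\mathbf a\|=q^{t}$ and put $t_i\defeq\degree a_i$; then $0\le t_i\le t$ for each $i$, $\max_i t_i=t$, and in particular $\sum_i t_i\le nt$. Consequently
\[
\bigl|f(\mathbf x)\cdot\mathbf a+a_0\bigr|<\delta q^{-nt}=\bigl(\delta\,q^{\sum_i t_i-nt}\bigr)\,q^{-\sum_i t_i},
\]
so, writing $\delta'\defeq\delta\,q^{\sum_i t_i-nt}\le\delta$, the point $\mathbf x$ lies in $\mathcal A_{\delta'}(t_1,\dots,t_n)$: the gradient condition is unchanged, and $|a_i|=q^{t_i}$ holds by the choice of the $t_i$. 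Therefore
\[
\mathcal A'\ \subseteq\ \bigcup_{\substack{0\le t_i\le t\\ \max_i t_i=t}}\mathcal A_{\delta q^{\sum_i t_i-nt}}(t_1,\dots,t_n).
\]

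Finally I would apply Theorem \ref{thm:big-grad} term by term and sum, substituting $s_i\defeq t-t_i\ge 0$ (which turns $\sum_i t_i-nt$ into $-\sum_i s_i$) and then discarding the constraints $s_i\le t$ and $\min_i s_i=0$:
\[
|\mathcal A'|\ \le\ \sum_{\substack{0\le t_i\le t\\ \max_i t_i=t}}\bigl|\mathcal A_{\delta q^{\sum_i t_i-nt}}(t_1,\dots,t_n)\bigr|
\ <\ C\delta|U|\sum_{\substack{0\le t_i\le t\\ \max_i t_i=t}}q^{\sum_i t_i-nt}
\ \le\ C\delta|U|\Bigl(\sum_{s\ge 0}q^{-s}\Bigr)^{n}
\ =\ C\delta|U|\Bigl(\tfrac{q}{q-1}\Bigr)^{n}.
\]
This yields the corollary with $C'\defeq C\bigl(q/(q-1)\bigr)^{n}$; including the cases in which $\mathbf a$ has some vanishing coordinates only introduces a further bounded factor (at most $2^{n}$), by the same computation applied on each coordinate subset $S$ and the observation from the first paragraph. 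The whole argument is routine; the only point demanding a little care is the bookkeeping for witnesses with vanishing coordinates, which is precisely why one wants to know that Theorem \ref{thm:big-grad} remains valid for any non-empty sub-collection of the $f_i$'s. I do not anticipate a genuine obstacle beyond this.
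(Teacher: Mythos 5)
Your proof is correct, and it takes a genuinely different route from the paper. The paper disposes of this corollary with ``the proof is exactly similar to that of Theorem~\ref{thm:big-grad}'': that is, one reruns the covering--Fubini--Besicovitch argument verbatim, this time fixing $\mathbf{a}$ with $\|\mathbf{a}\|=q^t$, taking $T=q^{nt}$, and summing over the $q^{tn}(q^n-1)$ vectors of that norm at the end. You instead treat Theorem~\ref{thm:big-grad} as a black box: you parametrize its conclusion by $(\delta;t_1,\dots,t_n)$, cover $\mathcal A'$ by the sets $\mathcal A_{\delta q^{\sum t_i-nt}}(t_1,\dots,t_n)$ (using $\sum t_i\le nt$ to shrink $\delta$), and sum a geometric series after the change of variables $s_i=t-t_i$. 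Both routes are sound. Your version has the merit of being modular and of making explicit that only hypothesis~(\ref{IV}) enters, which is exactly what lets you extend Theorem~\ref{thm:big-grad} to witnesses supported on a proper coordinate subset $S$ and thereby handle the case of vanishing $a_i$; the paper's rerun avoids this case split entirely since it never introduces $t_i=\degree a_i$. The only extra care your route demands --- the $2^n$ subset bookkeeping and the observation that $q^{(|S|-n)t}\le 1$ keeps each partial sum bounded by $(q/(q-1))^{|S|}$ --- is exactly the point you flag, and you resolve it correctly, so there is no gap.
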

\noindent The proof of it is exactly the same as the proof of Theorem \ref{thm:big-grad} and hence we omit. 

\section{Estimate for the small gradient part}\label{section:estimate_small_grad}
Here we continue with the same theme as mentioned in \S \ref{section:estimate_big_grad} and obtain a measure estimate for homogeneous small gradient set, which is kind of a complementary set to $\mathcal A$ considered in the hypothesis of Theorem \ref{thm:big-grad}. Later using the inhomogeneous transference principle, we provide a measure estimate for analogous inhomogeneous small gradient set. 

\begin{theorem}\label{thm:small-grad}
Suppose $U$ is an open subset of $F^d$ and $f: U  \rightarrow F^n$ satisfies \ref{II}, \ref{III} and \ref{IV}. Then for any $\mathbf{x}_0 \in U,$ one can find a neighbourhood $V \subseteq U$ of $\mathbf{x}_0$ and $\alpha >0$ with the following property: for any ball $B \subseteq V,$ there exists $E>0$ such that for any choice of $t,t',t_1,\dots,t_n \in \mathbb Z$ with $t \geq 0,$  $t_1,\dots,t_n \geq 1,$ and  $t' + \sum_i t_i - t - \max_i t_i < 0$ one has 

\begin{equation}\label{equ:small grad}
 \left|\left \{\mathbf{x}\in B : \exists \,\, \tilde{\mathbf{a}}=(\tilde{a_1},\dots,\tilde{a_n}) \in \Lambda^n,\,\, \tilde{a_0} \in \Lambda \,\,
  \left|
\begin{array}{lcl}
\left| f(\mathbf{x}) \cdot \tilde{\mathbf{a}} + \tilde{a_0}\right| < q^{-t} \\[2ex]
\|\nabla (f(\mathbf{x}) \cdot \tilde{\mathbf{a}}) \| < q^{t'} \\[2ex]  
|\tilde{a_i}| <  q^{t_i}, \,\, \,\,  i=1,\dots,n
\end{array}\right.\right\}\right| \leq E \varepsilon^{\alpha}|B|,
\end{equation} 
where $ \varepsilon \defeq \max \left( q^{-t}, q^{\frac{t' + \sum_i t_i - t - \max_i t_i}{n+1}} \right)$.

\end{theorem}

\subsection{The proof of Thoerem \ref{thm:small-grad} }
We prove Theorem \ref{thm:small-grad} by converting it into a dynamical problem following \cite{BKM}. In this section, $m$ will stand for $n+d+1.$ Denote the standard basis of ${F}^m$ by $\{\mathbf{e}_0,\mathbf{e}_1^*,\dots,\mathbf{e}_d^*,\mathbf{e}_1,\dots,\mathbf{e}_n\}.$ Also denote by $\Gamma $ the $\Lambda$-submodule generated by $\mathbf{e_0},\mathbf{e_1}, \dots, \mathbf{e_n},$ i.e.
\begin{equation}\label{equ:gamma}
\Gamma = \left\{\begin{pmatrix}
\tilde{a_0}\\
\mathbf 0\\
\tilde{\mathbf{a}}
\end{pmatrix} :  \tilde{a_0} \in \Lambda, \tilde{\mathbf{a}} \in \Lambda^{n} \right\}.
\end{equation}
Now take $t,$ $t',$ $t_i$'s, and the function $f$ as in Theorem \ref{thm:small-grad}, and let 
\begin{equation}\label{equ:u_x}
H_{f(\mathbf{x})} = \begin{pmatrix}
1 & 0 & f(\mathbf{x})\\
0 & I_d & \nabla f(\mathbf{x})\\

0 & 0 & I_n
\end{pmatrix} \in \SL_m(F).
\end{equation}
Note that 
\begin{equation}
H_{f(\mathbf{x})} \begin{pmatrix}
\tilde{a_0}\\
0\\
\tilde{\mathbf{a}}
\end{pmatrix}
= \begin{pmatrix}
f(\mathbf{x}) \cdot \tilde{\mathbf{a}}+\tilde{a_0}\\
\nabla (f(\mathbf{x}) \cdot \tilde{\mathbf{a}})\\
\tilde{\mathbf{a}}
\end{pmatrix} 
\end{equation}
Hence the existence of a nonzero solution in the system of inequalities in the left hand side of \eqref{equ:small grad} implies that the lattice $H_{f(\mathbf{x})} \Gamma$ have nonzero element inside certain parallelepiped in $F^m$. Next, we consider an appropriate diagonal matrix $D\in \GL_m(F)$ so as to transform that into a small cube. This would ensure that the solvability of the above system of inequalities will be same as finding short vectors of the lattice $DH_{f(\mathbf{x})} \Gamma$. Finally, we use the `quantitative nondivergence' estimate to get a measure estimate of the set of $\mathbf{x} \in B$ for which the above phenomena happen. 
Specifically, take $t,t', t_1,\dots,t_n, $ as in Theorem \ref{thm:small-grad} and   $\varepsilon= \max \left( q^{-t}, q^{\frac{t' + \sum_i t_i - t - \max_i t_i}{n+1}} \right)$ be same as in Theorem \ref{thm:small-grad}. For this $\varepsilon,$ we define $\lceil \varepsilon \rceil \in F$ as follows:
\begin{equation}
    \lceil \varepsilon \rceil \defeq\left \{ \begin{array}{lcl}\displaystyle  X^{-t} & \mbox{if} \,\, t < \frac{ t+\max_{1\leq i \leq n}t_i-t'-\sum_{i}t_i}{n+1} \\
                          X^{\left[\frac{t'+\sum t_i -t - \max t_i}{n+1}\right]+1}            &\mbox{otherwise},
     \end{array} \right.
\end{equation}
where for any $x \in \mathbb{R},$ $[x]$ denotes the greatest integer not greater than $x.$ Observe that $\lceil \varepsilon \rceil \neq 0$ and $|\lceil \varepsilon \rceil| \geq \varepsilon.$ Now we define $D \in \GL_m(F)$ as follows:
\begin{equation}
    D = \diag (a_0^{-1},a_*^{-1},\dots,a_*^{-1},a_1^{-1},\dots,a_n^{-1}),
\end{equation}
where
\begin{equation}\label{equ:a_0_a_1}
    a_0= \frac{X^{-t}}{\lceil \varepsilon \rceil},\,\, a_*=\frac{X^{t'}}{\lceil \varepsilon \rceil},\,\,a_i=\frac{X^{t_i}}{\lceil \varepsilon \rceil},\,\, i=1,\dots,n.
\end{equation}
It can be easily seen that the set within the bracket of left hand side of (\ref{equ:small grad}) is exactly the same as the following set
\begin{equation}
    \{\mathbf{x} \in B : \|DH_{f(\mathbf{x})}\mathbf{v}\| < |\lceil \varepsilon \rceil| \,\, \text{for some} \,\, \mathbf{v} \in \Gamma \setminus \{0\} \}.
\end{equation}
Therefore it is enough to estimate the above set. Considering this, we state the following theorem which completes the proof of Theorem \ref{thm:small-grad}.
\begin{theorem}\label{thm:small grad main}
Let $U$, $\mathbf{x}_0,$ d, n and $f$ be as in Theorem \ref{thm:small-grad}. Take $\Gamma$ as in (\ref{equ:gamma}) and $H_{f(\mathbf{x})}$ as in (\ref{equ:u_x}). Then there exists a neighbourhood $V \subseteq U$ of $\mathbf{x}_0,$ and a positive number $\alpha$ with the following property: for any ball $B \subseteq V$ there exists $E>0$ such that for any diagonal matrix $D = \diag (a_0^{-1},a_*^{-1},\dots,a_*^{-1},a_1^{-1},\dots,a_n^{-1}) \in \GL_m(F)$ with
\begin{enumerate}[label=({{\roman*}})]
    \item $0 < |a_0| \leq 1 \leq |a_1| \leq \dots \leq |a_n|,$ and 
    \item $0 < |a_*| \leq |a_0 a_1 \dots a_{n-1}|^{-1}$
\end{enumerate}
and for any positive number $\varepsilon,$ one has 
\begin{equation}\label{equ:quan_esti_modulo}
    \left|  \{\mathbf{x} \in B : \|DH_{f(\mathbf{x})}\mathbf{v}\| < \varepsilon \,\, \text{for some} \,\, \mathbf{v} \in \Gamma \setminus \{0\} \} \right| \leq E \varepsilon^{\alpha} \left |B \right|.
\end{equation}
\end{theorem}

\begin{proof}[Proof of Theorem \ref{thm:small-grad} modulo Theorem \ref{thm:small grad main}.]  Using a permutation without loss in generality we can assume that $t_1 \leq t_2 \leq \dots\leq t_n.$ Now take $\varepsilon$ as in Theorem \ref{thm:small-grad} and define $a_0,a_*,a_1,\dots,a_n$ as in (\ref{equ:a_0_a_1}). Using the fact that $|\lceil \varepsilon \rceil| \geq \varepsilon$, it is easy to verify that $a_0,a_*,a_1,\dots,a_n$ satisfy the constraints $(i)$ and $(ii)$ of Theorem \ref{thm:small grad main}. Hence applying Theorem \ref{thm:small grad main} in this particular case, we get our desired estimate. \end{proof}  
\subsubsection{Proof of Theorem \ref{thm:small grad main}}
 
First, we recall the ``Quantitative nondivergence theorem'' from \cite{G-JNT}, which is a generalization of \cite[Theorem 6.1]{KT}. To simplify the things coming up, we specialize \cite[Theorem 6.1]{G-JNT} to our setup. \\

Let  $F, \Lambda$ be as in \S\ref{subsection:mainres} and $\Gamma$ be given by \eqref{equ:gamma}. For a $\Lambda$ submodule $\Delta$ of ${F}^m$, ${F}\Delta\defeq \Span_{F}(\Delta)$. Clearly $\Gamma$ is  a  $ \Lambda$ submodule of $F^m$ with rank $n+1$ and given a submodule $\Delta $ of $\Gamma,$  $\Delta$ is said to be primitive in $\Gamma$ if $\Delta={F} \Delta \cap \Gamma$. We denote the set of all nonzero primitive $\Lambda$ submodules of $\Gamma$ by $\mathfrak{P}(\Gamma)$. 
 A function $\|\cdot\|: \bigwedge ({F}^m) \rightarrow \mathbb R_+$ is said to be $\textit{submultiplicative}$ if
 \begin{enumerate} [label=(S.{{\arabic*}})]
     \item it is continuous with respect to the natural topology of $\bigwedge({F}^m)$,
     \item $\|t \mathbf{w}\|=|t|\|\mathbf{w}\|,$  $\forall \mathbf{w} \in \bigwedge({F}^m)$ and $t \in {F}$, and 
     \item for any $\mathbf{v}, \mathbf{w} \in \bigwedge ({F}^m),$ one always has the inequality $\|\mathbf{v} \wedge \mathbf{w}\| \leq \|\mathbf{v}\| \cdot \|\mathbf{w}\|.$ 
 \end{enumerate} Now given a  ball $B=B(\mathbf{x}_0;r_0) \subseteq F^d, $ where $\mathbf{x}_0 \in F^d$ and $r_0>0$, a continuous function $h :  B(\mathbf{x}_0;3^{n+1} r_0) \rightarrow \GL_m({F})$ and a submultiplicative function $\|\cdot \|,$ we consider the function $\phi : \mathfrak{P}(\Gamma) \rightarrow C\left(B(\mathbf{x}_0;3^{n+1} r_0)\right)$ defined as follows: for $\Delta \in \mathfrak{P}(\Gamma),$ $\phi_{\Delta}$ is the map $\mathbf y \rightarrow \|h(\mathbf y) \Delta\|,$ $\forall \mathbf y \in B(\mathbf{x}_0;3^{n+1} r_0).$  Clearly every such $\phi_{\Delta}$ is a continuous function on $B(\mathbf{x}_0;3^{n+1} r_0)$.

\begin{theorem}{\cite[Theorem 5.2]{G-JNT}}
Let ${F},n,m,\Gamma,B=B(\mathbf{x}_0;r_0),\|\cdot\|$ and $\phi$ be as above. Suppose that $C,\alpha >0$ and $\rho \in (0,1]$ are constants such that the following holds:
\begin{enumerate}[label=({{\alph*}})]
    \item $\forall \Delta \in \mathfrak{P}(\Gamma),$ $\phi_{\Delta}$ is $(C,\alpha)$-good on $B(\mathbf{x}_0;3^{n+1} r_0),$
    \item $\forall \Delta \in \mathfrak{P}(\Gamma)$, $\|\phi_{\Delta}\|_{\mu,B} \geq \rho;$ and
    \item $\forall \mathbf{y} \in  B(\mathbf{x}_0;3^{n+1} r_0)$, $\#\{\Delta \in \mathfrak{P}(\Gamma): \|h(\mathbf{y})\Delta\| < \rho\}$ is finite.
\end{enumerate}
Then for any $\varepsilon \leq \rho,$ we have
\begin{equation}\label{equ:quan_easti}
    \left|\{ \mathbf y \in B : \|h(\mathbf y)\gamma\|< \varepsilon, \,\, for \,\, some \,\, \gamma \in \Gamma \setminus \{0\}\}\right| \leq (n+1)C \left(\frac{\varepsilon}{\rho}\right)^{\alpha}  |B|.
\end{equation}
\end{theorem}

We need to define a submultiplicative function $\|\cdot\|,$ a family of functions $\mathcal{H},$ and verify these conditions of the above theorem for our choice of any function $h \in \mathcal{H}.$ We consider
\begin{itemize}
    \item $U,f, \Gamma, H_{f(\mathbf{x})}$ as in Theorem \ref{thm:small grad main} and $m=n+d+1.$ Denote the standard basis of  $F^m$ by $\{\mathbf{e}_0,\mathbf{e}_1^*,\dots,\mathbf{e}_d^*,\mathbf{e}_1,\dots,\mathbf{e}_n\}.$ Let $W^*$ be the $F$-linear span of $\mathbf{e}_1^*,\dots,\mathbf{e}_d^*$ and $W$ be the $F$-linear span of $\mathbf{e}_1,\dots,\mathbf{e}_{n}$
    \item $\mathcal{H}$ be the family of functions $h : U \rightarrow \GL_m(F)$ given by $h(\mathbf{x})=DH_{f(\mathbf{x})},$ where $H_{f(\mathbf{x})}$ is as in (\ref{equ:u_x}) and $D= \text{diag} (a_0^{-1},a_*^{-1},\dots,a_*^{-1},a_1^{-1},\dots,a_n^{-1})$ satisfying  conditions of Theorem \ref{thm:small grad main}.
    \item Now we choose a special submultiplicative function on $\bigwedge(F^m)$ as follows. Let 
    $$\mathcal S \defeq \left\{I \subseteq \{0, *1,\dots, *d, 1,\dots, n\}: \# I \cap \{*1,\dots, *d\} \leq 1 \right\}$$ and $\mathcal S'$ be the complement of $\mathcal S$ in the power set of $\{0, *1,\dots, *d, 1,\dots, n\}$. Clearly, we have the following direct sum decomposition of $\bigwedge(F^m)$ as:  $\Span \{\mathbf{e}_I : I \in \mathcal S\} \bigoplus \Span\{\mathbf{e}_I : I \in \mathcal S'\}$, where for $I=\{*i_1,\dots,*i_k,j_1,\dots,j_{\ell}\},$ $\mathbf{e}_I \defeq \mathbf{e}_{i_1}^{*} \wedge \dots \wedge \mathbf{e}_{i_k}^{*} \wedge \mathbf{e}_{j_1} \wedge \dots \wedge \mathbf{e}_{j_{\ell}}.$  Let $\pi$ be the projection operator to the first summand with respect to this decomposition. Define $\|\cdot\| : \bigwedge(F^m) \rightarrow \mathbb R_+$ as follows: for any $\mathbf{w} \in \bigwedge(F^m),$ $\|\mathbf{w}\|$ is defined as the supremum norm of $\pi(\mathbf{w}).$ In other words, if $\mathbf{w}$ is written as a sum of exterior products of base vectors $\mathbf{e}_i$ and $\mathbf{e}_i^*,$ to compute $\|\mathbf{w}\|$ we ignore components containing $\mathbf{e}_i^* \wedge \mathbf{e}_j^*,$ $1 \leq i \neq j \leq d,$ and take the supremum norm of the rest. It is easy to see that $\|\cdot\|$ is indeed a submultiplicative function on $\bigwedge(F^m).$ 
\end{itemize}
Since the restriction of the above submultiplicative function $\|\cdot\|$ on $F^m$ agrees with the supremum norm, to prove Theorem \ref{thm:small grad main}, it suffices to find a neighbourhood $V$ of $\mathbf{x}_0$ such that 
\begin{enumerate}[label=({{\Alph*}})]
    \item\label{A} there exist $C,\alpha >0$ such that all the functions $\mathbf{x} \mapsto \|h(\mathbf{x}) \Delta \|,$ where $h \in \mathcal{H}$ and $\Delta \in \mathfrak{P}(\Gamma)$ are $(C,\alpha)$-good on $V,$ 
    \item\label{B} for all $\mathbf{x} \in V$ and $h \in \mathcal{H}$ one has \#$\{\Delta \in \mathfrak{P}(\Gamma) : \|h(\mathbf{x}) \Delta  \| \leq 1 \} < \infty,$ and
    \item\label{C}for every ball $B \subseteq V,$ there exists $\rho >0$ such that $\displaystyle
    \sup_{\mathbf{x} \in B} \|h(\mathbf{x}) \Delta\| \geq \rho$ for all $h \in \mathcal{H}$ and $\Delta \in \mathfrak{P}(\Gamma)$.
\end{enumerate}
We define the notion of orthonormality of vectors similar to that of \cite{MG1} in the following.  
\begin{definition}
   Let $\ell \in \mathbb N$ with $\ell \leq m.$ A set of vectors $\mathbf{x}_1,\dots,\mathbf{x}_{\ell}$ in $F^m,$ is called \emph{orthonormal} if $\|\mathbf{x}_1\|=\dots=\|\mathbf{x}_{\ell}\|=\|\mathbf{x}_1 \wedge \dots \wedge \mathbf{x}_{\ell}\|=1$.\end{definition}
\begin{remark}$\ $
\begin{enumerate}[label=({{\roman*}})]
    \item The above definition of orthonormality is made in view of the geometric phenomenon in the Euclidean space that the volume of a parallelepiped equals to the product of the lengths of its sides if and only if it is rectangular, i.e., sides are mutually orthogonal. 
    \item \label{equiv-ortho}Note that the notion of orthonormality can also be defined purely algebraically as follows. Consider the vectors $\mathbf{x}_1+\mathcal{P}^m, \dots ,\mathbf{x}_{\ell} + \mathcal{P}^m$ in $\mathscr{O}^m/\mathcal{P}^m \cong \mathbb{F}_q^m$, where $\mathcal{P}=\{x\in F :|x|<1\}$ is the unique maximal ideal inside the ring of integers $\mathscr{O}$. Then $\mathbf{x}_1,\dots,\mathbf{x}_{\ell}$ is orthonormal if and only if $\mathbf{x}_1+\mathcal{P}^m, \dots ,\mathbf{x}_{\ell} + \mathcal{P}^m$ are linearly independent in the vector space  $\mathscr{O}^m/\mathcal{P}^m$ over $\mathbb{F}_q$. 
    \item In view of the equivalent definition mentioned above in \ref{equiv-ortho}, it is clear that any orthonormal subset of $F^m $ can be extended to an orthonormal basis of $F^m$.
\end{enumerate}
\end{remark}
We now verify conditions \ref{A}-\ref{C} which will finish the proof of the Theorem \ref{thm:small grad main}. We follow the approach of \cite{BKM} and \cite{MG1}. \\

$\bullet$ \textbf{Checking \ref{A}:} Let $\text{rank}_{\Lambda} \Delta=k. $ The case $k=0$ is trivial, so we take $1 \leq k \leq n+1.$ Also let $D\Delta_{F}\defeq \Span_F D\Delta$.  Clearly, $\dim D\Delta_{F}=k.$ Now we choose an orthonormal set of vectors $\mathbf{v}_1,\dots,\mathbf{v}_{k-1} \in D\Delta_{F} \,\, \cap W $. Observe that, if $\mathbf{e}_0\in D\Delta_F$ then $\{\mathbf{e}_0, \mathbf{v}_1, \dots, \mathbf{v}_{k-1}\}$ is an orthonormal basis of $D\Delta_F$. If $\mathbf{e}_0\notin D\Delta_F$ then we extend $\{\mathbf{v}_1,\dots, \mathbf{v}_{k-1}\}$ to an orthonormal basis of $D\Delta_F\oplus F\mathbf{e}_0$ of the form $\{\mathbf{e}_0,\mathbf{v}_0, \mathbf{v}_1, \dots, \mathbf{v}_{k-1}\}$. Then  a standard argument using orthonormality yields a vector $\alpha\mathbf{e}_0+\beta\mathbf{v}_0\in D\Delta_F$ such that $\{\alpha\mathbf{e}_0+\beta \mathbf{v}_0, \mathbf{v}_1, \dots, \mathbf{v}_{k-1}\}$ is an orthonormal basis of $D\Delta_F$. Thus in either case (if $\mathbf{e}_0\in D\Delta_F$, we interpret $\mathbf{v}_0 $ as $0$) we see that any for any $\mathbf{w} \in \bigwedge^k(F^m)$ representing $\Delta $, one has 
\begin{equation}\label{equ:delta-basis}
    D\mathbf{w}= a \mathbf{e}_0 \wedge \mathbf{v}_1 \wedge \dots \wedge \mathbf{v}_{k-1} + b \mathbf{v}_0 \wedge \mathbf{v}_1 \wedge \dots \wedge  \mathbf{v}_{k-1}
\end{equation}
for some $a,b \in F.$ We write
\begin{equation}
    h(\mathbf{x})\mathbf{w}= DH_{f(\mathbf{x})}D^{-1}(D\mathbf{w})
\end{equation}
and define the $m$-tuple of functions 
\begin{equation}
    \hat{f}(\mathbf{x})=\left(1,0,\dots,0,\frac{a_1}{a_0}f_1(\mathbf{x}),\dots, \frac{a_n}{a_0}f_n(\mathbf{x})\right)
\end{equation}
Let us define $\nabla^* \bar{f}(\mathbf{x})= \sum_{i=1}^{d} \partial_i \bar{f}(\mathbf{x}) \mathbf{e}_i^*,$ for any $C^1$ function $\bar{f}$ from an open subset of $F^d$ to $F.$ Also let $g_1$, $g_2$ are two functions from an open subset of $F^d$ to $F,$ and for a map $g=(g_1,g_2),$ we define $\tilde{\nabla}^*g $ by
\begin{equation*}
    \tilde{\nabla}^*g(\mathbf{x})= g_1(\mathbf x) \nabla^* g_2(\mathbf x) - g_2(\mathbf x) \nabla^* g_1(\mathbf{x}).
\end{equation*}
Then it is easy to see that $DH_{f(\mathbf{x})}D^{-1} \mathbf{e}_0=\mathbf{e}_0$ and
\begin{equation*}
    DH_{f(\mathbf{x})}D^{-1} \mathbf{v}= \mathbf{v} + \left(\hat{f}(\mathbf{x}) \mathbf{v}\right) \mathbf{e}_0 + \frac{a_0}{a_*} \nabla^* (\hat{f}(\mathbf{x}) \mathbf{v})
\end{equation*}
whenever $\mathbf{v}$ is in $W.$

\begin{equation*}
    DH_{f(\mathbf{x})}D^{-1}(\mathbf{e}_0 \wedge \mathbf{v}_1 \wedge \dots \wedge \mathbf{v}_{k-1} )= \mathbf{e}_0 \wedge \mathbf{v}_1 \wedge \dots \wedge \mathbf{v}_{k-1} + \frac{a_0}{a_*} \sum_{i=1}^{k-1} \left(\pm \nabla^* (\hat{f}(\mathbf{x})\mathbf{v}_i) \mathbf{e}_0 \wedge \bigwedge_{s \neq i} \mathbf{v}_s\right) + \mathbf{w}_1^*
\end{equation*}
and 
\begin{align*}
    DH_{f(\mathbf{x})}D^{-1}(\mathbf{v}_0 \wedge \mathbf{v}_1 \wedge \dots \wedge \mathbf{v}_{k-1} )&=\mathbf{v}_0 \wedge  \dots \wedge \mathbf{v}_{k-1} + \displaystyle \sum_{i=1}^{k-1} \left(\pm  (\hat{f}(\mathbf{x})\mathbf{v}_i) \mathbf{e}_0 \wedge \bigwedge_{s \neq i} \mathbf{v}_s\right)\\ &+\frac{a_0}{a_*} \displaystyle \sum_{i=1}^{k-1} \left(\pm \nabla^* (\hat{f}(\mathbf{x})\mathbf{v}_i)  \wedge \bigwedge_{s \neq i} \mathbf{v}_s \right) \\& + \frac{a_0}{a_*} \displaystyle \sum_{0\leq i<j\leq k-1} \left(\pm \tilde{\nabla}^* \left(\hat{f}(\mathbf{x})\mathbf{v}_i,\hat{f}(\mathbf{x})\mathbf{v}_j \right) \wedge \mathbf{e}_0 \wedge \bigwedge_{s \neq i,j} \mathbf{v}_s \right)+ \mathbf{w}_2^*,
\end{align*}
where $\mathbf{w}_1^*$ and $\mathbf{w}_1^*$ belong to $\bigwedge^2 (W^*) \wedge \bigwedge (W).$ Therefore, we have
\begin{eqnarray}\label{equ:pihxw}
   \pi(h(\mathbf{x}) \mathbf{w}) & = &  (a+b \hat{f}(\mathbf{x})\mathbf{v}_0)\,\, \mathbf{e}_0 \wedge \mathbf{v}_1 \wedge \dots \wedge \mathbf{v}_{k-1} + b \,\, \mathbf{v}_0 \wedge \mathbf{v}_1 \wedge \dots \wedge  \mathbf{v}_{k-1} \nonumber \\ [2ex]
   & + &   b \displaystyle \sum_{i=1}^{k-1} \left(\pm  (\hat{f}(\mathbf{x})\mathbf{v}_i) \mathbf{e}_0 \wedge \bigwedge_{s \neq i} \mathbf{v}_s \right) + b \frac{a_0}{a_*} \displaystyle \sum_{i=0}^{k-1} \left(\pm \nabla^* (\hat{f}(\mathbf{x})\mathbf{v}_i)  \wedge \bigwedge_{s \neq i} \mathbf{v}_s \right)\nonumber \\ [2ex]
   & + & \frac{a_0}{a_*} \displaystyle \sum_{i=1}^{k-1} \left(\pm \tilde{\nabla}^* \left(\hat{f}(\mathbf{x})\mathbf{v}_i,a + b \hat{f}(\mathbf{x})\mathbf{v}_0 \right) \wedge \mathbf{e}_0 \wedge \bigwedge_{s \neq 0,i} \mathbf{v}_s  \right)\nonumber \\ [2ex]
   & + & b \frac{a_0}{a_*} \displaystyle \sum_{1 \leq i<j \leq k-1 } \left(\pm \tilde{\nabla}^* \left(\hat{f}(\mathbf{x})\mathbf{v}_i,\hat{f}(\mathbf{x})\mathbf{v}_j \right) \wedge \mathbf{e}_0 \wedge \bigwedge_{s \neq i,j} \mathbf{v}_s \right)
\end{eqnarray}
We are now ready to prove condition \ref{A}. Since the supremum of a family of $ (C,\alpha) $-good functions is again a $(C, \alpha)$-good function, it suffices to show that the norms of each of these summands of (\ref{equ:pihxw})  are $(C,\alpha)$-good functions. Note that the components in the first two lines of (\ref{equ:pihxw}) are linear combination of either $1,f_1,\dots,f_n$ or $\nabla^* f_1,\dots,\nabla^*f_n,$ and so in view of Corollary \ref{coro:c-alpha-grad-homo}, we can find a neighbourhood $V_1$ of $\mathbf{x}_0,$ $C_1$ and $\alpha_1 >0$ such that all these norms of the first two lines are $(C_1,\alpha_1)$-good on $V_1.$ Now by applying Theorem \ref{thm:skew-calpha}, we get a neighbourhood $V_2,$ $C_2$ and $\alpha_2>0$ such that the norms of the remaining part of (\ref{equ:pihxw})  are $(C_2,\alpha_2)$-good on $V_2.$ Now combining these two and applying Lemma \ref{lem:C,alpha}, we get our desired condition \ref{A}.\\

$\bullet$ \textbf{Checking \ref{B}:} In view of the first line of (\ref{equ:pihxw}), we get that
\begin{equation*}
    \| h(\mathbf{x})\Delta \| \geq \max (|a+b \hat{f}(\mathbf{x}) \mathbf{v}_0|,|b|).
\end{equation*}
Thus $\| h(\mathbf{x}) \Delta \| \leq 1$ implies that $|b| \leq 1$ and $|a| \leq 1+ \|\hat{f}(\mathbf{x})\|$. From \eqref{equ:delta-basis}, we have $\|D\mathbf{w}\|=\max\{|a|,|b|\}.$ Now in view of the discreteness of $\bigwedge (D\Gamma)$ in $\bigwedge(F^m),$  it is easy to see that  given any $M>0$ the set of  $\Delta \in \mathfrak{P}(\Gamma)$ such that both $|a|$ and $|b|$ are bounded above by $M$ is finite. Hence we conclude that condition \ref{B} holds.\\

$\bullet$ \textbf{Checking \ref{C}:} Let $V$ be the neighbourhood of $\mathbf{x}_0$ given by Theorem \ref{thm:skew-calpha} and $B \subseteq V$ be a ball containing $\mathbf{x}_0.$ Now we define $\rho_1, \rho_2, \rho_3, M $ as follows:
\begin{equation*}
    \rho_1 = \inf \{ |f(\mathbf{x}) \mathbf{v} + v_0| \,\,  : \,\, \mathbf{x} \in B,\,\, \mathbf{v} \in F^n, \,\, v_0 \in F, \,\, \|\mathbf{v}\|=1   \},
\end{equation*}
\begin{equation*}
    \rho_2 = \inf \left\{ \sup_{\mathbf{x}\in B}\|\nabla f(\mathbf{x}) \mathbf{v}\| \,\, : \,\, \mathbf{v} \in F^n,  \,\, \|\mathbf{v}\|=1  \right\}
\end{equation*}
and $\rho_3 >0$ given by Theorem \ref{thm:skew-calpha}(a), and 
\begin{equation*}
    M \defeq \max \,\, ( \,\, \sup_{\mathbf{x} \in B} \|f(\mathbf{x})\|,\,\,  \sup_{\mathbf{x} \in B} \|\nabla f(\mathbf{x})\| \,\,  )
\end{equation*}
From the linear independence of the functions $1,f_1, \dots, f_n$, it follows that $\rho_1 >0$.

First we consider the case $\text{rank}_{\Lambda} \Delta =1. $ Then $\Delta$ can be represented by a vector $$\mathbf{v}=(v_0,0,\dots,0,v_1,\dots,v_n)^T,$$ with $v_i \in \Lambda$ for $i=0,1,\dots,n.$ Note that the first coordinate of $h(\mathbf x) \mathbf{v}$ is equal to
\begin{equation*}
    \frac{1}{a_0}(v_0 + v_1 f_1 (\mathbf{x}) + \dots + v_n f_n (\mathbf{x}) )
\end{equation*}
This implies $\|h(\mathbf{x})\Delta\|\geq \rho_1,$ since $|a_0| \leq 1.$\\

Now assume that $\text{rank}_{F} \Delta =k>1.$ Let $\mathbf{v}_1,\dots,\mathbf{v}_{k-2}$ be an orthonormal set in $(W \setminus F\mathbf{e}_n) \cap \Delta_{F},$ where $\Delta_{F}$ is the $F$ span of $\Delta.$ Now by adding $\mathbf{v}_{k-1},$ we extend $\{\mathbf{v}_1,\dots,\mathbf{v}_{k-2}\}$ to an orthonormal set in $W \cap \Delta_{F}.$ Now, if necessary choose a vector $\mathbf{v}_0$ to complete $\{\mathbf{e}_0,\mathbf{v}_1,\dots,\mathbf{v}_{k-1}\}$ to an orthonormal basis of $\Delta_{F} \oplus F \mathbf{e}_0.$ Now similar to the earlier discussion and equation (\ref{equ:delta-basis}), we will represent $\Delta$ by $\mathbf{w} \in \bigwedge^k(F^m)$ of the form
\begin{equation}\label{equ:w}
    \mathbf{w}= a \mathbf{e}_0 \wedge \mathbf{v}_1 \wedge \dots \wedge \mathbf{v}_{k-1} + b \mathbf{v}_0 \wedge \mathbf{v}_1 \wedge \dots \wedge  \mathbf{v}_{k-1}
\end{equation}
for some $a,b \in F$ with $\max \{|a|,|b|\} \geq 1.$ Since $D$ leaves $W ,$ $W^*$ and $F \mathbf{e}_0$  invariant, we have
\begin{equation*}
    \|DH_{f(\mathbf{x})} \Delta \|= \|DH_{f(\mathbf{x})} \mathbf{w}\|= \|\pi(DH_{f(\mathbf{x})} \mathbf{w})\|=\|D\pi(H_{f(\mathbf{x})} \mathbf{w})\|
\end{equation*}

Now consider the $m$-tuple of functions
\begin{equation*}
    \tilde{f}(\mathbf{x})\defeq(1,0,\dots,0,f_1(\mathbf{x}),\dots,f_n(\mathbf{x})),
\end{equation*}
and note that $H_{f(\mathbf{x})} \mathbf{e}_0=\mathbf{e}_0$ and
\begin{equation*}
    H_{f(\mathbf{x})} \mathbf{v}= \mathbf{v} + (\tilde{f}(\mathbf{x})\mathbf{v}) \mathbf{e}_0 + \nabla^*(\tilde{f}(\mathbf{x}) \mathbf{v})
\end{equation*}
whenever $\mathbf{v}$ is in $W.$ Using this and (\ref{equ:w}), we also get
\begin{eqnarray*}
   \pi(H_{f(\mathbf{x})} \mathbf{w}) & = &  (a+b (\tilde{f}(\mathbf{x})\mathbf{v}_0))\,\, \mathbf{e}_0 \wedge \mathbf{v}_1 \wedge \dots \wedge \mathbf{v}_{k-1} + b \,\, \mathbf{v}_0 \wedge \mathbf{v}_1 \wedge \dots \wedge  \mathbf{v}_{k-1} \nonumber \\ [2ex]
   & + &   b \displaystyle \sum_{i=1}^{k-1} \left(\pm  (\tilde{f}(\mathbf{x})\mathbf{v}_i) \mathbf{e}_0 \wedge \bigwedge_{s \neq i} \mathbf{v}_s \right) + b \displaystyle \sum_{i=0}^{k-1} \left(\pm \nabla^* (\tilde{f}(\mathbf{x})\mathbf{v}_i)  \wedge \bigwedge_{s \neq i} \mathbf{v}_s \right)+ \mathbf{e_0} \wedge \tilde{\mathbf{w}}(\mathbf{x}),  
\end{eqnarray*}
where 
\begin{eqnarray*}
  \tilde{\mathbf{w}}(\mathbf{x}) & = &  \displaystyle \sum_{i=1}^{k-1} \left(\pm \tilde{\nabla}^* \left(\tilde{f}(\mathbf{x})\mathbf{v}_i,a + b \tilde{f}(\mathbf{x})\mathbf{v}_0 \right)  \wedge \bigwedge_{s \neq 0,i} \mathbf{v}_s \right) \nonumber \\ &+& b \displaystyle \sum_{1 \leq i<j \leq k-1} \left(\pm \tilde{\nabla}^* \left(\tilde{f}(\mathbf{x})\mathbf{v}_i,\tilde{f}(\mathbf{x})\mathbf{v}_j \right) \wedge \bigwedge_{s \neq i,j} \mathbf{v}_s \right) .
\end{eqnarray*}
To prove that $\|h(\mathbf{x}) \mathbf{w}\|$ is not less than $\rho$ for some $\mathbf{x} \in B,$ it is enough to show that $\sup_{\mathbf{x} \in B} \|D  \tilde{\mathbf{w}}(\mathbf{x}) \|$
is not less than $\rho |a_0|.$ 

Now consider the product $\mathbf{e}_n \wedge  \tilde{\mathbf{w}}(\mathbf{x}).$ We claim that it is enough to show that
\begin{equation}\label{equ:tilde_w}
    \|\mathbf{e}_n \wedge  \tilde{\mathbf{w}}(\mathbf{x})\| \geq \rho \,\, \text{for some} \,\,  \mathbf{x} \in B.
\end{equation}
First, we finish the proof assuming (\ref{equ:tilde_w}), and then we will prove (\ref{equ:tilde_w}). Now since $\mathbf{e}_n$ is an eigenvector of $D$ with eigenvalue $a_n^{-1},$ we have $\|D(\mathbf{e}_n \wedge  \tilde{\mathbf{w}}(\mathbf{x}))\| \leq |a_n|^{-1} \|D\tilde{\mathbf{w}}(\mathbf{x})\|.$ Also since the eigenvalue with the smallest norm of $D$ on $W^* \wedge (\bigwedge^{k-2}(W \oplus F \mathbf{e}_0))$ is equal to $(a_* a_{n-k+1} \dots a_n)^{-1},$ one has
\begin{eqnarray*}
   \|D \tilde{\mathbf{w}}(\mathbf{x})\| & \geq & |a_n| \|D(\mathbf{e}_n \wedge  \tilde{\mathbf{w}}(\mathbf{x}))\| \\ [2ex]
   & \geq & \frac{|a_n|}{|a_* a_{n-k+1} \dots a_n|} \|\mathbf{e}_n \wedge \tilde{\mathbf{w}}(\mathbf{x})  \| \\ [2ex]
   & \geq & \frac{|a_0| \rho}{|a_0 a_* a_1 \dots a_{n-1}|} \geq |a_0| \rho.
\end{eqnarray*}
Thus it remains to prove (\ref{equ:tilde_w}). To prove this we consider the term containing $\mathbf{v}_1 \wedge \dots \wedge \mathbf{v}_{k-2},$ and then we have
\begin{eqnarray}
   \mathbf{e}_n \wedge \tilde{\mathbf{w}}(\mathbf{x}) & = & \pm \mathbf{v}^*(\mathbf{x}) \wedge \mathbf{e}_n \wedge \mathbf{v}_1 \wedge \dots \wedge \mathbf{v}_{k-2} \nonumber \\ [2ex] & + & \text{other terms where one or two } \mathbf{v}_i \,\,  \text{are missing},
\end{eqnarray}
where
\begin{equation*}
   \mathbf{v}^*(\mathbf{x}) = \tilde{\nabla}^* (\tilde{f}(\mathbf{x})\mathbf{v}_{k-1},a + b \tilde{f}(\mathbf{x})\mathbf{v}_0 ) = b \tilde{\nabla}^* (\tilde{f}(\mathbf{x})\mathbf{v}_{k-1},  \tilde{f}(\mathbf{x})\mathbf{v}_0 ) - a {\nabla}^* (\tilde{f}(\mathbf{x})\mathbf{v}_{k-1} )
\end{equation*}
Note that $\| \mathbf{e}_n \wedge \tilde{\mathbf{w}}(\mathbf{x}) \| \geq \| \mathbf{v}^*(\mathbf{x})\|.$ Now using the first expression for $\mathbf{v}^*(\mathbf{x}),$ we get $\sup_{\mathbf{x} \in B} \|\mathbf{v}^*(\mathbf{x})\| \geq \rho_3 |b|$ and the second expression gives  $\sup_{\mathbf{x} \in B} \|\mathbf{v}^*(\mathbf{x})\| \geq \rho_2 |a| - 2 M^2 |b|.$ It is easy to see that there exists $\rho_0$ such that 
\begin{equation*}
    \max \{\rho_2 |a|-2M^2 |b|, \rho_3 |b|\} \geq \rho_0 \cdot \max \{|a|,|b|\} \geq \rho_0.
\end{equation*}
Thus $\rho=\rho_0$ satisfies condition \ref{C} and this completes the proof of Theorem \ref{thm:small grad main}, and hence of Theorem \ref{thm:small-grad}.
 
\begin{remark}
    Note that the $\rho$ appearing in the condition \ref{C} needs to be independent of the family  $\mathcal{H}$ in order to have \eqref{equ:quan_esti_modulo}  for all diagonal matrix $D$ (satisfying certain conditions).  Hence we need the $\rho$ to be uniform in \eqref{equ:quan_easti}.
\end{remark}

\section{Inhomogeneous convergence case}\label{section:inhomoconv}
Combining the previous estimates, namely,  Theorem \ref{thm:big-grad} and \ref{thm:small-grad}, we now prove the convergence case of the Khitchine-Groshev theorem in the inhomogeneous arena. This amounts to estimating the size of analogous inhomogeneous big gradient and small gradient sets.\\

\noindent Recall that,
\begin{equation*}
    \mathcal W_f(\Psi,\theta) = \{\mathbf{x} \in U : f(\mathbf{x}) \,\, \text{is} \,\, (\Psi,\theta)-\text{approximable} \} 
\end{equation*}
We write $\mathcal W_f(\Psi,\theta)$ as
\begin{equation*}
    \mathcal W_f(\Psi,\theta)= \displaystyle \limsup_{\mathbf{a} \rightarrow \infty} \mathcal W_f(\mathbf{a},\Psi,\theta),
\end{equation*}
where 
for $\mathbf{a} \in \Lambda^n$
\begin{equation*}
    \mathcal W_f(\mathbf{a},\Psi,\theta)\defeq \{ \mathbf{x} \in U : |a_0 + \mathbf{a} \cdot f(\mathbf{x})+ \theta(\mathbf{x})| < \Psi(\mathbf{a}) \,\, \text{for some } \,\, a_0 \in \Lambda \}.
\end{equation*}
 For each $\mathbf{a} \in \Lambda^n \setminus \{0\},$ we decompose the set $ \mathcal W_f(\mathbf{a},\Psi,\theta)$ into two subsets based on `big gradient' and `small gradient' as follows:
 \begin{equation}\label{equ:w_large_f_a}
      \mathcal W_f^{\text{big}}(\mathbf{a},\Psi,\theta) \defeq \left\{\mathbf{x} \in  \mathcal W_f(\mathbf{a},\Psi,\theta) : \|\nabla (f(\mathbf{x}) \cdot \mathbf{a} + \theta(\mathbf{x}))\| \geq \|\mathbf{a}\|^{1-\varepsilon} \right \}
 \end{equation}
and 
\begin{equation}\label{equ:w_small_f_a}
      \mathcal W_f^{\text{small}}(\mathbf{a},\Psi,\theta) \defeq \left\{\mathbf{x} \in  \mathcal W_f(\mathbf{a},\Psi,\theta) : \|\nabla (f(\mathbf{x}) \cdot \mathbf{a} + \theta(\mathbf{x}))\| < \|\mathbf{a}\|^{1-\varepsilon} \right \},
 \end{equation}
 where $0 < \varepsilon < 1/2$ is a fixed number.

\noindent Now it is easy to see that
\begin{equation*}
     \mathcal W_f(\Psi,\theta) =  \mathcal W_f^{\text{big}}(\Psi,\theta) \cup  \mathcal W_f^{\text{small}}(\Psi,\theta),
\end{equation*}
where $\mathcal W_f^{\text{big}}(\Psi,\theta) = \displaystyle \limsup_{\mathbf{a} \rightarrow \infty }  \mathcal W_f^{\text{big}}(\mathbf{a},\Psi,\theta)$  and $\mathcal W_f^{\text{small}}(\Psi,\theta) = \displaystyle\limsup_{\mathbf{a} \rightarrow \infty }  \mathcal W_f^{\text{small}}(\mathbf{a},\Psi,\theta).$ To prove our desired statement $|\mathcal W_f(\Psi,\theta)|=0$ (i.e, Theorem \ref{thm:inhomo-conv}), it is enough to show that $|\mathcal W_f^{\text{big}}(\Psi,\theta)|=0$ and $|\mathcal W_f^{\text{small}}(\Psi,\theta)|=0.$ We consider these two cases separately.

\subsection{Big gradient} This subsection is devoted to show that $|\mathcal W_f^{\text{big}}(\Psi,\theta)|=0$. For this, we first prove the following extension of Theorem \ref{thm:big-grad} to the inhomogeneous case. 
\begin{proposition}\label{cor:big_grad_inhomo}
Suppose $U$ is an open subset of $F^d$ and $f: U  \rightarrow F^n$ satisfies \ref{II}, \ref{III}, \ref{IV}, and $\theta: U \rightarrow F$ satisfy condition \ref{VI}. Let $t_1,\dots,t_n \in \mathbb{Z}_{\geq 0}$ and $\mathcal{A}_{\theta}$ be
\begin{equation}\label{big_grad_inhomo}
 \left \{\mathbf{x}\in U: \exists \,\, \mathbf{a}=(a_1,\dots,a_n) \in \Lambda^n, a_0 \in \Lambda \,\, \text{ s.t.}
  \left|
\begin{array}{lcl}
\left| f(\mathbf{x}) \cdot \mathbf{a} + \theta(\mathbf{x}) +a_0 \right| < \delta  q^{-(\sum_1^n t_i)} \\[2ex]
\|\nabla ( f(\mathbf{x}) \cdot \mathbf{a} + \theta(\mathbf{x}) ) \|\geq \|\mathbf{a}\|^{1-\varepsilon} \\[2ex]  
|a_i|=q^{t_i}, \,\, \,\,  i=1,\dots,n
\end{array}\right.\right\}.
\end{equation} 
Then $|\mathcal{A}_{\theta}| < C \delta |U|,$ for a universal constant $C>0$.
\end{proposition}
\begin{proof}
Let $g=(f_1,\dots,f_n,\theta): U \rightarrow F^{n+1}$ and $\mathbf{a}'=(a_1,\dots,a_n,1) \in \Lambda^{n+1}.$ Note that $g$ satisfies the hypothesis of Theorem \ref{thm:big-grad}. Therefore by applying Theorem \ref{thm:big-grad} for the function $g,$ we get that 
\begin{equation*}
|\mathcal{A}_{\theta}| < C \delta |U|,
\end{equation*}
for a universal constant $C>0$.
\end{proof}

Now in particular apply Prosition \ref{cor:big_grad_inhomo} for $\delta = q^{\sum_1^n t_i} \Psi(X^{t_1},\dots,X^{t_n}),$ where $t_i$'s are same as in the above proposition. Since $|a_i|= q^{t_i},$ $\Psi (\mathbf{a}) \geq \Psi(X^{t_1 +1},\dots,X^{t_n+1})$ and $\Psi (\mathbf{a}) \leq \Psi(X^{t_1 },\dots,X^{t_n}).$ Therefore using Proposition \ref{cor:big_grad_inhomo}, we have
\begin{equation}\label{equ:w_large_a}
   \left| \bigcup_{|a_i|=q^{t_i}} \mathcal W_f^{\text{big}}(\mathbf{a},\Psi,\theta) \right | < C q^{\sum_1^n t_i} \Psi(X^{t_1},\dots,X^{t_n}) |U|.
\end{equation}
Note that 
    $\displaystyle \displaystyle \sum_{t_1,\dots,t_n}  \Psi(X^{t_1},\dots,X^{t_n}) q^{\sum_1^n t_i}\leq \sum_{\mathbf{a}} \Psi(\mathbf{a}) <\infty$. By (\ref{equ:w_large_a}) and  the Borel-Cantelli lemma we now get our desired result $|W_f^{\text{large}}(\Psi,\theta)|=0.$

\subsection{Inhomogeneous transference principle} Here we first recall  the inhomogeneous transference principle from \cite[Section 5]{BV}. Suppose that $(X,d)$ is a locally compact metric space. Given two countable indexing set $\mathcal{A}$ and $\mathbf{T},$ let $H$ and $I$ be two maps from $\mathbf{T} \times \mathcal{A} \times \mathbb{R}_+$ into the set of open subsets of $X$ such that
\begin{equation}\label{equ:H}
    H \,\, : \,\,  (\mathbf{t},\alpha, \lambda) \in \mathbf{T} \times \mathcal{A} \times \mathbb{R}_+ \rightarrow H_{\mathbf{t}}(\alpha,\lambda)
\end{equation}
and
\begin{equation}\label{equ:I}
     I \,\, : \,\, (\mathbf{t},\alpha, \lambda) \in \mathbf{T} \times \mathcal{A} \times \mathbb{R}_+ \rightarrow I_{\mathbf{t}}(\alpha,\lambda).
\end{equation}
Furthermore, let
\begin{equation}\label{equ:H_t_I_t}
    H_{\mathbf{t}}(\lambda)\defeq \bigcup_{\alpha \in \mathcal{A}} H_{\mathbf{t}}(\alpha,\lambda) \quad \text{and} \quad I_{\mathbf{t}}(\lambda)\defeq  \bigcup_{\alpha \in \mathcal{A}} I_{\mathbf{t}}(\alpha,\lambda).
\end{equation}
Let $\Phi$ denote a set of functions $\phi : \mathbf{T} \rightarrow \mathbb{R}_+.$ For $\phi \in \Phi,$ consider the limsup sets
\begin{equation}\label{equ:lambda_H_I}
    \Lambda_H(\phi) = \displaystyle \limsup_{\mathbf{t} \in \mathbf{T}} H_{\mathbf{t}}(\phi(\mathbf{t})) \quad \text{and} \quad \Lambda_I(\phi) = \displaystyle \limsup_{\mathbf{t} \in \mathbf{T}} I_{\mathbf{t}}(\phi({\mathbf{t}})).
\end{equation}
We call the sets associated with the map $H$ as homogeneous sets and those associated with the map $I$ as inhomogeneous sets. Now we discuss two important properties, which are the key ingredients for the inhomogeneous transference principle. \\

\noindent$\textbf{The intersection property.} $ We say that the triple $(H,I,\Phi)$  satisfy the \textit{intersection property} if for any $\phi \in \Phi,$ there exists $\phi^* \in \Phi$ such that for all but finitely many $\mathbf{t} \in T$ and all distinct $\alpha$ and $\alpha'$ in $\mathcal{A},$ we have that
\begin{equation}\label{equ:inter_property}
    I_{\mathbf{t}}(\alpha,\phi(\mathbf{t})) \cap I_{\mathbf{t}}(\alpha',\phi(\mathbf{t})) \subseteq H_{\mathbf{t}}(\phi^*(\mathbf{t})).
\end{equation}

\noindent $\textbf{The contraction property.}$ We suppose that $\mu$ is a non-atomic finite doubling measure supported on a bounded subset $\mathbf{S}$ of $X.$ We say that $\mu $  is doubling if there exists a constant $\lambda >1$ such that $\mu(B(\mathbf{x};2r)) \leq \lambda \mu(B(\mathbf{x};r))$ for every ball $B(\mathbf{x};r)$ with $\mathbf x \in \mathbf{S}$ and $r>0.$

\noindent  $\mu$ is said to be $\textit{contracting with respect}$ to $(I, \Phi)$ if for any $\phi \in \Phi$ there exists $\phi^+ \in \Phi$ and a sequence of positive numbers $\{ k_{\mathbf{t}} \}_{\mathbf{t} \in \mathbf{T}}$ satisfying
\begin{equation}
    \displaystyle \sum_{\mathbf{t} \in \mathbf{T}} k_{\mathbf{t}} < \infty,
\end{equation}
and for all but finitely many $\mathbf{t} \in \mathbf{T}$ and all $\alpha \in \mathcal{A}$ there exists a collection $C_{\mathbf{t},\alpha}$ of balls $B=B(\mathbf x;r)$ with  $\mathbf x \in \mathbf{S}$ satisfying the following conditions:
\begin{equation}\label{equ:cont_prop_i}
    \mathbf{S} \cap I_{\mathbf{t}}(\alpha,\phi({\mathbf{t}})) \subseteq \bigcup_{B \in C_{\mathbf{t},\alpha}} B,
\end{equation}
\begin{equation}\label{equ:cont_prop_ii}
    \mathbf{S} \cap \bigcup_{B \in C_{\mathbf{t},\alpha}} B \subseteq I_{\mathbf{t}}(\alpha,\phi^+(\mathbf{t}))
\end{equation}
and
\begin{equation}\label{equ:cont_prop_iii}
    \mu(B(\mathbf x;5r) \cap I_{\mathbf{t}}(\alpha,\phi(\mathbf{t}))) \leq k_{\mathbf{t}} \mu(B(\mathbf x;5r)).
\end{equation}
We now recall the following transference theorem from \cite[Theorem 5]{BV}.
\begin{theorem}\label{thm:transf}
Let the triple $(H,I,\Phi)$ satisfies the intersection property and $\mu$ is contracting with respect to $(I,\Phi).$ Then
\begin{equation}
    \mu(\Lambda_H(\phi))=0 \,\, \,\, \forall \,\, \,\, \phi \in \Phi \quad \implies \quad \mu(\Lambda_I(\phi))=0 \,\, \,\, \forall \,\, \,\, \phi \in \Phi.
\end{equation} 
\end{theorem}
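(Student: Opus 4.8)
The plan is to fix $\phi\in\Phi$, assume $\mu(\Lambda_H(\phi'))=0$ for every $\phi'\in\Phi$, and deduce $\mu(\Lambda_I(\phi))=0$. Since $\mu$ is supported on the bounded set $\mathbf{S}$ it is enough to bound $\mu(\mathbf{S}\cap\Lambda_I(\phi))$. A point $x$ in this set lies in $I_{\mathbf{t}}(\phi(\mathbf{t}))=\bigcup_{\alpha\in\mathcal{A}}I_{\mathbf{t}}(\alpha,\phi(\mathbf{t}))$ for infinitely many $\mathbf{t}\in\mathbf{T}$, and for each such $\mathbf{t}$ it lies either in at least two of the sets $I_{\mathbf{t}}(\alpha,\phi(\mathbf{t}))$, or in exactly one of them. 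Writing $\Lambda_I^{(1)}(\phi)$ (resp. $\Lambda_I^{(2)}(\phi)$) for the set of $x\in\mathbf{S}$ exhibiting the first (resp. the second) alternative for infinitely many $\mathbf{t}$, we obtain $\mathbf{S}\cap\Lambda_I(\phi)\subseteq\Lambda_I^{(1)}(\phi)\cup\Lambda_I^{(2)}(\phi)$, and I would bound the two pieces separately.

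\textbf{The overlapping part.} Apply the intersection property to $\phi$ to get $\phi^{*}\in\Phi$ with $I_{\mathbf{t}}(\alpha,\phi(\mathbf{t}))\cap I_{\mathbf{t}}(\alpha',\phi(\mathbf{t}))\subseteq H_{\mathbf{t}}(\phi^{*}(\mathbf{t}))$ for all but finitely many $\mathbf{t}$ and all distinct $\alpha,\alpha'\in\mathcal{A}$. Then any $x\in\Lambda_I^{(1)}(\phi)$ lies in $H_{\mathbf{t}}(\phi^{*}(\mathbf{t}))$ for infinitely many $\mathbf{t}$ (the finitely many exceptional $\mathbf{t}$ are irrelevant for the $\limsup$), so $\Lambda_I^{(1)}(\phi)\subseteq\Lambda_H(\phi^{*})$, which is $\mu$-null by hypothesis.

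\textbf{The separated part.} Here the contraction property enters together with Borel--Cantelli. Let $\phi^{+}\in\Phi$ and $\{k_{\mathbf{t}}\}_{\mathbf{t}\in\mathbf{T}}$ with $\sum_{\mathbf{t}}k_{\mathbf{t}}<\infty$ be as in the contraction property, let $\psi:=(\phi^{+})^{*}$ be obtained by applying the intersection property to $\phi^{+}$, and set $H_{\mathbf{t}}:=H_{\mathbf{t}}(\psi(\mathbf{t}))$. Fix $\mathbf{t}$ outside the (finite) union of the exceptional sets and let $G_{\mathbf{t}}\subseteq\mathbf{S}$ be the set of points lying in exactly one $I_{\mathbf{t}}(\alpha,\phi(\mathbf{t}))$, so $G_{\mathbf{t}}=\bigsqcup_{\alpha}G_{\mathbf{t}}^{\alpha}$ with $G_{\mathbf{t}}^{\alpha}\subseteq I_{\mathbf{t}}(\alpha,\phi(\mathbf{t}))\cap\mathbf{S}$. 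For each $\alpha$, cover $I_{\mathbf{t}}(\alpha,\phi(\mathbf{t}))\cap\mathbf{S}$ by the balls of $C_{\mathbf{t},\alpha}$ (condition \eqref{equ:cont_prop_i}) and extract, by the basic covering lemma, a disjoint subfamily $\{B_{i}\}_{i}$ with $\bigcup_{B\in C_{\mathbf{t},\alpha}}B\subseteq\bigcup_{i}5B_{i}$; then \eqref{equ:cont_prop_iii} and the doubling property of $\mu$ give
\[
\mu\bigl(G_{\mathbf{t}}^{\alpha}\setminus H_{\mathbf{t}}\bigr)\;\leq\;\sum_{i}\mu\bigl(5B_{i}\cap I_{\mathbf{t}}(\alpha,\phi(\mathbf{t}))\bigr)\;\leq\;k_{\mathbf{t}}\sum_{i}\mu(5B_{i})\;\lesssim\;k_{\mathbf{t}}\,\mu\Bigl(\textstyle\bigsqcup_{i}B_{i}\Bigr),
\]
and $\bigsqcup_{i}B_{i}\cap\mathbf{S}\subseteq I_{\mathbf{t}}(\alpha,\phi^{+}(\mathbf{t}))$ by \eqref{equ:cont_prop_ii}. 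Summing over $\alpha$ and using that the sets $I_{\mathbf{t}}(\alpha,\phi^{+}(\mathbf{t}))\setminus H_{\mathbf{t}}$ are pairwise disjoint — which is exactly the intersection property applied to $\phi^{+}$ — one gets $\mu(G_{\mathbf{t}}\setminus H_{\mathbf{t}})\lesssim k_{\mathbf{t}}\,\mu(\mathbf{S})$. Since $\sum_{\mathbf{t}}k_{\mathbf{t}}<\infty$, the Borel--Cantelli lemma yields $\mu\bigl(\limsup_{\mathbf{t}}(G_{\mathbf{t}}\setminus H_{\mathbf{t}})\bigr)=0$; combining this with $\mu\bigl(\limsup_{\mathbf{t}}H_{\mathbf{t}}\bigr)=\mu(\Lambda_H(\psi))=0$ and with $\Lambda_I^{(2)}(\phi)\subseteq\limsup_{\mathbf{t}}G_{\mathbf{t}}$ gives $\mu(\Lambda_I^{(2)}(\phi))=0$. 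Hence $\mu(\Lambda_I(\phi))=0$, as required.

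\textbf{Main obstacle.} The only step demanding real care is the measure bookkeeping for the separated part: one must select the subcovers of the $C_{\mathbf{t},\alpha}$ so that their $5$-dilates have controlled overlap, and — crucially — sum the per-$\alpha$ estimates over the possibly infinite index set $\mathcal{A}$ without the total exceeding a bounded multiple of $\mu(\mathbf{S})$. This is precisely where the restriction to the ``exactly one piece'' set $G_{\mathbf{t}}$ and the auxiliary set $H_{\mathbf{t}}(\psi(\mathbf{t}))$ (produced by the intersection property for $\phi^{+}$) are indispensable, since off $H_{\mathbf{t}}(\psi(\mathbf{t}))$ the relevant sets become pairwise disjoint. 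In the function-field setting of this paper the argument simplifies further, as $F^{d}$ is ultrametric with Besicovitch constant $1$, so every ball coincides with its dilates and every cover admits a disjoint subcover; the general metric formulation above is what is imported from \cite{BV}.
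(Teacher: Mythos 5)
The paper does not prove this statement: it is quoted verbatim as the transference theorem from \cite[Theorem 5]{BV}, with no internal argument, so there is nothing in the paper to compare your attempt against. Your outline follows the same overall strategy as \cite{BV} — split $\Lambda_I(\phi)$ into the overlapping piece (absorbed into $\Lambda_H(\phi^*)$ via the intersection property) and the separated piece (handled by contraction and Borel--Cantelli) — and this is indeed the right plan.

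However, there is a genuine gap at the summation over $\alpha$ in the separated part. You derive $\mu(G_{\mathbf{t}}^{\alpha}\setminus H_{\mathbf{t}})\lesssim k_{\mathbf{t}}\,\mu\bigl(\bigsqcup_i B_i(\alpha)\bigr)$ with $\bigsqcup_i B_i(\alpha)\cap\mathbf{S}\subseteq I_{\mathbf{t}}(\alpha,\phi^{+}(\mathbf{t}))$, and then assert $\sum_\alpha\mu\bigl(\bigsqcup_i B_i(\alpha)\bigr)\lesssim\mu(\mathbf{S})$ because the sets $I_{\mathbf{t}}(\alpha,\phi^{+}(\mathbf{t}))\setminus H_{\mathbf{t}}$ are pairwise disjoint. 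That disjointness only controls $\sum_\alpha\mu\bigl(\bigsqcup_i B_i(\alpha)\cap\mathbf{S}\setminus H_{\mathbf{t}}\bigr)$; the portions of the balls inside $H_{\mathbf{t}}=H_{\mathbf{t}}(\psi(\mathbf{t}))$ may overlap across $\alpha$ arbitrarily, and since $\mathcal{A}$ can be countably infinite the sum $\sum_\alpha\mu\bigl(\bigsqcup_i B_i(\alpha)\bigr)$ is not bounded by a multiple of $\mu(\mathbf{S})$ a priori. Your chain of inequalities needs the full measure $\mu\bigl(\bigsqcup_i B_i(\alpha)\bigr)$, not just its trace outside $H_{\mathbf{t}}$, so $\mu(G_{\mathbf{t}}\setminus H_{\mathbf{t}})\lesssim k_{\mathbf{t}}\mu(\mathbf{S})$ does not follow. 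You flag this as the delicate step, but the very fact you invoke — disjointness of $I_{\mathbf{t}}(\alpha,\phi^+(\mathbf{t}))\setminus H_{\mathbf{t}}$ — is precisely what fails to close it. Incidentally, in the ultrametric setting the gap can be repaired: restrict to balls $B_i(\alpha)\in C_{\mathbf{t},\alpha}$ that meet $G_{\mathbf{t}}^{\alpha}\setminus H_{\mathbf{t}}$; if two such balls for $\alpha\neq\alpha'$ overlapped they would be nested, so the witness in the smaller ball would lie in $I_{\mathbf{t}}(\alpha,\phi^+(\mathbf{t}))\cap I_{\mathbf{t}}(\alpha',\phi^+(\mathbf{t}))\subseteq H_{\mathbf{t}}$, a contradiction, whence the family is pairwise disjoint over $(i,\alpha)$. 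But that exploits the ultrametric inequality and does not prove the theorem in the generality stated here (a general locally compact metric space); for that, the careful bookkeeping needed is the real technical content of \cite[Theorem 5]{BV}, which your sketch omits.
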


\subsection{Small gradient}
We use the inhomogeneous transference principle to show that  $|\mathcal W_f^{\text{small}}(\Psi,\theta)|=0.$
Since $\Psi$ satisfies condition \ref{V} and $\sum \Psi(\mathbf{a}) < \infty,$ without loss of generality we may assume that $\Psi(\mathbf{a}) < \Psi_0(\mathbf{a}) \defeq c_0 \prod_{a_i \neq 0, i=1}^n |a_i|^{-1}$  for all but finitely many $\mathbf{a}=(a_1,\dots,a_n) \in \Lambda^n \setminus \{0\}$ and for some $c_0 >0.$ For the sake of convenience we will work with $c_0=1$ (all the computations we are going to do will follow immediately for any fixed $c_0>0 $ with minor changes).  In view of the above inequality, $\mathcal W_f^{\text{small}}(\Psi,\theta) \subseteq \mathcal W_f^{\text{small}}(\Psi_0,\theta)$ and therefore it is enough to show that $|\mathcal W_f^{\text{small}}(\Psi_0,\theta)|=0.$\\

\noindent Following the terminology of the last section (inhomogeneous transference principle), let us take 
\begin{itemize}
    \item $\mathcal{A}= \Lambda \times \Lambda^n \setminus \{0\}$ and $\mathbf{T}= \mathbb{Z}^{n}_{\geq 0}$
    
    \item For $0 < \delta < 1,$ $\phi_{\delta} : \mathbf{T} \rightarrow \mathbb{R}_+$ defined by $\phi_{\delta}(\mathbf{t})=q^{\delta |\mathbf{t}|}.$ Here for $\mathbf{t}=(t_1,\dots,t_n) \in \mathbf{T},$ we define $|\mathbf{t}|\defeq\max_{i} t_i.$
    
    \item $\Phi \defeq \{\phi_{\delta } : 0 < \delta < \varepsilon/2\}$
    
    \item For every $\lambda >0,$ $\mathbf{t}=(t_1,\dots,t_n) \in \mathbf{T}$ and $\alpha=(a_0,\mathbf{a}) \in \mathcal{A},$ we define the sets $I_{\mathbf{t}}(\alpha,\lambda)$ and $H_{\mathbf{t}}(\alpha,\lambda)$ as follows:
    
 \begin{equation}\label{equ:I_t_def}
 I_{\mathbf{t}}(\alpha,\lambda)=\left \{\mathbf{x}\in U
  \left|
\begin{array}{lcl}
\left| a_0 + f(\mathbf{x}) \cdot \mathbf{a} + \theta(\mathbf{x})  \right| < \lambda  \Psi_0(X^{t_1},\dots,X^{t_n}) \\[2ex]
\|\nabla ( f(\mathbf{x}) \cdot \mathbf{a} + \theta(\mathbf{x})  \| < \lambda q^{|\mathbf{t}|(1-\varepsilon)}  \\[2ex]  
\max \{1,|a_i|\}\leq q^{t_i}, \,\, \,\,  i=1,\dots,n
\end{array}\right.\right\}
\end{equation} 

and 
\begin{equation}\label{equ:H_t_def}
 H_{\mathbf{t}}(\alpha,\lambda)=\left \{\mathbf{x}\in U
  \left|
\begin{array}{lcl}
\left| a_0 + f(\mathbf{x}) \cdot \mathbf{a}  \right| < \lambda  \Psi_0(X^{t_1},\dots,X^{t_n}) \\[2ex]
\|\nabla ( f(\mathbf{x}) \cdot \mathbf{a} ) \| <  \lambda q^{|\mathbf{t}|(1-\varepsilon)}  \\[2ex]  
|a_i|\leq q^{t_i}, \,\, \,\,  i=1,\dots,n
\end{array}\right.\right\}
\end{equation} 
\end{itemize}
This gives us the functions (\ref{equ:H}) and (\ref{equ:I}) required in the transference principle. We also get $H_{\mathbf{t}}(\lambda),\,\,  I_{\mathbf{t}}(\lambda),\,\, \Lambda_H(\phi) $ and $\Lambda_I(\phi)$ as defined in the transference principle by equations (\ref{equ:H_t_I_t}), (\ref{equ:H_t_I_t}), (\ref{equ:lambda_H_I}) and (\ref{equ:lambda_H_I}) respectively.

\noindent It is easy to note that $\mathcal W_f^{\text{small}}(\Psi_0,\theta) \subseteq \Lambda_I(\phi_{\delta})$ for every $\delta \in (0,\varepsilon/2).$ Hence to prove our desired result it is enough to show that 
\begin{equation}\label{equ:lambda_I_phi_0}
    |\Lambda_I(\phi_{\delta})|=0 \,\, \text{for some} \,\, \delta \in (0,\varepsilon/2).
\end{equation}
To prove (\ref{equ:lambda_I_phi_0}), we will use the inhomogeneous transference principle. Suppose that $(H,I,\Phi)$ satisfies the intersection property, and the measure $|\cdot|$ is contracting with respect to $(I,\Phi).$ Then in view of Theorem \ref{thm:transf}, to establish (\ref{equ:lambda_I_phi_0}) it enough to show that 
\begin{equation}\label{equ:lambda_H_phi_0}
    |\Lambda_H(\phi_{\delta})|=0 \,\, \text{for all} \,\, \delta \in (0,\varepsilon/2).
\end{equation}
Before going further we fix a notation. Let us denote the LHS of (\ref{equ:small grad}) by $S(t,t',t_1,\dots,t_n).$ Now note that 

\begin{equation*}
    \Lambda_H(\phi_{\delta}) = \displaystyle \limsup_{t \in \mathbf{T}} \,\, \bigcup_{\alpha \in \mathcal{A}} H_t(\alpha,\phi_{\delta}(\mathbf{t})).
\end{equation*}
Using Theorem \ref{thm:small-grad}, we will show that
\begin{equation*}
    \displaystyle \sum_{\mathbf{t} \in T} \left| \bigcup_{\alpha \in \mathcal{A}} H_t(\alpha,\phi_{\delta}(\mathbf{t}))\right| < \infty
\end{equation*}
for all $0 < \delta < \varepsilon/2.$ In view of the Borel-Cantelli lemma, this implies that $|\Lambda_H(\phi_{\delta})|=0.$\\

  Fix any $\delta \in (0,\varepsilon/2).$ In view of (\ref{equ:H_t_def}), we have
  \begin{equation*}
      \left| \bigcup_{\alpha \in \mathcal{A}} H_t(\alpha,\phi_{\delta}(\mathbf{t}))\right| \leq S(-1-[\delta |\mathbf{t}|] + \sum_1^n t_i ,\,\,  2+[\delta |\mathbf{t}|]+[|\mathbf{t}|(1-\varepsilon)], \,\, t_1 +1  ,\dots,t_n+1 ),
  \end{equation*}
where for any real number $x$, $[x]=$ the greatest integer but not greater than $x.$ This implies that 
\begin{equation*}
    \left| \bigcup_{\alpha \in \mathcal{A}} H_t(\alpha,\phi_{\delta}(\mathbf{t}))\right| \leq E \varepsilon_1^{\alpha_1} |U|,
\end{equation*}
where 
\[\varepsilon_1 \defeq \max \left\{q^{1+[\delta |\mathbf{t}|]- \sum_1^n t_i}, q^{\frac{n+3+2[\delta |\mathbf{t}|]+[|\mathbf{t}|(1-\varepsilon)]-|\mathbf{t}|}{n+1}}\right\} \leq \max \left\{q^{1-(1-\delta)|\mathbf{t}|},q^{\frac{n+3-(\varepsilon-2\delta)|\mathbf{t}|}{n+1}}\right\}\]and $\alpha_1 >0$. Observe that for all large $\mathbf{t} \in \mathbb{Z}_{\geq 0}$ both terms in the parenthesis are less than $c_1 q^{-\gamma |\mathbf{t}|}$ for some constants $c_1,\gamma >0$ (we are using the fact that $0 < \delta <\varepsilon/2$, $\sum_i t_i \geq |\mathbf{t}|$ and $[x] \leq x$). Hence we have
\begin{equation*}
    \left| \bigcup_{\alpha \in \mathcal{A}} H_t(\alpha,\phi_{\delta}(\mathbf{t}))\right| \ll q^{-\gamma |\mathbf{t}|},
\end{equation*}
for some $\gamma >0.$ Therefore
\begin{equation*}
    \displaystyle \sum_{\mathbf{t} \in T} \left| \bigcup_{\alpha \in \mathcal{A}} H_t(\alpha,\phi_{\delta}(\mathbf{t}))\right| \ll \displaystyle \sum_{\mathbf{t} \in T} q^{-\gamma |\mathbf{t}|} < \infty.
\end{equation*}
To complete the proof of Theorem \ref{thm:inhomo-conv}, we just need to verify the intersection and contracting properties.

\subsection{Verifying the intersection property}
Let $\mathbf{t} \in T$  with $|\mathbf{t}| \geq 1.$ To  show the intersection property, we suppose that $\mathbf{x} \in  I_{\mathbf{t}}(\alpha, \phi_{\delta}(\mathbf{t})) \cap  I_{\mathbf{t}}(\alpha', \phi_{\delta}(\mathbf{t})) $ for some distinct $\alpha=(a_0,\mathbf{a})$ and $\alpha'=(a_0',\mathbf{a}')$ in $\mathcal{A}.$ Then by Definition (\ref{equ:I_t_def}) we have

\begin{equation}
	\left \{\begin{array}{lcl} |a_0 + f(\mathbf{x}) \cdot \mathbf{a} + \theta(\mathbf{x})| < \phi_{\delta}(\mathbf{t}) \Psi_0(X^{t_1},\dots,X^{t_n})   \\[3ex] \|\nabla ( f(\mathbf{x}) \cdot \mathbf{a} + \theta(\mathbf{x}))\| \ < \phi_{\delta}(\mathbf{t}) q^{|\mathbf{t}|(1-\varepsilon)} \\[2ex]
	|a_i|\leq q^{t_i}, \quad i=1,\dots,n \end{array}\right.
\end{equation} 
and

\begin{equation}
	\left \{\begin{array}{lcl} |a_0' + f(\mathbf{x}) \cdot \mathbf{a}' + \theta(\mathbf{x})| < \phi_{\delta}(\mathbf{t}) \Psi_0(X^{t_1},\dots,X^{t_n})   \\[3ex] \|\nabla ( f(\mathbf{x}) \cdot \mathbf{a}' + \theta(\mathbf{x}))\| \ < \phi_{\delta}(\mathbf{t}) q^{|\mathbf{t}|(1-\varepsilon)} \\[2ex]
	|a_i'|\leq q^{t_i}, \quad i=1,\dots,n \end{array}\right.
\end{equation} 
where $\mathbf{a}=(a_1,\dots,a_n)$ and $\mathbf{a}'=(a_1',\dots,a_n').$ Now subtracting the first inequality from the second within each of the above two systems gives

\begin{equation}
	\left \{\begin{array}{lcl} |a_0'' + f(\mathbf{x}) \cdot \mathbf{a}'' | < \phi_{\delta}(\mathbf{t}) \Psi_0(X^{t_1},\dots,X^{t_n})   \\[3ex] \|\nabla  f(\mathbf{x}) \cdot \mathbf{a}'' \| \ < \phi_{\delta}(\mathbf{t}) q^{|\mathbf{t}|(1-\varepsilon)} \\[2ex]
	|a_i''| \leq q^{t_i}, \quad i=1,\dots,n, \end{array}\right.
\end{equation} 
where $\mathbf{a}''=\mathbf{a}' - \mathbf{a}\defeq(a_1'',\dots,a_n'')$ and $a_0''\defeq a_0'-a_0.$ Note that $\mathbf{a}'' \neq 0,$ because otherwise
\begin{equation*}
    1 \leq |a_0''| < q^{\delta |\mathbf{t}|-\sum_1^n t_i} \leq q^{|\mathbf{t}|(\delta - 1)} < 1,
\end{equation*}
since $0 < \delta < 1.$ Hence $\mathbf{a}'' \neq 0$ and it follows that $\alpha''=(a_0'',\mathbf{a}'') \in \mathcal{A}.$ Therefore $\mathbf{x} \in H_{\mathbf{t}}(\alpha'', \phi_{\delta}(\mathbf{t}))$ and (\ref{equ:inter_property}) is satisfied with $\phi_{\delta}^*=\phi_{\delta}.$ So the verification of intersection property is done.

\subsection{Verifying the contracting property}
Let us consider sufficiently small open ball $V=V(\mathbf{x}_0;r)$ containing $\mathbf{x}_0$ of radius $r>0$ such that Corollary \ref{coro:c-alpha-grad} is valid on $V(\mathbf{x}_0;5r).$ Then there exist positive numbers $M_0,k$ and $C$ such that for any $\mathbf{t} \in T$ and $\alpha=(a_0,\mathbf{a}) \in \mathcal{A}$ with $\|\mathbf{a}\| \geq M_0$, both $|a_0+f(\mathbf{x}) \cdot \mathbf{a}+\theta(\mathbf{x})|$ and $\|\nabla (f(\mathbf{x}) \cdot \mathbf{a} + \theta(\mathbf{x}))\|$ are $(C,1/k)$-good on $V(\mathbf{x}_0;5r).$ Hence for any $\mathbf{t} \in T$ and $\alpha=(a_0,\mathbf{a}) \in \mathcal{A}$ with $\|\mathbf{a}\| \geq M_0$,
\begin{equation}
    G_{\mathbf{t},\alpha}(\mathbf{x}) \defeq \max \left\{\Psi_0^{-1}(X^{t_1},\dots,X^{t_n})q^{|\mathbf{t}|(1-\varepsilon)} |a_0 + f(\mathbf{x}) \cdot \mathbf{a} + \theta(\mathbf{x})|, \|\nabla (f(\mathbf{x}) \cdot \mathbf{a} + \theta(\mathbf{x}))\| \right\}
\end{equation}
is $(C,\frac{1}{k})$-good on $V(\mathbf{x}_0;5r)$. Now observe that 

\begin{equation}\label{equ:I_t_G_t}
 I_{\mathbf{t}}(\alpha, \phi_{\delta}(\mathbf{t})) =\left \{\mathbf{x}\in U
  \left|
\begin{array}{lcl}
G_{\mathbf{t},\alpha}(\mathbf{x}) < \phi_{\delta}(\mathbf{t}) q^{|\mathbf{t}|(1-\varepsilon)} \\[2ex]
\max \{1,|a_i|\}\leq q^{t_i}, \,\, i =1,\dots,n \\[2ex]  
\end{array}\right.\right\}.
\end{equation} 
Also, note that 
\begin{equation*}
    I_{\mathbf{t}}(\alpha, \phi_{\delta}(\mathbf{t})) \subseteq I_{\mathbf{t}}(\alpha, \phi_{\delta}^+(\mathbf{t})),
\end{equation*}
where $\phi_{\delta}^+(\mathbf{t})\defeq\phi_{\frac{\delta}{2}+\frac{\varepsilon}{4}} (\mathbf{t}) \geq \phi_{\delta}(\mathbf{t})$ for all $\mathbf{t} \in T.$ Note that $\phi_{\delta}^+(\mathbf{t})\defeq\phi_{\frac{\delta}{2}+\frac{\varepsilon}{4}} (\mathbf{t}) \in \Phi,$ because $\frac{\delta}{2}+\frac{\varepsilon}{4} < \frac{\varepsilon}{2}.$\\

We now construct the collection $C_{\mathbf{t},\alpha}$ of balls containing points of $\overline{V}$ that satisfy the conditions (\ref{equ:cont_prop_i}), (\ref{equ:cont_prop_ii}) and (\ref{equ:cont_prop_iii})  for a suitable sequence $\{k_{\mathbf{t}}\}$. If $I_{\mathbf{t}}(\alpha, \phi_{\delta}(\mathbf{t})) = \emptyset,$ then the properties holds trivially. Hence Without loss in generality we may assume that $I_{\mathbf{t}}(\alpha, \phi_{\delta}(\mathbf{t})) \neq \emptyset.$ It is easy to observe that  $\phi_{\delta}^+(\mathbf{t}) \Psi_0(X^{t_1},\dots,X^{t_n}) < q^{-(1-\frac{\varepsilon}{2})|\mathbf{t}|}$ for every $\phi_{\delta} \in \Phi.$ In view of this we have 
\begin{equation*}
    I_{\mathbf{t}}(\alpha, \phi_{\delta}^+(\mathbf{t})) \subseteq \left\{\mathbf{x} \in U : |a_0 + f(\mathbf{x}) \cdot \mathbf{a} + \theta(\mathbf{x})| < q^{-(1-\frac{\varepsilon}{2})|\mathbf{t}|}\right\}.
\end{equation*}
By Proposition \ref{prop:non_zero_c_alpha}, there exists a $M_0' >1$ such that 
\begin{equation*}
    \inf_{\tiny{\begin{array}{cc}
     (\mathbf{a},a_0) \in F^{n+1}  \\
         \|\mathbf{a}\| \geq M_0'
    \end{array}}}
\left(\sup_{\mathbf{x} \in V(\mathbf{x}_0;5r)} |a_0+\mathbf{a} \cdot f(\mathbf{x})+\theta(\mathbf{x})|\right)>0.
\end{equation*}
Therefore,
\begin{equation}\label{equ:c_alpha_positivity_con}
    \inf_{\tiny{\begin{array}{cc}
     (\mathbf{a},a_0) \in \Lambda^{n+1}  \\
         \|\mathbf{a}\| \geq M_0'
    \end{array}}}\left(\sup_{\mathbf{x} \in V(\mathbf{x}_0;5r)} \,\, |a_0+\mathbf{a} \cdot f(\mathbf{x})+\theta(\mathbf{x})|\right)>0.
\end{equation}
Recall that   the function $a_0 + f(\mathbf{x}) \cdot \mathbf{a} + \theta(\mathbf{x})$ is $(C,\frac{1}{k})$-good on $V(\mathbf{x}_0;5r)$ for sufficiently large $\|\mathbf{a}\|$. Hence by the definition of $(C,1/k)$-good and the above positivity condition (\ref{equ:c_alpha_positivity_con}), we have that
\begin{eqnarray*}
\left|I_{\mathbf{t}}(\alpha, \phi_{\delta}^+(\mathbf{t})) \cap V \right| & \leq   &  \left |\left\{\mathbf{x} \in V: |a_0 + f(\mathbf{x}) \cdot \mathbf{a} + \theta(\mathbf{x})| < q^{-(1-\frac{\varepsilon}{2})|\mathbf{t}|}\right\}\right| \nonumber \\[2ex]
&  \ll   &  q^{-\frac{1}{k}(1-\frac{\varepsilon}{2})|\mathbf{t}|} |V| \nonumber \end{eqnarray*} 
for all sufficiently large $|\mathbf{t}|.$ Hence $I_{\mathbf{t}}(\alpha, \phi_{\delta}^+(\mathbf{t})) \nsubseteq V$ for all sufficiently large $|\mathbf{t}|.$ We take $S=\overline{V}.$ Note that $I_{\mathbf{t}}(\alpha, \phi_{\delta}^+(\mathbf{t}))$ is open, since $G_{\mathbf{t},\alpha}$ is continuous. Applying this and the fact that  $I_{\mathbf{t}}(\alpha, \phi_{\delta}(\mathbf{t})) \subseteq I_{\mathbf{t}}(\alpha, \phi_{\delta}^+(\mathbf{t})),$ there exists a ball $B'(\mathbf{x})$ containing $\mathbf{x}$ for every $\mathbf{x} \in S \cap I_{\mathbf{t}}(\alpha, \phi_{\delta}(\mathbf{t}))$ such that
\begin{equation}\label{equ:Bx_S}
    B'(\mathbf{x}) \cap S \subseteq I_{\mathbf{t}}(\alpha, \phi_{\delta}^+(\mathbf{t})).
\end{equation}
Now combining $I_{\mathbf{t}}(\alpha, \phi_{\delta}^+(\mathbf{t})) \not\subseteq V$,
(\ref{equ:Bx_S}) and the fact that $V$ is bounded, we get that there exists a constant $\tau \geq 1$ such that the ball $B=B(\mathbf{x})=\tau B'(\mathbf{x})$ satisfies
\begin{equation}
    5B(\mathbf{x}) \subseteq 5V
\end{equation} 
and
\begin{equation}\label{equ:B_S_I_t}
    B(\mathbf{x}) \cap S \subseteq  I_{\mathbf{t}}(\alpha, \phi_{\delta}^+(\mathbf{t})) \nsupseteq S \cap 5B(\mathbf{x})
\end{equation}
holds for all but  finitely many $\mathbf{t}.$ We now take 
\begin{equation}
    C_{\mathbf{t},\alpha} \defeq \{B(\mathbf{x}): \mathbf{x} \in S \cap I_{\mathbf{t}}(\alpha, \phi_{\delta}(\mathbf{t})) \}.
\end{equation}
From the last two set inclusions and the definition of $C_{\mathbf{t},\alpha},$ it is clear that conditions (\ref{equ:cont_prop_i}) and (\ref{equ:cont_prop_ii}) are automatically satisfied. Hence we just need to check condition (\ref{equ:cont_prop_iii}). For that, consider any ball $B \in C_{\mathbf{t},\alpha}.$ By (\ref{equ:B_S_I_t}) and (\ref{equ:I_t_G_t}), we have 
\begin{equation}
    \sup_{\mathbf{x} \in 5B} G_{\mathbf{t},\alpha}(\mathbf{x}) \geq \sup_{\mathbf{x} \in 5B \cap S} G_{\mathbf{t},\alpha}(\mathbf{x}) \geq \phi_{\delta}^+(\mathbf{t}) q^{|\mathbf{t}|(1-\varepsilon)}.
\end{equation}
On the other hand, we have  
\begin{eqnarray}\label{equ:sup_G_t_alpha}
 \sup_{\mathbf{x} \in 5B \cap I_{\mathbf{t}}(\alpha, \phi_{\delta}(\mathbf{t})) } G_{\mathbf{t},\alpha} (\mathbf{x}) & \leq   &  \phi_{\delta}(\mathbf{t}) q^{|\mathbf{t}|(1-\varepsilon)} \nonumber\\[2ex] &  = & \phi_{\delta}^+(\mathbf{t}) q^{|\mathbf{t}|(1-\varepsilon)} q^{|\mathbf{t}|(\frac{\delta}{2}-\frac{\varepsilon}{4})}  \nonumber \\[2ex]
&  \leq  &  q^{|\mathbf{t}|(\frac{\delta}{2}-\frac{\varepsilon}{4})} \sup_{\mathbf{x} \in 5B} G_{\mathbf{t},\alpha}(\mathbf{x}) . \end{eqnarray} 

\noindent Therefore for all large $|\mathbf{t}|$ and $\alpha \in \mathcal{A}$ we have
\begin{eqnarray}
 \left | 5B \cap I_{\mathbf{t}}(\alpha, \phi_{\delta}(\mathbf{t})) \cap V \right| & \leq   &    \left | 5B \cap I_{\mathbf{t}}(\alpha, \phi_{\delta}(\mathbf{t}))  \right| \nonumber \\[2ex]
&  \leq  &  \left| \left\{\mathbf{x} \in 5B : |G_{\mathbf{t},\alpha} (\mathbf{x})| \leq  q^{|\mathbf{t}|(\frac{\delta}{2}-\frac{\varepsilon}{4})} \sup_{\mathbf{x} \in 5B} G_{\mathbf{t},\alpha}(\mathbf{x}) \right \}  \right|  \nonumber \\[2ex]
& \leq & C  q^{|\mathbf{t}|\frac{1}{k}(\frac{\delta}{2}-\frac{\varepsilon}{4})} |5B| \nonumber \\[2ex]
&  \leq   &   C C'q^{|\mathbf{t}|\frac{1}{k}(\frac{\delta}{2}-\frac{\varepsilon}{4})}  |5B \cap V|, \end{eqnarray}
since $5B \subseteq 5V,$ the measure is doubling and the the ball $5B$ contains some point of $\overline{V}.$ Note that $C'$ depends only on $d.$ Therefore condition (\ref{equ:cont_prop_iii}) verifies with $k_{\mathbf{t}}=  C C'q^{|\mathbf{t}|\frac{1}{k}(\frac{\delta}{2}-\frac{\varepsilon}{4})}. $ Now since $(\frac{\delta}{2}-\frac{\varepsilon}{4})<0,$ the series $\sum k_{\mathbf{t}} < \infty$   and hence we are done.

\noindent This completes the verification of the contracting property.

\section{Inhomogeneous divergence case}\label{section:divg}
In this section, we prove Theorem \ref{thm:inhomo-div} using the technique of ubiquitous systems from \cite{BBV,BDV}. Now we recall a few definitions from \cite{BBV}. Let $U$ be an open ball in $F^d$ and $f: U \rightarrow F^n $ is an analytic map satisfying condition \ref{II}. For $\delta >0$ and $t \in \mathbb N $, we define
\begin{equation}
 \Phi^f(t,\delta)\defeq\left \{\mathbf{x}\in U: \exists \,\, \mathbf{a} \in \Lambda^n \setminus \{0\}, a_0 \in \Lambda \,\,
  \left|
\begin{array}{lcl}
\left| f(\mathbf{x}) \cdot \mathbf{a} + a_0 \right| < \delta q^{-nt}\\[2ex]
\|\mathbf{a}\|=q^t \\[2ex]  
\end{array}\right.\right\}.
\end{equation} 
We recall the notion of a nice function from \cite{BBV}.
\begin{definition}\label{def:nice}
  $f$ is said to be $\textit{nice}$ at $\mathbf{x}_0 \in U$ if  there exists a neighbourhood $U_0 \subseteq U$ of $\mathbf{x}_0$ and constants $0<\delta,w<1$ such that for any sufficiently small ball $B \subseteq U_0$ we have that 
 \begin{equation}
     \limsup_{t \rightarrow \infty} |\Phi^f(t,\delta) \cap B| \leq w|B|.
 \end{equation}
\end{definition}
\noindent The map $f$ is said to be nice if it is nice at almost every point in $U.$

\subsection{Ubiquitous Systems in $F^d$}  We recall the ubiquity framework from \cite{BDV,BBV}. Let $U \subseteq F^d$ be an open ball and $\mathcal{R} = (R_{\alpha})_{\alpha \in J}$ be a family of subsets $R_{\alpha} \subseteq F^d$ indexed by a countable set $J$. The sets $R_{\alpha}$ are referred to as $\textit{resonant sets}.$ Throughout $\rho : \mathbb{R}_+ \rightarrow \mathbb{R}_+$ will denote a function such that $\rho(r) \rightarrow 0$ as $r \rightarrow \infty.$ Given a set $A \subseteq U,$ let 
\begin{equation*}
    \Delta(A,r)\defeq \{\mathbf{x} \in U : \text{dist}(\mathbf{x},A) < r \}
\end{equation*}
where $\text{dist}(\mathbf{x},A)\defeq \inf \{ \|\mathbf{x}-\mathbf{a}\| : \mathbf{a} \in A \}.$ Moreover, let $\beta : J \rightarrow \mathbb{R}_+$ such that $\alpha \mapsto \beta_{\alpha}$ be a positive function on  $J.$  Thus the function $\beta$ attaches a `weight' $\beta_{\alpha}$ to the set $R_{\alpha}.$  We will assume that the set $J_t = \{\alpha \in J : \beta_{\alpha} \leq q^t \}$ is finite for all $t \in \mathbb N$.\\

\noindent \textbf{The intersection  conditions:} 
There exists a constant $\gamma$ with $0 \leq \gamma \leq d$  such that for any sufficiently large $t$ and  $\alpha \in J_t,$ $c \in {R}_{\alpha}$ and $0 < \lambda \leq \rho(q^t)$ the following conditions are satisfied:
\begin{equation}\label{equ:ubi_int_pro_i}
    \left| B\left(c;\frac{1}{2} \rho(q^t)\right) \cap \Delta(R_{\alpha},\lambda)\right| \geq c_1 \left|B(c;\lambda)\right| \left(\frac{\rho(q^t)}{\lambda}\right)^{\gamma }
\end{equation} 
\begin{equation}\label{equ:ubi_int_pro_ii}
    \left|B \cap B(c;3\rho(q^t)) \cap \Delta(R_{\alpha},3\lambda)\right| \leq c_2 |B(c;\lambda)| \left(\frac{r(B)}{\lambda}\right)^{\gamma},
\end{equation}
where $B$ is an arbitrary ball containing some point of a resonant set with radius $r(B) \leq 3 \rho(q^t).$ The constants $c_1$ and $c_2$ are positive and absolute. 

\begin{remark} The constant $\gamma$ mentioned just above is called the \emph{common dimension} of the resonant sets. The rationale behind the nomenclature is as follows. In practical applications, it turns out that the resonant set $R_{\alpha}$'s are submanifolds of the same dimension. One can then show that the intersection conditions are fulfilled with $\gamma=\dim R_{\alpha}$. For more details see \cite[\S 2.3]{BDV}.

\end{remark}

\begin{definition}
 Suppose that there exists a ubiquitous function $\rho$ and an absolute constant $k >0$ such that for any ball $B \subseteq U$
 \begin{equation}\label{equ:ubi_cover_pro}
     \liminf_{t \rightarrow \infty }  \left| \bigcup_{\alpha \in J_t} \Delta(R_{\alpha},\rho(q^t)) \cap B \right|  \geq k |B| . 
 \end{equation}
 Furthermore, suppose that the intersection properties (\ref{equ:ubi_int_pro_i}) and (\ref{equ:ubi_int_pro_ii}) are satisfied. Then the system $(\mathcal{R},\beta)$ is called $\textit{locally ubiquitous}$ in $U$ relative to $\rho.$
\end{definition}

Let $(\mathcal{R},\beta)$ be a locally ubiquitous system in $U$ relative to $\rho$ and $\phi$ be an approximating function, i.e., $\phi : \mathbb R_{+} \rightarrow \mathbb{R}_{+}$ is a decreasing function. Let $\Lambda(\phi)$ be the set of points $\mathbf{x} \in U$ such that the inequality 
\begin{equation}
    \text{dist}(\mathbf{x},R_{\alpha}) < \phi(\beta_{\alpha})
\end{equation}
holds for infinitely many $\alpha \in J.$

\noindent Now we recall the following ubiquity lemma from \cite{BBV}, which is the key to prove the divergent part.
\begin{lemma}[Ubiquity lemma]\label{lem:ubiquity}
Let $\phi$ be an approximating function and $(\mathcal{R},\beta)$ be a locally ubiquitous system in $U$ relative to $\rho.$ Suppose that there is a $\lambda \in \mathbb{R},$ $0 < \lambda < 1$ such that $\rho(q^{t+1}) < \lambda \rho(q^t)$  for all $t \in \mathbb N.$ Then for any $s> \gamma,$
\begin{equation}
    \mathscr{H}^s(\Lambda(\phi))= \mathscr{H}^s(U) \text{ if }\sum_{t=1}^{\infty} \frac{{\phi(q^t)}^{s-\gamma}} {{\rho(q^t)}^{d-\gamma}}  = \infty.
\end{equation}
\end{lemma}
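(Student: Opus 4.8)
The cleanest route is to \emph{invoke} the ubiquity lemma in the form established by Beresnevich--Dickinson--Velani \cite{BDV} (see also \cite{BBV}), which is proved in the general setting of a metric measure space satisfying a short list of axioms, and to check that $F^{d}$ meets them. Concretely, one works inside a fixed ball $B_{0}\subseteq U$ equipped with (the restriction of) the Haar measure $|\cdot|$: this measure is doubling with $|B(c,r)|\asymp r^{d}$; the space $(F^{d},\|\cdot\|)$ is complete, locally compact and separable; and it is Besicovitch --- indeed, being ultrametric, balls of a given radius are either equal or disjoint, so $N_{F^{d}}=1$ and the full Hausdorff measure/content machinery, in particular the mass distribution principle, is available exactly as in the Euclidean case. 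Since $\Lambda(\phi)\subseteq U$ we have $\mathscr{H}^{s}(\Lambda(\phi))\le\mathscr{H}^{s}(U)$ for free, and because the hypotheses \eqref{equ:ubi_int_pro_i}, \eqref{equ:ubi_int_pro_ii} and \eqref{equ:ubi_cover_pro} of local ubiquity are stated relative to every ball, it suffices to prove $\mathscr{H}^{s}(\Lambda(\phi)\cap B_{0})=\mathscr{H}^{s}(B_{0})$ for an arbitrary such $B_{0}$.

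For completeness I would reproduce the underlying Cantor-type construction. One picks a rapidly increasing sequence of levels $t_{1}<t_{2}<\cdots$ and builds nested finite families $\mathcal{K}_{1},\mathcal{K}_{2},\dots$ of pairwise disjoint balls, each member of $\mathcal{K}_{j+1}$ contained in a member of $\mathcal{K}_{j}$, with limit set $\mathbf{K}=\bigcap_{j\ge 1}\bigcup_{B\in\mathcal{K}_{j}}B$. To pass from a ball $B\in\mathcal{K}_{j}$ to its children: first apply local ubiquity \eqref{equ:ubi_cover_pro} at level $t_{j+1}$ (chosen large, which is possible since $\rho(q^{t+1})<\lambda\rho(q^{t})$ forces $\rho(q^{t})\to 0$), so that the sets $\Delta(R_{\alpha},\rho(q^{t_{j+1}}))$, $\alpha\in J_{t_{j+1}}$, cover a definite proportion of $B$, and use the ultrametric property to extract from $B$ a collection of pairwise disjoint balls of radius $\rho(q^{t_{j+1}})$, each centred on some resonant set $R_{\alpha}$ with $\beta_{\alpha}\le q^{t_{j+1}}$; second, inside each of these, use the lower intersection bound \eqref{equ:ubi_int_pro_i} with $\lambda\le\phi(\beta_{\alpha})$ to locate many pairwise disjoint sub-balls lying inside $\Delta(R_{\alpha},\phi(\beta_{\alpha}))$. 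Declaring these sub-balls the members of $\mathcal{K}_{j+1}$ inside $B$, every point of $\mathbf{K}$ is within $\phi(\beta_{\alpha})$ of $R_{\alpha}$ for a sequence of resonant sets with $\beta_{\alpha}\to\infty$, hence $\mathbf{K}\subseteq\Lambda(\phi)\cap B_{0}$.

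It then remains to put on $\mathbf{K}$ the natural probability measure $\mu$ assigning equal mass to the balls of each generation and to verify the Frostman bound $\mu(D)\ll r(D)^{s}$ for every ball $D$; given this, the mass distribution principle gives $\mathscr{H}^{s}(\Lambda(\phi)\cap B_{0})\ge\mathscr{H}^{s}(\mathbf{K})>0$ with an implied constant independent of $B_{0}$, and the standard upgrade from uniform local positivity of the $G_{\delta}$ limsup set $\Lambda(\phi)$ to full measure yields $\mathscr{H}^{s}(\Lambda(\phi)\cap B_{0})=\mathscr{H}^{s}(B_{0})$. The delicate point --- and the part of \cite{BDV} one must follow with care --- is the simultaneous calibration of the level sequence $t_{j}$ so that, at every generation, the children produced by the two intersection bounds are numerous enough to force the Frostman bound; this is precisely where the divergence hypothesis $\sum_{t}\phi(q^{t})^{s-\gamma}/\rho(q^{t})^{d-\gamma}=\infty$ is consumed (the $t_{j+1}$ are chosen so that the partial sums of this series over $t_{j}<t\le t_{j+1}$ exceed a fixed threshold), the upper bound \eqref{equ:ubi_int_pro_ii} controlling how many children a small ball $D$ can meet, and the geometric decay $\rho(q^{t+1})<\lambda\rho(q^{t})$ ensuring that the balls shrink to points and that the series over the chosen subsequence still governs the construction. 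Everything else is a transcription of \cite{BDV,BBV} with $\mathbb{R}^{d}$ replaced by $F^{d}$ and all Besicovitch constants equal to $1$.
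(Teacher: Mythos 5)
Your proposal takes the same route as the paper: the paper merely recalls this lemma from \cite{BBV} (which in turn rests on the general metric-space ubiquity framework of \cite{BDV}) without giving a proof, so your strategy of invoking the BDV machinery and checking that $(F^{d},\|\cdot\|, |\cdot|)$ satisfies its axioms (complete, locally compact, doubling, Besicovitch with constant $1$ by the ultrametric property) is exactly what is required. Your supplementary sketch of the Cantor-type construction, the Frostman bound via the mass distribution principle, and the calibration of the level sequence against the divergent sum is a faithful summary of the \cite{BDV} argument and in fact supplies more detail than the paper itself records.
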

In order to prove Theorem \ref{thm:inhomo-div}, first we construct a locally ubiquitous system such that $\Lambda(\phi) \subseteq  \mathcal W_f(\Psi,\theta)$. Then using Lemma \ref{lem:ubiquity}, we get that $ \mathscr{H}^s(\Lambda(\phi))= \mathscr{H}^s(U)$, which in turn yields that $ \mathscr{H}^s(\mathcal W_f(\Psi,\theta))= \mathscr{H}^s(U)$.

\subsection{Minkowski's convex body theorem for function fields}
Convex bodies naturally appear in our construction of locally ubiquitous system and we need the function field analog of Minkowski's convex body theorem proved by K. Mahler in \cite[\S 9]{M}. We first recall the definition of a convex body in function fields. 
\begin{definition}
    By a convex body $\mathcal{C}$ of $F^n$, we mean a free $\mathscr{O}$-submodule of $F^n$ of rank $n.$
\end{definition} 
Given a convex body $
\mathcal{C}$ of $F^n$ and $i=1,\dots,n$, the $i$-th successive minima is defined as
\begin{equation*}
    \mu_i=\min\{|\mathbf{\rho}|: \mathbf{\rho}\in F \setminus \{0\}, \mathbf{\rho} \mathcal C \,\, \text{contains}\,\, i \,\, \text{linearly independent elements of} \,\, \Lambda^n\},
\end{equation*}
where $\mathbf{\rho} \mathcal C$ denotes the dilated convex body defined by $\{\mathbf{\rho} \mathbf{y}: \mathbf{y} \in \mathcal C\}.$
 Minkowski's convex body theorem in the context of function fields is as follows. 
\begin{theorem}{\cite[\S 9]{M}}\label{thm:mink_conv_body}
    Let $\mathcal C$ be a convex body of $F^n$ and $\mu_i$ be the $i$-th successive minima of $\mathcal C$ for $i=1,\dots,n.$ Then we have the following
    \begin{equation*}
        \mu_1 \dots \mu_n |\mathcal C|=1.
    \end{equation*}
\end{theorem}

\subsection{The proof of Theorem \ref{thm:inhomo-div}}We first deduce the following corollary of Theorem \ref{thm:big-grad} and \ref{thm:small-grad}, which says that the functions satisfying the hypothesis of Theorem \ref{thm:inhomo-div} are nice.
\begin{corollary}\label{cor:nice}
Let $U \subseteq F^d$ is open, $f : U \rightarrow F^n$ satisfies \ref{II}, \ref{III}, \ref{IV} and $\mathbf{x}_0 \in U.$ Then there exists a sufficiently small ball $B_0 \subseteq U$ containing $\mathbf{x}_0$  and positive constants $C,$ $\alpha_1$ such that for any ball $B \subseteq B_0$ and any $0 < \delta <1$  we have 
\begin{equation}
    |\Phi^f(t,\delta) \cap B| \leq C \delta^{\alpha_1} |B|,
\end{equation}
for sufficiently large $t.$
\end{corollary}
\begin{proof}
Observe that $\Phi^f(t,\delta) \cap B = \left(\Phi_{\text{small}}^f(t,\delta) \cap B\right)  \cup \left(\Phi_{\text{big}}^f(t,\delta) \cap B \right),$ where for fixed $0 < \varepsilon < 1/2 $

\begin{equation}
 \Phi_{\text{big}}^f(t,\delta) \cap B\defeq\left \{\mathbf{x}\in B: \exists \,\, \mathbf{a} \in \Lambda^n \setminus \{0\}, a_0 \in \Lambda \,\,
  \left|
\begin{array}{lcl}
\left| f(\mathbf{x}) \cdot \mathbf{a} + a_0 \right| < \delta q^{-nt}\\[2ex]
\|\nabla(f(\mathbf{x}) \cdot \mathbf{a})\| \geq \|\mathbf{a}\|^{1-\varepsilon} \\ [2ex]
\|\mathbf{a}\|=q^t \\[2ex]  
\end{array}\right.\right\}
\end{equation} 
and
\begin{equation}
 \Phi_{\text{small}}^f(t,\delta) \cap B \defeq\left \{\mathbf{x}\in B: \exists \,\, \mathbf{a} \in \Lambda^n \setminus \{0\}, a_0 \in \Lambda \,\,
  \left|
\begin{array}{lcl}
\left| f(\mathbf{x}) \cdot \mathbf{a} + a_0 \right| < \delta q^{-nt}\\[2ex]
\|\nabla(f(\mathbf{x}) \cdot \mathbf{a})\| < \|\mathbf{a}\|^{1-\varepsilon} \\ [2ex]
\|\mathbf{a}\|=q^t \\[2ex]  
\end{array}\right.\right\}.
\end{equation} 
Now choose $t'' \in \mathbb N$ such that $q^{-n(t''+1)} \leq \delta < q^{-nt''}.$ In view to this 
\begin{equation*}
 \Phi_{\text{small}}^f(t,\delta) \cap B \subseteq \left \{\mathbf{x}\in B: \exists \,\, \mathbf{a} \in \Lambda^n \setminus \{0\}, a_0 \in \Lambda \,\,
  \left|
\begin{array}{lcl}
\left| f(\mathbf{x}) \cdot \mathbf{a} + a_0 \right| <  q^{-n(t+t'')}\\[2ex]
\|\nabla(f(\mathbf{x}) \cdot \mathbf{a})\| < q^{[t(1-\varepsilon)]+1} \\ [2ex]
|a_i| < q^{t+1}, \quad  \text{for} \,\, i=1,\dots,n \\[2ex]  
\end{array}\right.\right\}
\end{equation*} 
By  Theorem \ref{thm:small-grad}, we get
\begin{equation*}
    | \Phi_{\text{small}}^f(t,\delta) \cap B| \leq E \varepsilon^{\alpha} |B |,
\end{equation*}
where the constant $E$ is uniform, $\alpha >0$ and $\varepsilon= \max \left\{q^{-n(t+t'')},q^{\frac{n+[t(1-\varepsilon)]-t-nt''}{n+1}}\right\}.$ Now observe that $\varepsilon \leq \delta^{\frac{1}{n+1}}  \max \left\{q^{\frac{n-nt}{n+1}},q^{\frac{2n-t\varepsilon}{n+1}} \right\} \leq \delta^{\frac{1}{n+1}}$ for large enough $t,$ since $q^{-n(t''+1)} \leq \delta.$

\noindent By  Corollary \ref{coro:big_grad_thm}, we have
\begin{equation*}
    | \Phi_{\text{big}}^f(t,\delta) \cap B| \leq C' \delta |B|,
\end{equation*}
for large enough $t$ and a universal constant $C'.$

\noindent Now choose $\alpha_1=\min\{\frac{\alpha}{n+1},1\}>0.$ Therefore combining the above two estimates, we get that
\begin{equation*}
     |\Phi^f(t,\delta) \cap B| \leq C \delta^{\alpha_1} |B|
\end{equation*}
for sufficiently large $t$, where $C$ is a positive constant.

\end{proof}
In view of Corollary \ref{cor:nice}, Theorem \ref{thm:inhomo-div} reduces to the following theorem. 
\begin{theorem}\label{thm:inhomo_div_main}
Let $f : U \subseteq F^d \rightarrow F^n$ is nice and satisfies \ref{II}, \ref{III}, and \ref{IV}. Let $\Psi(\mathbf{a})= \psi(\|\mathbf{a}\|),$ for $\mathbf{a} \in \Lambda^{n} \setminus \{\mathbf{0}\}$, where $\psi : \mathbb{R}_{+} \rightarrow \mathbb{R}_+$ is a decreasing function and $\theta : U \rightarrow F $ be an analytic map satisfying \ref{VI}. Then for any $s > d-1,$ we have 
\begin{equation}
    \mathscr{H}^s(\mathcal W_f(\Psi,\theta) \cap U)=\mathscr{H}^s(U) \,\,  \,\,  \,\, if \,\, \,\, \,\, \displaystyle \sum_{\mathbf{a} \in \Lambda^n  \setminus \{0\} }  \|\mathbf{a}\| \left(\frac{\Psi (\mathbf{a})}{\|\mathbf{a}\|}\right)^{s+1-d} = \infty.
\end{equation}
\end{theorem}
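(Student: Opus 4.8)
The plan is to run the ubiquitous systems machinery of \cite{BBV,BDV}, as summarized in Lemma \ref{lem:ubiquity}, on the manifold $f(U)$ with inhomogeneous shift $\theta$. First I would reduce to a local statement: by the niceness of $f$ (Theorem \ref{thm:nice}) and a standard covering argument, it suffices to prove that $\mathscr{H}^s(\mathcal{W}_f(\Psi,\theta)\cap B)=\mathscr{H}^s(B)$ for every sufficiently small ball $B=B_0\subseteq U$ on which the conclusion of Theorem \ref{thm:nice} holds. Fix such a $B_0$. The resonant sets will be the `inhomogeneous hypersurfaces'
$$
R_{(\mathbf{a},a_0)} \defeq \{\mathbf{x}\in B_0 : a_0 + f(\mathbf{x})\cdot\mathbf{a} + \theta(\mathbf{x}) = 0\},
$$
indexed by $J=\{(\mathbf{a},a_0)\in(\Lambda^n\setminus\{0\})\times\Lambda : R_{(\mathbf{a},a_0)}\cap B_0\neq\emptyset\}$, with weight $\beta_{(\mathbf{a},a_0)}\defeq\|\mathbf{a}\|$. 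Since $f_1(\mathbf{x})=x_1$ by assumption (\ref{II}), on the set where the gradient is nonzero these are genuine $C^l$ hypersurfaces (codimension one), so the common dimension is $\gamma=d-1$, which is exactly why the hypothesis $s>d-1$ appears. For the approximating function in the ubiquity lemma I would take $\phi(q^t)\defeq\psi(q^t)/q^t$ (so that $\dist(\mathbf{x},R_{(\mathbf{a},a_0)})<\phi(\|\mathbf{a}\|)$ together with $\|\nabla(f\cdot\mathbf{a}+\theta)\|\approx\|\mathbf{a}\|$ yields $|a_0+f(\mathbf{x})\cdot\mathbf{a}+\theta(\mathbf{x})|<\psi(\|\mathbf{a}\|)=\Psi(\mathbf{a})$), and $\rho(q^t)\defeq q^{-t}$ as the ubiquity function; the geometric decay condition $\rho(q^{t+1})<\lambda\rho(q^t)$ holds with $\lambda=q^{-1}<1$. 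With these choices the divergence sum $\sum_t \phi(q^t)^{s-\gamma}/\rho(q^t)^{d-\gamma}$ is, up to the combinatorial factor counting $\{\mathbf{a}:\|\mathbf{a}\|=q^t\}$ (which is $\asymp q^{tn}$) and the ambient one-dimensional fibre over $a_0$ (also $\asymp q^t$ once the gradient is of size $q^t$, so that each resonant set is counted with the right multiplicity), comparable to $\sum_{\mathbf{a}}\|\mathbf{a}\|(\Psi(\mathbf{a})/\|\mathbf{a}\|)^{s+1-d}$; I would carry out this bookkeeping carefully to match the normalizations.

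The three things that must be checked to invoke Lemma \ref{lem:ubiquity} are the two intersection conditions \eqref{equ:ubi_int_pro_i}--\eqref{equ:ubi_int_pro_ii} and the local ubiquity (covering) estimate \eqref{equ:ubi_cover_pro}. The intersection conditions are essentially geometric: on the region where $\|\nabla(f\cdot\mathbf{a}+\theta)\|\geq\|\mathbf{a}\|^{1-\varepsilon}$, the resonant set $R_{(\mathbf{a},a_0)}$ is a Lipschitz graph over a coordinate hyperplane with controlled second difference quotients (by assumptions (\ref{IV}) and (\ref{VI})), so a neighbourhood $\Delta(R_{(\mathbf{a},a_0)},\lambda)$ of it has measure comparable to $\lambda$ times the $(d-1)$-dimensional measure of the piece of $R_{(\mathbf{a},a_0)}$ inside the ball; this is the ultrametric analogue of \cite[\S 4]{BBV} and I expect it to go through with the $(C,\alpha)$-good estimates of Proposition \ref{prop:grad_C} and Corollary \ref{coro:c-alpha-grad} handling the exceptional large-gradient locus. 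The genuinely substantive step, and the one I expect to be the main obstacle, is the local ubiquity estimate \eqref{equ:ubi_cover_pro}: one must show that for every ball $B\subseteq B_0$,
$$
\liminf_{t\to\infty}\Bigl|\bigcup_{\|\mathbf{a}\|\leq q^t} \Delta(R_{(\mathbf{a},a_0)},q^{-t})\cap B\Bigr| \geq k\,|B|
$$
for an absolute $k>0$. This is where the Dirichlet-type / counting input enters: by a pigeonhole (Minkowski / geometry of numbers) argument over $F$, every $\mathbf{x}\in B$ lies within $q^{-t}$ of \emph{some} resonant set with $\|\mathbf{a}\|\leq q^t$, \emph{except} for those $\mathbf{x}$ for which the only available $(\mathbf{a},a_0)$ has small gradient — and the measure of that exceptional set is exactly what niceness (Theorem \ref{thm:nice}, equivalently the bound $|\Phi^f(t,\delta)\cap B|\leq C\delta^{\alpha_1}|B|$) controls, giving $w<1$ and hence $k=1-w>0$. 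Assembling this requires the inhomogeneous Dirichlet theorem over $\mathbb{F}_q((X^{-1}))$ (the shift $\theta(\mathbf{x})$ causes no trouble since for fixed $\mathbf{x}$ it is just a constant), plus the niceness bound; I would state the Dirichlet input as a lemma and then combine. Once \eqref{equ:ubi_cover_pro} and the intersection conditions hold, Lemma \ref{lem:ubiquity} gives $\mathscr{H}^s(\Lambda(\phi)\cap B_0)=\mathscr{H}^s(B_0)$; since $\Lambda(\phi)\cap\{\mathbf{x}:\|\nabla(f\cdot\mathbf{a}+\theta)\|\geq\|\mathbf{a}\|^{1-\varepsilon}\text{ i.o.}\}\subseteq\mathcal{W}_f(\Psi,\theta)$ and the complementary small-gradient set is $\mathscr{H}^s$-null (again by Theorem \ref{thm:nice}, since $s>d-1\geq\alpha_1$-controlled), we conclude $\mathscr{H}^s(\mathcal{W}_f(\Psi,\theta)\cap B_0)=\mathscr{H}^s(B_0)$, and summing over a countable cover of $U$ finishes the proof.

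One technical point I would address up front: the resonant sets as defined may fail to be hypersurfaces precisely where the gradient degenerates, so I would either restrict $J$ to those $(\mathbf{a},a_0)$ with $\|\nabla(f\cdot\mathbf{a}+\theta)\|\geq\|\mathbf{a}\|^{1-\varepsilon}$ on a fixed-proportion subball (this is harmless because the discarded indices contribute an $\mathscr{H}^s$-null set by niceness), or work with the thickened sets directly and absorb the bad locus into the error term. With that caveat the argument is a faithful translation of \cite{BBV} to the positive-characteristic setting, using $N_{F^d}=1$ (so the Besicovitch constant disappears) and the ultrametric inequality to simplify several of the covering estimates.
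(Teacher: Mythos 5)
Your overall plan is the same as the paper's: reduce to a local ball via niceness, set up a locally ubiquitous system whose resonant sets are the zero sets of $a_0 + \mathbf{a}\cdot f + \theta$, take $\gamma = d-1$, feed into the ubiquity lemma, and conclude $\Lambda(\phi)\subseteq \mathcal{W}_f(\Psi,\theta)$. However, your choice of the ubiquity function $\rho$ is wrong, and this is not a matter of bookkeeping normalizations. You take $\rho(q^t) = q^{-t}$, so your divergence sum is
\[
\sum_t \frac{\phi(q^t)^{s-d+1}}{\rho(q^t)^{1}} \asymp \sum_t q^t\left(q^{-t}\psi(q^t)\right)^{s-d+1},
\]
which is short of the target $\sum_{\mathbf{a}}\|\mathbf{a}\|(\Psi(\mathbf{a})/\|\mathbf{a}\|)^{s+1-d} \asymp \sum_t q^{t(n+1)}(q^{-t}\psi(q^t))^{s-d+1}$ by a factor of $q^{tn}$. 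You acknowledge the mismatch and speak of the "combinatorial factor counting $\{\mathbf{a}:\|\mathbf{a}\|=q^t\}$", but Lemma \ref{lem:ubiquity}'s divergence criterion is literally $\sum_t \phi(q^t)^{s-\gamma}/\rho(q^t)^{d-\gamma}$ with no such factor; there is nowhere to absorb the $q^{tn}$. The count of resonant sets enters not through the divergence sum but through how small $\rho$ can be while the covering property \eqref{equ:ubi_cover_pro} still holds. The correct choice, and the one the paper uses, is $\rho(r) \asymp r^{-(n+1)}$: Minkowski gives $|a_0+\mathbf{a}\cdot f(\mathbf{x})+\theta(\mathbf{x})| \ll q^{-nt}$ for some $\|\mathbf{a}\|\leq q^t$, and if one can arrange $|\partial_1(a_0+\mathbf{a}\cdot f+\theta)|\asymp q^t$ then the distance to the resonant set is $\ll q^{-nt}/q^t = q^{-(n+1)t}$. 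With your $\rho$, the covering is trivially true but the resulting theorem is strictly weaker than the one claimed (your divergence hypothesis does not follow from the target one for any $n\geq 1$).

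Two further remarks. First, your covering sketch ("except for those $\mathbf{x}$ for which the only available $(\mathbf{a},a_0)$ has small gradient — and the measure of that exceptional set is exactly what niceness controls") conflates two different things. Niceness bounds $|\Phi^f(t,\delta)\cap B|$, i.e.\ the set where Dirichlet is $\delta$-improvable, not directly the set where all Dirichlet solutions have small gradient. The paper's argument outside $\Phi^f(t,\delta)$ uses the lower bound $\lambda_1>\delta$ on the first successive minimum to bound $\lambda_{n+1}<\delta^{-n}$, extracts $n+1$ bounded independent lattice points via Minkowski's second theorem, and then solves an $(n+1)\times(n+1)$ linear system (\ref{equ:system_eta_g}) to produce a \emph{specific} linear combination $g$ with $\partial_1(g+\theta)\asymp q^t$ dominating the full gradient uniformly on $U_0$; this construction is the real content and is not captured by your pigeonhole sketch. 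Second, your closing step — arguing that the small-gradient complement is $\mathscr{H}^s$-null by niceness — is both unjustified (niceness gives Lebesgue-measure bounds, not $\mathscr{H}^s$-bounds for $s<d$) and unnecessary: once the resonant sets are defined to be empty when the gradient condition fails (as you also suggest), every point of $\Lambda(\phi)$ is already in $\mathcal{W}_f(\Psi,\theta)$ by the mean value bound $|(g+\theta)(\mathbf{x})|\leq\|\mathbf{a}\|\,\dist(\mathbf{x},R_g)$, and Lemma \ref{lem:ubiquity} directly yields the full-measure conclusion.
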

As mentioned earlier, we use ubiquitous systems to prove Theorem \ref{thm:inhomo_div_main}. To apply the ubiquity lemma, first, we construct a locally ubiquitous system as deems necessary. Recall that $$f=(f_1,\dots,f_n) : U \subseteq F^d \rightarrow F^n$$ is  a nice map satisfying  \ref{II}, \ref{III}, and \ref{IV}. Also suppose that $\theta : U \rightarrow F $ is an analytic map satisfying \ref{VI}. Let
\begin{equation}
  \mathcal{F}_n \defeq\left \{  \mathbf{f} : U \rightarrow F 
  \left|
\begin{array}{lcl}
\mathbf{f}(\mathbf{x})=  a_0 + a_1 f_1(\mathbf{x}) + \dots + a_n f_n(\mathbf{x}), \\
\text{ with }\tilde{\mathbf{a}}=(a_0,a_1,\dots,a_n) \in \Lambda^{n+1} \setminus \{0\} \\
\end{array}\right.\right\}. 
\end{equation}
Now given $\mathbf{f} \in \mathcal{F}_n,$ consider
\begin{equation}
    \widetilde{R_{\mathbf{f}}} \defeq \{ \mathbf{x} \in U : (\mathbf{f}+ \theta)(\mathbf{x})=0\} \,\, \text{and} \,\, H(\mathbf{f}) \defeq \max_{1 \leq i \leq n} |a_i|=\|\mathbf{a}\|,
\end{equation}
where $\mathbf{a}=(a_1,\dots,a_n).$ The following ubiquity statement is the key to proving Theorem \ref{thm:inhomo_div_main}.
\begin{theorem}\label{thm:ubiqui_system}
Let $\mathbf{x}_0 \in U\subseteq F^d$ be open. Suppose that  $f: U \rightarrow F^n$ is nice at $\mathbf{x}_0$ and satisfies \ref{II}, \ref{III} and \ref{IV}. Consider an analytic map $\theta : U \rightarrow F$  satisfying \ref{VI}.  Then there exist
 a neighbourhood $U_0$ of $\mathbf{x}_0$, constants $k_0,k_1 >0$, and a family $\mathcal{R} \defeq(R_{\mathbf{f}})_{\mathbf{f} \in \mathcal{F}_n}$ of sets $R_{\mathbf{f}} \subseteq  \widetilde{R_{\mathbf{f}}} \cap U_0$ such that the system $(\mathcal{R},\beta)$ is locally ubiquitous in $U_0$ relative to $\rho(r)=k_1 r^{-(n+1)}$ with common dimension $\gamma = d-1,$ where $\beta : \mathcal{F}_n \rightarrow \mathbb{R}_+$ is defined by
\begin{equation*}
     \beta(\mathbf{f}) \defeq\beta_{\mathbf{f}}=k_0 H(\mathbf{f})= k_0 \|{\mathbf{a}}\|.
\end{equation*}

\end{theorem}
\begin{proof}
Let $\pi : F^d \rightarrow F^{d-1}$ be the projection map defined by
\begin{equation*}
    \pi(x_1,\dots,x_d)=(x_2,\dots,x_d).
\end{equation*}
Let $ U_0$ be any open subset of $U$ with $\mathbf{x}_0 \in U_0$. For $\mathbf{f} \in \mathcal{F}_n$, let 
\begin{equation}
    \widetilde{V}\defeq\pi(\widetilde{R_{\mathbf{f}}} \cap U_0), \quad V\defeq\bigcup_{\tiny{\begin{array}{rcl}
         B\subseteq \tilde{V} \text{ of }\\
         \text{radius }3\rho(\beta_{\mathbf{f}})
    \end{array}} } \frac{1}{2} B
\end{equation}
and 
\begin{equation}   
R_{\mathbf{f}}\defeq \begin{cases}\displaystyle  \pi^{-1}(V) \cap \widetilde{R}_{\mathbf{f}} & \text{ if } |\partial_1(\mathbf{f}+\theta)(\mathbf{x})| > \lambda \|\nabla(\mathbf{f}+\theta)(\mathbf{x})\|, \, \forall \mathbf{x} \in U_0 \\
     \emptyset            &\text{otherwise} 
\end{cases},\end{equation} 
where $0 < \lambda < 1$ is fixed. \\

We claim that $R_{\mathbf{f}}$ are resonant sets. Note that the intersection conditions can be checked in a similar way as done in \cite{BBV}. We only need to observe that in \cite{BBV} the implicit function theorem for $C^l(U)$ functions in $\mathbb{R}^n$ was used and the implicit function theorem in the function field holds for analytic maps and all our concerned maps are assumed to be analytic. So we can easily check the intersection property in our setup by adapting the proof of  \cite[Proposition 5]{BBV}.  Thus to prove ubiquity we only need to check the covering property (\ref{equ:ubi_cover_pro}).\\

Let $U_0$ be the neighbourhood of $\mathbf{x}_0$ that arises from Definition \ref{def:nice}. Without loss in generality, we will always assume that the ball $U_0$ satisfies
\begin{equation}
    \text{diam}\,\, U_0 \leq \frac{1}{q^2}.
\end{equation}
Since $f$ is nice at $\mathbf{x}_0,$ there exist fixed $0 < \delta,w < 1$ such that for any arbitrary ball $B \subseteq U_0$ 
\begin{equation}
     \limsup_{t \rightarrow \infty} \left|\Phi^f(t,\delta) \cap \frac{1}{2} B \right| \leq w \left|\frac{1}{2}B \right|.
 \end{equation}
 Hence for sufficiently large $t$ we have 
 \begin{equation*}
    \left |\frac{1}{2}B \setminus \Phi^f(t,\delta)\right| \geq (1-w) \left|\frac{1}{2}B \right|=c(1-w)|B|,
 \end{equation*}
 for some constant $c>0.$
 
\noindent Therefore it is enough to show that 
 \begin{equation}\label{equ:phi_sub_del_R}
     \frac{1}{2}B \setminus \Phi^f(t,\delta) \subseteq \bigcup_{\mathbf{f} \in \mathcal{F}_n,\,\, \beta_{\mathbf{f}} \leq q^t} \Delta (R_{\mathbf{f}},\rho(q^t)) \cap B 
 \end{equation}
Let $\mathbf{x} \in \frac{1}{2}B \setminus \Phi^f(t,\delta)$ and consider the following system  
 \begin{equation}\label{equ:convex_set}
     \left \{ \begin{array}{lcl}\displaystyle  |a_0 + a_1 f_1(\mathbf{x})+ \dots + a_n f_n(\mathbf{x}) | < q^{-nt} \\
        |a_i| \leq q^t, \quad i=1,\dots,n.          .
                      
                        \end{array} \right.
 \end{equation}
 The set of $(a_0,a_1,\dots,a_n) \in F^{n+1}$ satisfying (\ref{equ:convex_set}) give rise to a convex body $\mathcal{C}$ in $F^{n+1},$ which is symmetric about the origin. Let $\mu_i$ be the $i$-th successive minima of $\mathcal C$ for $i=1,\dots,n+1.$ Now by using Minkowski's convex body theorem over function fields (i.e., Theorem \ref{thm:mink_conv_body}), we have
 \begin{equation*}
     \begin{array}{lcl}
     \displaystyle  \quad \quad \mu_1 \mu_2 \dots \mu_{n+1} |\mathcal C|=1 \\
        \displaystyle   \implies  \mu_{n+1}=\frac{1}{\mu_1 \mu_2 \dots \mu_{n}} < \delta^{-n},
         \end{array}
 \end{equation*}\\
 since $   \mu_1  > \delta $ (as $\mathbf{x} \notin \Phi^f(t,\delta)$), $|\mathcal C|=1$ and $\mu_1 \leq \dots \leq \mu_{n+1}).$
 
 Before going further we choose $t' \in \mathbb{N}$ such that $q^{-t'} \leq \delta < q^{-(t'-1)}.$  By the definition of $\mu_{n+1},$ there exists $n+1$ linearly independent vectors $\mathbf{a}_j =(a_{j,0},\dots,a_{j,n}) \in \Lambda^{n+1}$ ($0 \leq j \leq n$) such that the functions $g_j$ given by 
 \begin{equation*}
     g_j(\mathbf{x})= a_{j,0}+a_{j,1}f_1(\mathbf{x})+ \dots + a_{j,n}f_n(\mathbf{x})
 \end{equation*}
 satisfy
 \begin{equation}\label{equ:g_j_x}
	\left \{\begin{array}{lcl}|g_j(\mathbf{x})| \leq 
	\delta^{-n} q^{-nt}\\[2ex]  |a_{j,i}| \leq  \delta^{-n} q^{t} \quad \text{for} \,\, i=1,\dots,n.
	\end{array}\right.\,
\end{equation}

\noindent Now consider the following system of linear equations
\begin{equation}\label{equ:system_eta_g}
    \left \{\begin{array}{lcl}
         \eta_0 g_0(\mathbf{x}) +  \eta_1 g_1(\mathbf{x}) + \dots +  \eta_n g_n(\mathbf{x})+\theta(\mathbf{x})=0  \\
         \eta_0 \partial_1g_0(\mathbf{x}) +  \eta_1 \partial_1g_1(\mathbf{x}) + \dots +  \eta_n \partial_1g_n(\mathbf{x})+\partial_1\theta(\mathbf{x})=X^{nt'+t+1} \\
          \eta_0 a_{0,j} +   \dots +  \eta_n a_{n,j}=0 \quad (2 \leq j \leq n).
    \end{array}\right.
\end{equation}
Since $f_1(\mathbf{x})=x_1,$ the determinant of this system is $\text{det}(a_{i,j}) \neq 0.$ Therefore there exists a unique solution to the system, say $(\eta_0,\dots,\eta_n) \in F^{n+1}.$ Now let $r_i=[\eta_i]=$ the polynomial part of the Laurent series $\eta_i,$ for $i=0,\dots,n.$ Then we have 
\begin{equation}\label{equ:r_i_eta_i}
    |r_i - \eta_i| < 1 \quad (0 \leq i \leq n)
\end{equation}
Let 
\begin{equation}\label{equ:g_g_i_f_i}
    g(\mathbf{x}) :  = r_0 g_0(\mathbf{x})+ \dots + r_n g_n(\mathbf{x}) =a_0+a_1f_1(\mathbf{x})+\dots + a_n f_n(\mathbf{x}),
\end{equation}
where 
\begin{equation}\label{equ:a_i}
    a_i \defeq r_0a_{0,i}+ \dots + r_n a_{n,i}, \quad \text{for} \,\, i=0,\dots,n.
\end{equation}

We claim the following:
\begin{enumerate}[label=(B.{{\arabic*}})]
    \item\label{item:B1} $g$ defined above satisfies:
    $|\partial_1(g+\theta)(\mathbf{x})| > \lambda \|\nabla(g +\theta)(\mathbf{x})\|, \,\, \forall \,\,  \mathbf{x} \in U_0,$ where $0 < \lambda < 1.$
    \item\label{item:B2} $g$ also satisfies the height condition $\beta_g \leq q^t.$
    \item\label{item:B3} $\mathbf{x} \in \Delta(R_g,\rho(q^t)).$
\end{enumerate}
If we can prove these claims, (\ref{equ:phi_sub_del_R}) follows and we are done.\\

\noindent $\bullet \,\, $  \textbf{Checking \ref{item:B2}}: Here first we establish a few bounds, which will be useful in checking these three claims. In view of (\ref{equ:a_i}), (\ref{equ:system_eta_g}), (\ref{equ:r_i_eta_i}) and (\ref{equ:g_j_x}) we get that 

\begin{equation}\label{equ:bound_a_j_geq_2}
    |a_j| \leq \delta^{-n} q^t \leq q^{nt'+t} \quad (2 \leq j \leq n)
\end{equation}
and 
\begin{equation}\label{equ:bound_g_theta}
    |g(\mathbf{x})+\theta(\mathbf{x})|=|(r_0 - \eta_0)g_0(\mathbf{x})+\dots +(r_n - \eta_n)g_n(\mathbf{x})| \leq \delta^{-n} q^{-nt} \leq q^{nt'-nt}
\end{equation}
Using (\ref{equ:g_j_x}) and \ref{IV}, we get
\begin{equation}\label{equ:bound_partial_g_j}
    |\partial_1 g_j(\mathbf{x})| \leq \delta^{-n}q^t \leq q^{nt'+t} \quad \text{for} \,\, j=0,\dots,n.
\end{equation}

\noindent Now using (\ref{equ:system_eta_g}),  (\ref{equ:g_g_i_f_i}), we get 
\begin{equation}
    \partial_1(g + \theta)(\mathbf{x})=(r_0 - \eta_0)\partial_1 g_0(\mathbf{x}) + \dots + (r_n - \eta_n)\partial_1 g_n(\mathbf{x}) + X^{nt'+t+1}
\end{equation}
Therefore using  (\ref{equ:bound_partial_g_j}) and (\ref{equ:r_i_eta_i}), we get
\begin{equation}\label{equ:lower_bound_d_g_theta}
    | \partial_1(g + \theta)(\mathbf{x})| \geq q^{nt'+t+1} - q^{nt'+t}=q^{nt'+t}(q-1) \geq q^{nt'+t},
\end{equation}
since  $q \geq 2$. Also using (\ref{equ:r_i_eta_i}) and (\ref{equ:bound_partial_g_j}) we have
\begin{equation}\label{equ:upper_bound_d_g_theta}
    | \partial_1(g + \theta)(\mathbf{x})|  \leq q^{nt'+t+1}
\end{equation}
Also since $f_1(\mathbf{x})=x_1,$ we can write
\begin{equation}\label{equ:a_1_partial_1_g_theta}
    a_1=\partial_1(g+\theta)(\mathbf{x}) - \partial_1 \theta(\mathbf{x}) - \sum_{j=2}^{n} a_j \partial_1 f_j(\mathbf{x})
\end{equation}
and 
\begin{equation}
    a_0=(g+\theta)(\mathbf{x}) -  \theta(\mathbf{x}) - \sum_{j=1}^{n} a_j f_j(\mathbf{x})
\end{equation}
In view of these two equations, (\ref{equ:bound_a_j_geq_2}), (\ref{equ:upper_bound_d_g_theta}), (\ref{equ:bound_g_theta}), \ref{IV} and \ref{VI}, we get 
\begin{equation}\label{equ:bound_a_1}
    |a_1| \leq q^{nt'+t+1}
\end{equation}
and
\begin{equation}\label{equ:bound_a_0}
    |a_0| \leq q^{nt'+t+1}.
\end{equation}
Now in view of (\ref{equ:bound_a_j_geq_2}), (\ref{equ:bound_a_1}) and  (\ref{equ:lower_bound_d_g_theta}) we get 
\begin{equation}\label{equ:bound_beta_g}
    k_0^* q^t < \beta_g : = q^{-(nt'+1)} \|(a_1,\dots,a_n)\| \leq q^t,
\end{equation}
for some constant $0 <k_0^* <1.$ We take $k_0=q^{-(nt'+1)}.$

\noindent $\bullet \,\, $  \textbf{Checking \ref{item:B1}}:
In view of Taylor's formula, for any $\mathbf{y} \in U_0$ we have
\begin{eqnarray}
 \partial_1 (g + \theta)(\mathbf{x}) & = & \partial_1 (g + \theta)(\mathbf{y}) + \displaystyle \sum_{j=1}^{d} \bar{\Phi}_{j}( \partial_1 (g + \theta))( \cdot )(x_{j}-y_{j}) \nonumber \\[2ex]
 & = &  \partial_1 (g + \theta)(\mathbf{y}) + \displaystyle \sum_{j=1}^{d} \bar{\Phi}_{j} \circ \bar{\Phi}_{1} (g + \theta)(\cdot)(x_{j}-y_{j}), \nonumber 
\end{eqnarray}
where the argument of $\bar{\Phi}_{j}( \partial_1 (g + \theta))$ is some of the components of $\mathbf x$ and $\mathbf y.$

\noindent Now using \ref{IV}, \ref{VI}, (\ref{equ:lower_bound_d_g_theta}), (\ref{equ:bound_a_j_geq_2}), (\ref{equ:bound_a_1}) and the above equation, we get that
\begin{eqnarray}\label{equ:d_g_theta_y_lower_bound}
 |\partial_1 (g + \theta)(\mathbf{y})| & \geq & |\partial_1 (g + \theta)(\mathbf{x})| - \left|  \displaystyle \sum_{j=1}^{d} \bar{\Phi}_{j} \circ \bar{\Phi}_{1} (g + \theta)(\cdot)(x_{j}-y_{j})\right| \nonumber \\[2ex] & \geq & q^{nt'+t} - \text{diam}\,\, U_0 \,\, q^{nt'+t+1} \nonumber  \\[2ex] & \geq & q^{nt'+t}\left(1-\frac{1}{q}\right)  \quad \text{for any} \,\, \mathbf{y} \in U_0,
\end{eqnarray}
since $\text{diam} \,\, U_0 \leq \frac{1}{q^2}$ and $t,t' \in \mathbb N.$ Also using (\ref{equ:bound_a_j_geq_2}), (\ref{equ:bound_a_1}), \ref{IV} and \ref{VI} we get that 
\begin{equation}\label{equ:gradient_g_theta_y_bound}
    \|\nabla(g+\theta)(\mathbf{y})\| \leq q^{nt'+t+1} \quad \text{for any} \,\, \mathbf{y} \in U_0.
\end{equation}
In view of (\ref{equ:d_g_theta_y_lower_bound}) and (\ref{equ:gradient_g_theta_y_bound}), we get 
\begin{equation*}
    |\partial_1(g+\theta)(\mathbf{y})| > \lambda \|\nabla(g +\theta)(\mathbf{y})\|\,\,  \,\, \forall \,\,  \mathbf{y} \in U_0,
\end{equation*}
with $0 < \lambda=\frac{1}{q}-\frac{1}{q^2} < 1.$\\

\noindent $\bullet \,\, $  \textbf{Checking \ref{item:B3}}: Here we will show that $\mathbf{x} \in \Delta(R_g,\rho(q^t)).$ Let $\mathbf{x}=(x_1,x_2,\dots,x_d)$ and consider the set
\begin{equation*}
    I= \{\eta \in F : (x_1 + \eta,x_2,\dots,x_d) \in B \}.
\end{equation*}
Now consider the function $h: I \rightarrow F$ defined by
\begin{equation*}
    h(\eta) \defeq (g+\theta)(x_1 + \eta,x_2,\dots,x_d), \quad \text{for}\,\, \eta \in I.
\end{equation*}
Using (\ref{equ:bound_g_theta}) and (\ref{equ:lower_bound_d_g_theta}), we have
\begin{equation}\label{equ:bound_h_h'}
    |h(0)| \leq q^{nt'-nt} \quad \text{and} \quad |h'(0)| \geq q^{nt'+t}(q-1)
\end{equation}
Now applying the local invertibilty theorem (Theorem $27.5$, \cite{WHS}), there exists $\tilde{\eta} \in F$ such that 
\begin{equation}\label{equ:bound_eta_tilde}
    h(\tilde{\eta})=0  \quad \text{and} \quad |\tilde{\eta}| < \frac{q^{nt'-nt} }{q^{nt'+t}(q-1)}=\frac{1}{q^{nt+t}(q-1)}
\end{equation}
Then $\mathbf{x}_{\tilde{\eta}}=(x_1 + \tilde{\eta},x_2,\dots,x_d) \in B$ satisfies
\begin{equation}\label{equ:bound_x_x_eta_tilde}
    (g+\theta)(\mathbf{x}_{\tilde{\eta}})=0 \quad \text{and} \quad \|\mathbf{x} - \mathbf{x}_{\tilde{\eta}}\| \leq \frac{1}{q^{nt+t}(q-1)}.
\end{equation}
By the Mean value theorem, we get 
\begin{equation}
    |(g+\theta)(\mathbf{y})| \ll q^{-nt} \quad \text{for any} \,\, \|\mathbf{y}-\mathbf{x}_{\tilde{\eta}}\| \ll q^{-nt}
\end{equation}
We define $k_1 \defeq q^{-nt'}$ and recall that $\rho(r)=k_1 r^{-(n+1)}.$ Then by (\ref{equ:bound_beta_g}) we have $\rho(\beta_g) \ll \rho(q^t),$ since $\rho   $ is monotone decreasing. Now by using the above argument, we get for sufficiently large $t$ the ball of radius $3\rho(\beta_g)$ containing the point $\pi(\mathbf{x}_{\tilde{\eta}})$ is contained in $\widetilde{V}.$ This gives $\mathbf{x}_{\tilde{\eta}} \in R_g.$ And this implies $\mathbf{x} \in \Delta(R_g,\rho(q^t))$ for some $g \in \mathcal{F}_n$ such that $\beta_g \leq q^t,$ in view of (\ref{equ:bound_x_x_eta_tilde}). This completes the proof of Theorem \ref{thm:ubiqui_system}.
\end{proof}

\subsection{The proof of Theorem \ref{thm:inhomo_div_main}} Here we proof the divergence case of inhomogeneous Khitchine-Groshev type theorem by proving Theorem \ref{thm:inhomo_div_main}, using Theorem \ref{thm:ubiqui_system} and the ubiquity lemma.

Fix $\mathbf{x}_0 \in U$ and let $U_0$ be  as in Theorem \ref{thm:ubiqui_system}. Now we need to show that
\begin{equation*}
     \mathscr{H}^s(\mathcal W_f(\Psi,\theta) \cap U_0)=\mathscr{H}^s(U_0) \,\,  \,\,  \,\, if \,\, \,\, \,\, \displaystyle \sum_{\mathbf{a} \in \Lambda^n  \setminus \{0\} } \|\mathbf{a}\| \left(\frac{\Psi (\mathbf{a})}{\|\mathbf{a}\|}\right)^{s+1-d} = \infty.
\end{equation*}
We consider $\phi(r)\defeq k_0 r^{-1}\psi(k_0^{-1}r).$ First we show that
\begin{equation}
    \Lambda(\phi) \subseteq \mathcal W_f(\Psi,\theta).
\end{equation}
For that, let $\mathbf{x} \in \Lambda(\phi).$ Then there exists infinitely many $g=a_0 + a_1 f_1(\mathbf{x}) + \dots + a_n f_n(\mathbf{x}) \in \mathcal{F}_n$ such that $\text{dist}(\mathbf{x},R_g) < \phi(\beta_g).$ Now for such $g \in \mathcal{F}_n$ there exists $\mathbf{z} \in U_0 $ such that $(g+\theta)(\mathbf{z})=0$ and $\|\mathbf{x}-\mathbf{z}\| < \phi(\beta_g).$ By the mean value theorem, we get
\begin{eqnarray*}
 (g+\theta)(\mathbf{x})=(g+\theta)(\mathbf{z}) + \nabla (g+\theta)(\mathbf{z}) \cdot (\mathbf{x}-\mathbf{z}) \\[2ex]
 + \displaystyle \sum_{i,j} \bar {\Phi}_{ij}(g+\theta)(\mathbf{t}_{i,j})(x_i -z_i)(x_j-z_j),
\end{eqnarray*}
where $\mathbf{t}_{i,j}$ comes from the components of $\mathbf{x}$ and $\mathbf{z}.$ Now in view of the above equation, we have
\begin{equation}
    |(g+\theta)(\mathbf{x})| \leq \|\mathbf{a}\| \|\mathbf{x}-\mathbf{z}\|< \|\mathbf{a}\| \phi(\beta_g)= \|\mathbf{a}\| \phi(k_0 \|\mathbf{a}\|)=\psi(\|\mathbf{a}\|)=\Psi(\mathbf{a}),
\end{equation}
where $\mathbf{a}=(a_1,\dots,a_n).$ Hence $\mathbf{x} \in \mathcal W_f(\Psi,\theta) $ and thus we get $\Lambda(\phi) \subseteq \mathcal W_f(\Psi,\theta).$ Now to complete the proof we just need to check the divergent condition, i.e., 
\begin{equation*}
    \sum_{t=1}^{\infty} \frac{{\phi(q^t)}^{s-\gamma}} {{\rho(q^t)}^{d-\gamma}}  = \infty, \quad \text{where} \,\, \gamma =d-1.
\end{equation*}
Note that 
 \begin{eqnarray*}
 \sum_{t=1}^{\infty} \frac{{ \phi(q^t)}^{s-d+1}} {{\rho(q^t)}}  \asymp  \sum_{t=1}^{\infty} \frac{(q^{-t}{\psi(k_0^{-1}q^t)})^{s-d+1}} {{\rho(q^t)}} & \asymp & \sum_{t=1}^{\infty} (q^{-t}{\psi(k_0^{-1}q^t)})^{s-d+1} q^{t(n+1)} \\[2ex] & \gg & \sum_{t=1}^{\infty} \sum_{\|\mathbf{a}\|= k_0^{-1}q^{t}} \|\mathbf{a}\| \left(\frac{\psi(k_0^{-1}q^t)}{\|\mathbf{a}\|}\right)^{s-d+1}  \\[2ex]
 & \gg & \sum_{t=1}^{\infty} \sum_{ \|\mathbf{a}\|= k_0^{-1}q^{t}} \|\mathbf{a}\| \left(\frac{\psi(\|\mathbf{a}\|)}{\|\mathbf{a}\|}\right)^{s-d+1} \\[2ex]
 & \asymp & \sum_{\mathbf{a} \in \Lambda^n  \setminus \{0\} } \|\mathbf{a}\|  \left(\frac{\Psi (\mathbf{a})}{\|\mathbf{a}\|}\right)^{s+1-d} 
 = \infty,
\end{eqnarray*}
since $\psi$ is an approximation function, $k_0=q^{-(nt'+1)},$ and the number of $\mathbf{a} \in \Lambda^n$ such that $\|\mathbf{a}\|=k_0^{-1}q^t$ is comparable to $q^{nt}$. This completes the proof of the theorem.

\end{document}